\newcolumntype{L}{>{\displaystyle}l}
\newcolumntype{C}{>{\displaystyle}c}
\newcolumntype{R}{>{\displaystyle}r}
\newcommand{\R}{\ensuremath{\mathbb{R}}}
\newcommand{\Z}{\ensuremath{\mathbb{Z}}}
\newcommand{\CO}{\ensuremath{\mathcal{O}}}
\newcommand{\ov}{\overline}
\newcommand{\la}{\lambda}
\newcommand{\bx}{{\bf x}}
\newcommand{\bz}{{\bf z}}
\newcommand{\bg}{{\bf g}}
\newcommand{\bu}{{\bf u}}
\newcommand{\f}{{\bf f}}
\newcommand{\s}{\ensuremath{\mathbb{S}}}
\newcommand{\x}{\mathbf{x}}
\def\p{\partial}
\def\e{\varepsilon}
\newtheorem {theorem} {Theorem}
\newtheorem {definition} {Definition}
\newtheorem {proposition}{Proposition}
\newtheorem {corollary}{Corollary}
\newtheorem {lemma}{Lemma}
\newtheorem {example} {Example}
\newtheorem {remark}{Remark}
\newtheorem {mtheorem} {Theorem}
\newcommand\blfootnote[1]{%
	\begingroup
	\renewcommand\thefootnote{}\footnote{#1}%
	\addtocounter{footnote}{-1}%
	\endgroup
}
\begin{document}
\renewcommand{\arraystretch}{1.5}

\title[Invariant tori via higher order averaging method]
{Invariant tori via higher order averaging method:\\ existence, regularity, convergence, stability, and dynamics}

\author[D.D. Novaes and P.C.C.R. Pereira]
{Douglas D. Novaes$^1$  and Pedro C.C.R. Pereira$^2$}

\address{Departamento de Matem\'{a}tica - Instituto de Matemática, Estatística e Computação Científica (IMECC) - Universidade Estadual de Campinas (UNICAMP), Rua S\'{e}rgio Buarque de Holanda, 651, Cidade
Universit\'{a}ria Zeferino Vaz, 13083--859, Campinas, SP, Brazil}
\email{ddnovaes@unicamp.br$^1$}
\email{pedro.pereira@ime.unicamp.br$^2$}

\keywords{invariant tori, averaging theory, higher order analysis}

\subjclass[2010]{34C23, 34C29, 34C45}

\begin{abstract}
Important information about the dynamical structure of a differential system can be revealed  by looking into its invariant compact manifolds, such as equilibria, periodic orbits, and invariant tori. This knowledge is significantly increased if asymptotic properties of the trajectories nearby such invariant manifolds can be determined. In this paper, we present a result providing sufficient conditions for the existence of invariant tori in perturbative differential systems. The regularity, convergence, and stability of such tori as well as the dynamics defined on them are also investigated. The conditions are given in terms of their so-called higher order averaged equations. This result is an extension to a wider class of differential systems of theorems due to Krylov, Bogoliubov, Mitropolsky, and Hale.
\end{abstract}
\blfootnote{
	DDN is partially supported by S\~{a}o Paulo Research Foundation (FAPESP) grants 2022/09633-5, 2019/10269-3, and 2018/13481-0, and by Conselho Nacional de Desenvolvimento Cient\'{i}fico e Tecnol\'{o}gico (CNPq) grants 438975/2018-9 and 309110/2021-1. PCCRP is supported by S\~{a}o Paulo Research Foundation (FAPESP) grant 2020/14232-4. \\
\rule{60pt}{0.4pt}}
\maketitle

\section{Introduction and statement of the main result} \label{sec:intro}

The \textit{averaging method} has been employed by Krylov, Bogoliubov, and Mitropolski \cite{BM, BK} to study the existence of invariant tori in the extended phase space of $T$-periodic non-autonomous perturbative differential equations of the kind $\dot\bx=\e F_1(t,\bx).$ Those results were generalized by Hale in \cite{haleinvariant} and \cite{haleoscillations} and assert that the existence of invariant tori is associated to the existence of limit cycles of the so-called averaged equation,
\[
\dot \bx=\dfrac{1}{T}\int_0^T F_1(t,\bx)dt.
\]

In this paper, we are concerned with a wider class of  $T$-periodic non-autonomous perturbative differential equations of the following kind:
\begin{align} \label{eq:e1}
\dot \bx = \sum_{i=1}^N \varepsilon^i F_i(t, \bx) + \varepsilon^{N+1} {\Tilde{F}}(t, \bx,\varepsilon),\quad (t,\bx,\e)\in \R \times D \times [0,\varepsilon_0],
\end{align}
where $D$ is an open bounded subset of $\R^n,$  $\varepsilon_0>0,$ and the functions $F_i:\R \times D \rightarrow \R^n,$ $i\in\{1,\ldots,N\}$, and  $\Tilde{F}:\R \times D \times [0,\varepsilon_0]\to\R^n$ are of class $C^r$, $r\geq 2$, and $T$-periodic in the variable $t$. Our goal is to extend the mentioned results of Krylov, Bogoliubov, Mitropolsky, and Hale concerning the existence of invariant tori to the differential equation \eqref{eq:e1}. More specifically, we aim to provide sufficient conditions for the existence of invariant tori in the extended phase space of \eqref{eq:e1} which, due to the periodicity in the variable $t$, can be seen as a vector field defined on a cylinder:
\begin{equation} \label{eq:systemextended}
			\left\{ \begin{array}{@{}l@{}}
			\tau'=1,\\
				\bx' = \displaystyle \sum_{i=1}^N \varepsilon^i F_i(\tau, \bx) + \varepsilon^{N+1} {\Tilde{F}}(\tau, \bx,\varepsilon),
			\end{array} \right.\quad (\tau,\x)\in\s^1\times D,
		\end{equation}
		where $\s^1=\R/(T\Z)$.
In addition, results concerning the regularity, convergence, and stability of such tori as well as information about the dynamics defined on them will also be presented.

\subsection{Introduction to the averaging theory}\label{sec:avm} Some notions from the averaging theory will appear in the statement of our main result, Theorem \ref{maintheorem}. Thus, in order to state it, we must  provide a brief introduction to the averaging method, with special attention to the concept of higher order averaged functions.

The \textit{averaging method} or \textit{averaging theory} stemmed from the works of Clairaut, Lagrange, and Laplace regarding perturbartions of differential equations (see \cite[Appendix A]{SVM}), even though its formalization was only established much later, by Fatou, Krylov, Bogoliubov, and Mitropolsky (see \cite{BM,Bo,fatou,BK}). It is particularly useful in the study of nonlinear oscillating systems which are affected by small perturbations, by providing asymptotic estimates for solutions of non-autonomous differential equations given in the standard form \eqref{eq:e1}.

The estimates provided by the averaging method depend on the \textit{averaged functions}, $\bg_i:D\rightarrow\R^n$ for $i\in\{1,\ldots,N\},$ which appear as solutions of homological equations when transforming system \eqref{eq:e1} according to the following result.
\begin{theorem}[{\cite[Lemma 2.9.1]{SVM}}]\label{thm:av1}
	There exists a $T$-periodic near-identity transformation of class $C^r$
	\begin{equation}\label{avtrans}
	\bx=U(t,\bz ,\e)=\bz+\sum_{i=1}^N \e^i\, \bu_i(t,\bz ),
	\end{equation}
	satisfying $U(0,\bz,\e)=\bz$, such that the differential equation \eqref{eq:e1} is transformed into
	\begin{equation*}
	\dot \bz=\sum_{i=1}^N\e^i\bg_i(\bz)+\e^{N+1} r_N(t,\bz,\e).
	\end{equation*}
\end{theorem}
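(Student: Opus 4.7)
\textbf{Plan for the proof of Theorem \ref{thm:av1}.}

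The plan is to determine the coefficient functions $\bu_i(t,\bz)$ and the averaged functions $\bg_i(\bz)$ order by order in $\e$, by imposing that the ansatz \eqref{avtrans} conjugates \eqref{eq:e1} to the desired autonomous-plus-remainder form. Setting $\bx=U(t,\bz,\e)$ and differentiating, the chain rule yields
\[
\partial_t U(t,\bz,\e)+\partial_\bz U(t,\bz,\e)\cdot \dot\bz=\sum_{i=1}^N\e^i F_i(t,U(t,\bz,\e))+\e^{N+1}\tilde F(t,U(t,\bz,\e),\e).
\]
If we postulate $\dot\bz=\sum_{i=1}^N\e^i\bg_i(\bz)+\e^{N+1}r_N(t,\bz,\e)$, then Taylor-expanding each $F_i(t,U(t,\bz,\e))$ about $\bz$ in powers of $\e$, collecting powers of $\e$, and matching coefficients up to order $N$ produces, at each order $i$, a \emph{homological equation} of the form
\[
\partial_t \bu_i(t,\bz)=H_i(t,\bz)-\bg_i(\bz),
\]
where $H_i$ depends only on $F_1,\ldots,F_i$ and on the previously determined $\bu_1,\ldots,\bu_{i-1}$ and $\bg_1,\ldots,\bg_{i-1}$. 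In particular $H_1(t,\bz)=F_1(t,\bz)$.

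The key step is to solve each homological equation so that $\bu_i$ is $T$-periodic in $t$ with $\bu_i(0,\bz)=0$. Periodicity forces the choice
\[
\bg_i(\bz)=\frac{1}{T}\int_0^T H_i(s,\bz)\,ds,
\]
after which one defines
\[
\bu_i(t,\bz)=\int_0^t\bigl[H_i(s,\bz)-\bg_i(\bz)\bigr]ds.
\]
This construction produces $C^r$ functions since $F_i$ and $\tilde F$ are $C^r$ and the operations used (differentiation in $\bz$, integration in $t$, and composition) preserve this regularity. The condition $\bu_i(0,\bz)=0$ guarantees $U(0,\bz,\e)=\bz$. After determining all $\bu_i$ and $\bg_i$ for $i=1,\ldots,N$, the remainder $r_N(t,\bz,\e)$ is defined by solving algebraically for $\dot\bz$ from the conjugation identity: since $\partial_\bz U(t,\bz,\e)=\Id+\CO(\e)$ is invertible for $\e$ small uniformly on $[0,T]\times K$ for any compact $K\subset D$, one obtains
\[
\dot\bz=\bigl(\partial_\bz U\bigr)^{-1}\!\left[\sum_{i=1}^N\e^i F_i(t,U)+\e^{N+1}\tilde F(t,U,\e)-\partial_t U\right],
\]
and by construction the first $N$ powers of $\e$ on the right-hand side equal $\sum_{i=1}^N\e^i\bg_i(\bz)$, so everything else is $\e^{N+1}$ times a $C^r$ remainder $r_N(t,\bz,\e)$ that is $T$-periodic in $t$.

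The main obstacle, and the only nontrivial bookkeeping in the argument, is to verify inductively that at each order the right-hand side $H_i$ truly depends only on data of lower order, so the recursion is well defined; this follows from the fact that in the Taylor expansion of $F_j(t,U)$ the contribution to the coefficient of $\e^i$ involves only $\bu_k$ with $k\leq i-j$, and the term $\partial_\bz U\cdot \dot\bz$ contributes $\bg_i$ at order $\e^i$ plus terms involving previously determined quantities. A secondary technical point is choosing $\e_0>0$ small enough that $\partial_\bz U$ remains invertible on $\s^1\times\overline{D'}\times[0,\e_0]$ for a slightly shrunken $D'\subset D$ (so that $U$ maps into $D$), which is immediate from $\partial_\bz U=\Id+\CO(\e)$ and continuity.
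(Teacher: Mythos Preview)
The paper does not prove this theorem; it is quoted from \cite[Lemma~2.9.1]{SVM} and used as a black box, so there is no in-paper argument to compare against. Your outline via homological equations---choosing $\bg_i$ as the time-average of $H_i$ to force $T$-periodicity of $\bu_i$, then integrating with the stroboscopic initial condition $\bu_i(0,\bz)=0$, and finally reading off $r_N$ from the invertibility of $\partial_\bz U$---is exactly the standard proof one finds in that reference and is correct.

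One small caveat: your sentence ``the operations used (differentiation in $\bz$, integration in $t$, and composition) preserve this regularity'' is not literally true, since differentiation in $\bz$ costs a derivative. At order $i$ the function $H_i$ involves up to $i-1$ derivatives in $\bz$ of the $F_j$'s, so $\bu_i$ and $\bg_i$ are in general only $C^{r-i+1}$, and the remainder $r_N$ is $C^{r-N}$. The paper itself acknowledges this loss later (see the sentence following \eqref{eq:trans1}, where $\bg_\ell$ is $C^{r-\ell+1}$ and $r_\ell$ is $C^{r-\ell}$). This does not affect the logic of your argument, only the precise regularity bookkeeping.
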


The condition $U(0,\bz,\e)=\bz$, called \textit{stroboscopic condition}, ensures that the functions $\bg_i$ are uniquely determined. In that case, $\bg_i$ is named the {\it averaged function of order $i$}. One can easily verify that $\bg_1$ is, indeed, the time-average of $F_1(t,\bx)$, that is,
\begin{equation} \label{eq:formulag1}
    \bg_1(\bz) = \frac{1}{T} \int_0^T F_1(s,\bz) \, ds.
\end{equation}

In general terms, the averaging theory guarantees that, for time $\CO(1/\e)$ and $\e$ small, any solution of (\ref{eq:e1}) remains $\e^N$-close to the solution of the \textit{truncated averaged equation}
\[
\dot \bz = \sum_{i=1}^N \varepsilon^i \bg_i(\bz),
\]
with the same initial conditions.

In addition to the aforesaid quantitative estimates, the averaging theory has found great success when applied to investigate invariant manifolds; for instance, to guarantee the existence of invariant tori, as mentioned in the introduction of this work, \cite{BM,cannov20,haleinvariant,haleoscillations}. It has also been successfully applied to the study of simpler compact invariant manifolds, such as periodic solutions (see, for example, \cite{llibreaveraging,haleordinary,LliNovTei2014,verhulst,Novaes21a,NovaesSilva}). 

Recently, the paper \cite{Novaes21b} provided a general recursive formula for the higher order averaged functions in terms of Melnikov functions.
Accordingly, define the {\it Melnikov function of order $i$}, $\f_i,$ for $i\in\{0,\ldots,N\},$ by
\begin{equation}\label{avfunc}
\f_0(\bz)=0\quad\text{and}\quad \f_i(\bz)=\dfrac{y_i(T,\bz)}{i!},
\end{equation}
where
\begin{equation}\label{yi}
\begin{aligned}
y_1(t,\bz)=& \int_0^tF_1(s,\bz)\,ds\,\, \text{ and }\vspace{0.3cm}\\
y_i(t,\bz)=& \int_0^t\bigg(i!F_i(s,\bz)+\sum_{j=1}^{i-1}\sum_{m=1}^j\dfrac{i!}{j!}\p_{\bx}^m F_{i-j} (s,\bz)B_{j,m}\big(y_1,\ldots,y_{j-m+1}\big)(s,\bz)\bigg)ds,
\end{aligned}
\end{equation}
for $i\in\{2,\ldots,N\}.$ In the formulae above, for  $p$ and $q$ positive integers, $B_{p,q}$ denotes the  {\it partial Bell polynomials} (see, for instance, \cite{comtet}). Roughly speaking, the Melnikov functions determine the $N$-jet in $\e$ of the time-$T$-map of \eqref{eq:e1}, that is, $\varphi(T,\bz,\e)=\bz+\sum_{i=1}^N \e^i \f_i(\bz)+\CO(\e^{N+1})$, where $\varphi(t,\bz,\e)$ corresponds to the solution of \eqref{eq:e1} with initial  condition $\varphi(0,\bz,\e)=\bz$ (see \cite{LliNovTei2014,N17}).

In particular, (\ref{eq:formulag1}) and (\ref{yi}) ensure that $\f_1(\bz)=T\bg_1(\bz).$ The next result states that the same holds for higher order averaged functions provided that some conditions are satisfied.

\begin{proposition}[{\cite[Corollary A]{Novaes21b}}]\label{prop1}
Let $\ell\in\{2,\ldots,N\}$. If either $\f_1=\cdots=\f_{\ell-1}=0$ or $\bg_1=\cdots=\bg_{\ell-1}=0,$ then $\f_i=T\,\bg_i$ for $i\in\{1,\ldots,\ell\}.$
\end{proposition}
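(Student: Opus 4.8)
The plan is to exploit the two characterizations of the averaged functions simultaneously: the $\bg_i$ arise as the Taylor coefficients in $\e$ of the vector field obtained after the near-identity transformation of Theorem~\ref{thm:av1}, while the $\f_i$ (equivalently $y_i/i!$) encode the $N$-jet in $\e$ of the time-$T$ map $\varphi(T,\bz,\e)=\bz+\sum_{i=1}^N\e^i\f_i(\bz)+\CO(\e^{N+1})$. The link between the two is that the transformed time-$T$ map, call it $\psi(T,\bz,\e)$, is conjugate to $\varphi(T,\bz,\e)$ by the stroboscopic transformation $U$, which by the condition $U(0,\bz,\e)=\bz$ satisfies $U(0,\cdot,\e)=\Id$ and (by $T$-periodicity) $U(T,\cdot,\e)=\Id$ as well; hence $\psi(T,\bz,\e)=\varphi(T,\bz,\e)$, so the $N$-jet in $\e$ of the time-$T$ map is a conjugacy invariant and the Melnikov functions of the original and transformed systems coincide.

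First I would set up an induction on $\ell$. The base case $\ell=2$ is essentially \eqref{eq:formulag1}--\eqref{yi} together with the already-noted identity $\f_1=T\bg_1$. For the inductive step, assume the hypothesis forces $\f_i=T\bg_i$ for $i\le \ell-1$; in particular, whichever of the two hypotheses holds, both $\f_1=\cdots=\f_{\ell-1}=0$ and $\bg_1=\cdots=\bg_{\ell-1}=0$ hold. Then the transformed equation reads $\dot\bz=\e^\ell\bg_\ell(\bz)+\CO(\e^{\ell+1})$, so its flow satisfies $\psi(t,\bz,\e)=\bz+\e^\ell t\,\bg_\ell(\bz)+\CO(\e^{\ell+1})$ by straightforward integration of the variational expansion in $\e$ (all lower-order terms vanish). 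Evaluating at $t=T$ gives $\psi(T,\bz,\e)=\bz+\e^\ell T\,\bg_\ell(\bz)+\CO(\e^{\ell+1})$. On the other hand, the equality $\psi(T,\bz,\e)=\varphi(T,\bz,\e)$ and $\f_1=\cdots=\f_{\ell-1}=0$ give $\varphi(T,\bz,\e)=\bz+\e^\ell\f_\ell(\bz)+\CO(\e^{\ell+1})$. Matching the $\e^\ell$ coefficients yields $\f_\ell=T\bg_\ell$, completing the induction.

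The main obstacle I expect is justifying cleanly that the $N$-jet in $\e$ of the time-$T$ map is unchanged under the stroboscopic transformation, i.e. $\psi(T,\cdot,\e)=\varphi(T,\cdot,\e)$ exactly (not merely up to $\CO(\e^{N+1})$), and that differentiating/integrating the flow expansion in $\e$ is legitimate to the required order given the $C^r$ hypothesis with $r\ge 2$ possibly smaller than $N+1$. For the first point one uses $\bz(t)=U(t,\bw(t),\e)^{-1}$... more precisely, if $\bx(t)$ solves \eqref{eq:e1} with $\bx(0)=\bz$ and $\bw(t)$ solves the transformed equation with $\bw(0)=\bz$, then $\bx(t)=U(t,\bw(t),\e)$, and at $t=T$ the periodicity $U(T,\cdot,\e)=U(0,\cdot,\e)=\Id$ gives $\bx(T)=\bw(T)$, hence $\varphi(T,\bz,\e)=\psi(T,\bz,\e)$. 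For the second point, since only the $\e^\ell$-jet with $\ell\le N$ is needed and the relevant coefficients are built from finitely many $\bx$-derivatives of the $F_i$ of order at most $\ell\le r$ (as visible in \eqref{yi}), the expansions are valid in the $C^1$ sense in $\bz$, which suffices. Alternatively, and perhaps more in the spirit of the cited reference \cite{Novaes21b}, one could bypass the flow argument entirely by comparing the recursion \eqref{yi} for the original system with the analogous recursion for the transformed system (whose right-hand side has vanishing lower-order terms), but the conjugacy argument above seems the most transparent.
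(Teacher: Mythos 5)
The paper does not prove Proposition~\ref{prop1}; it cites it verbatim as \cite[Corollary A]{Novaes21b}, so there is no ``paper's own proof'' to compare against. That said, your conjugacy argument is sound and self-contained, and it captures the key mechanism cleanly. The pivotal observation that the stroboscopic condition $U(0,\cdot,\e)=\Id$ together with $T$-periodicity forces $U(T,\cdot,\e)=\Id$, hence $\varphi(T,\cdot,\e)=\psi(T,\cdot,\e)$ \emph{exactly}, is correct and is what makes the Melnikov functions (built from the time-$T$ map of the original system) comparable to the $\bg_i$ (built from the transformed vector field). The induction is set up properly: at each step $\f_{\ell-1}=T\bg_{\ell-1}$ (from the previous step) converts either vanishing hypothesis into both, after which the first nonvanishing order of both $\psi(T,\bz,\e)-\bz$ and $\varphi(T,\bz,\e)-\bz$ is $\e^\ell$, and matching coefficients gives $\f_\ell=T\bg_\ell$. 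Two small points worth tightening if you were to write this up: (i) the expansion $\psi(T,\bz,\e)=\bz+\e^\ell T\bg_\ell(\bz)+\CO(\e^{\ell+1})$ relies on $\bw(s)=\bz+\CO(\e^\ell)$ uniformly on $[0,T]$, which does follow from Gr\"onwall but should be said; (ii) the regularity bookkeeping you raise is real --- the formula \eqref{yi} involves $\p_\bx^m F_{i-j}$ with $m\le i-1\le\ell-1$, so one needs $\ell-1\le r$, which is not implied by $r\ge 2$ alone for arbitrary $\ell\le N$; in the application inside Theorem~\ref{maintheorem} this is guaranteed by $\ell\le\min(N,r-2)$, but Proposition~\ref{prop1} as stated is implicitly inheriting whatever smoothness hypotheses \cite{Novaes21b} imposes. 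Your alternative route of comparing the recursion \eqref{yi} for the original and transformed systems is almost certainly closer to what the cited reference actually does; your conjugacy version is arguably more transparent conceptually, at the cost of having to justify term-by-term differentiation of the flow in $\e$.
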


The relationship established in Proposition \ref{prop1} allows us to directly calculate the first non-vanishing averaged function, thus motivating the main result of this work, i.e., an extension of the results of Krylov, Bogoliubov, Mitropolsky, and Hale to higher order averaged functions.

\subsection{Statement of the main theorem}
The existence of invariant tori in a differential system, as in the case of existence of equilibria and periodic orbits, reveals important information about the dynamical structure of the differential system. This knowledge is significantly increased if asymptotic properties of the trajectories nearby such invariant tori can be determined. Thus, before introducing our main result, we must set forth the following definition regarding asymptotic stability of invariant manifolds.
\begin{definition}
	Let \begin{align}\label{systemdefinitions}
				\dot \bx = F(\bx)
		\end{align} 
	be an autonomous differential system in $\mathbb{R}^n$ and let $\x(t,\x_0)$ be the solution of (\ref{systemdefinitions}) satisfying $\x(0,\bx_0)=\bx_0$. Let also $M$ be and $m$-dimensional invariant manifold of system (\ref{systemdefinitions})and  $V$ be a neighborhood of the manifold $M$. 
	\begin{enumerate}[label=\alph*)]
		\item The local stable set of $M$ with respect to $V$ is $$\mathcal{S}^V_M:=\{\bx_0 \in V: \bx(t,\bx_0) \in V \; \text{for all} \; t>0 \; \text{and} \; \lim_{t \to \infty} d(\bx(t,\bx_0),M)) = 0 \}.$$
		\item The local unstable set of $M$ with respect to $V$ is $$\mathcal{U}^V_M:=\{\bx_0 \in V: \bx(t,\bx_0) \in V \; \text{for all} \; t<0 \; \text{and} \; \lim_{t \to -\infty} d(\bx(t,\bx_0),M)) = 0 \}.$$
	\end{enumerate}
\end{definition}

Now, we are ready to provide our main result.

\begin{mtheorem} \label{maintheorem}
		Consider the $C^r$, $r\geq2$, differential equation (\ref{eq:e1}) and its extension \eqref{eq:systemextended}.	Suppose that, for some $\ell\in\{1,\ldots,\min(N,r-2)\},$ $\f_0=\ldots=\f_{\ell-1}=0,$  $\f_{\ell}\neq0$.
Assume that the guiding system $\dot \bz = \bg_\ell(\bz)$ has an $\omega$-periodic hyperbolic limit cycle $\varphi(t)$. Then, there exists $\ov\varepsilon>0$ such that, for each $\varepsilon \in (0,\ov\varepsilon]$, the following statements hold:
	\begin{enumerate}[label=\alph*)]
		\item {\bf Existence:} The differential system \eqref{eq:systemextended} has an invariant torus $M_\varepsilon$. In addition, there exists a neighborhood $V\subset D$ of $\Gamma:=\{\varphi(t):\,t\in\R\}$ such that any invariant compact manifold of \eqref{eq:systemextended} contained in $\s^1\times V$ must be contained in $M_{\e}$. In particular, $M_{\e}$ is the unique invariant torus in $\s^1\times V$.
		\item {\bf Regularity:} The invariant torus $M_\varepsilon$ is of class $C^{r-\ell}$. Furthermore, there is a $C^0$-continuous family of $C^{r-\ell}$ functions $\mathcal{F}_{\e}:\R^2\to\R^{n}$, $T-$periodic in the first coordinate and $\omega$-periodic in the second coordinate, such that $M_\varepsilon=\{(\tau,\mathcal{F}_{\e}(\tau,\theta)):\,(\tau,\theta)\in\s^1\times\R\}.$
		\item {\bf Convergence:} There is a continuous function $\delta: [0,\ov \varepsilon] \to \mathbb{R}_+$ satisfying $ \delta(0) = 0$ such that $\|\mathcal{F}_\varepsilon(\tau,\theta) - U(\tau,\varphi(\theta),\e)\|<\delta(\varepsilon)$ for every $(\tau,\theta)\in\R^2$, where $U$ is the transformation given by Theorem \ref{thm:av1}. In particular, $M_{\e}$ converges to $\s^1\times\Gamma$ in the Hausdorff distance as $\e \to 0$.
		\item {\bf Stability:} Let $k \leq n-1$ be the number of characteristic multipliers of $\Gamma$
		with modulus less than $1$. Then, there are neighborhoods $W_s$, $V_s$, $W_u$, and $V_u$ of $M_\varepsilon$ such that
		\begin{enumerate}[label=d.\arabic*)]
			\item $\mathcal{S}^{V_s}_{M_\varepsilon} \cap W_s$ is a $(k+2)$-dimensional manifold embedded in $\mathbb{R}^{n+1}$;
			\item $\mathcal{U}^{V_u}_{M_\varepsilon} \cap W_u$  is a $(n-k+1)$-dimensional manifold embedded in $\mathbb{R}^{n+1}$.
		\end{enumerate}
\item {\bf Dynamics:} The flow of \eqref{eq:systemextended} restricted to $M_{\e}$ defines a first return map $p_\varepsilon:S_{\e}\to S_{\e}$ where, for  $\Sigma=\{(0,\x):\, \x\in D\}$ a transversal section of \eqref{eq:systemextended}, $S_{\e}:=\Sigma\cap M_{\e}$ is $C^{r-\ell}$ diffeomorphic to the circle $\s^1$. Moreover, $p_\varepsilon$ is of class $C^{r-\ell}$; its rotation number $\rho(\e)$  is a continuous function on $\e\in[0,\ov \e]$ satisfying $\rho(\e)=\e^{\ell} T/\omega+\CO(\e^{\ell+1})$; and, finally, if $r-\ell\geq4$, then $\rho$ maps zero Lebesgue measure sets to zero Lebesgue measure sets, and there exists a positive Lebesgue measure set $E\subset [0,\ov \e]$ such that, for every $\e\in E$, $\rho(\e)$ is irrational and $p_{\varepsilon}$ is $C^{r-\ell-3}$ conjugated to an irrational rotation.
\end{enumerate}
\end{mtheorem}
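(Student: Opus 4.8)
The plan is to use averaging to recast \eqref{eq:systemextended} as a small non-autonomous perturbation of the guiding system, recognize the suspension $\s^1\times\Gamma$ of the limit cycle as a normally hyperbolic invariant torus of a perturbed time-$T$ map, and then read off conclusions (a)--(e) from the structure of the persisting torus. First I would apply Theorem \ref{thm:av1}, truncated at order $\ell$ (legitimate since $\ell\le N$, the terms $F_{\ell+1},\ldots,F_N,\Tilde{F}$ being absorbed into the remainder), obtaining a $T$-periodic near-identity $C^r$ change of variables $\bx=U(\tau,\bz,\e)$, $U(0,\bz,\e)=\bz$, that transforms \eqref{eq:systemextended} into $\tau'=1$, $\bz'=\sum_{i=1}^{\ell}\e^i\bg_i(\bz)+\e^{\ell+1}R(\tau,\bz,\e)$. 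Since $\f_0=\cdots=\f_{\ell-1}=0$, Proposition \ref{prop1} forces $\bg_1=\cdots=\bg_{\ell-1}=0$ and $\bg_\ell=\f_\ell/T\neq0$, so this reads
\begin{equation}\label{eq:redsys}
\tau'=1,\qquad \bz'=\e^\ell\bg_\ell(\bz)+\e^{\ell+1}R(\tau,\bz,\e),
\end{equation}
with $\bg_\ell$ of class $C^{r-\ell+1}$ and $R$ of class $C^{r-\ell}$, $T$-periodic in $\tau$ (regularity traced through \eqref{yi} and the construction of $U$). As $U(0,\cdot,\e)=U(T,\cdot,\e)=\Id$, the time-$T$ maps of \eqref{eq:systemextended} and \eqref{eq:redsys} coincide, and more generally invariant tori, local stable/unstable sets, first-return maps, and rotation numbers of the two systems correspond through $\bx=U(\tau,\bz,\e)$; hence it suffices to prove (a)--(e) for \eqref{eq:redsys} and then push forward by $U$ (which also yields the explicit $\mathcal{F}_{\e}$ in (b) and the occurrence of $U$ in (c)).

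For the existence and regularity, I would study the time-$T$ Poincaré map $\Pi_\e:D\to D$ of \eqref{eq:redsys} on $\{\tau=0\}$. Writing $\Phi^t$ for the flow of the guiding system $\dot\bz=\bg_\ell(\bz)$ and expanding the solution of \eqref{eq:redsys} over $[0,T]$, one gets $\Pi_\e=\Phi^{\e^\ell T}+\CO(\e^{\ell+1})$ in $C^{r-\ell}$ (here $\ell\ge1$ is what lets one absorb the $\CO(\e^{2\ell})$ terms of $\Phi^{\e^\ell T}$). Now $\Gamma$ is a normally hyperbolic invariant curve of $\Phi^{\e^\ell T}$: in the parametrization by $\R/\omega\Z$ its internal dynamics is the rigid rotation by $\e^\ell T/\omega$, an isometry, while its transverse contraction and expansion rates are $|\mu_i|^{\e^\ell T/\omega}$ — differing from $1$ by $\CO(\e^\ell)$, where the $\mu_i$ are the characteristic multipliers of $\Gamma$ — with the stable/unstable normal splitting inherited from the $\mu_i$. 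The hyperbolicity gap is therefore of order $\e^\ell$, dominating the $\CO(\e^{\ell+1})$ perturbation once $\e$ is small; applying the quantitative persistence theorem for normally hyperbolic invariant manifolds furnishes $\ov\e>0$ such that, for $\e\in(0,\ov\e]$, $\Pi_\e$ has a unique invariant closed curve $\gamma_\e$ in a fixed neighborhood $V$ of $\Gamma$, depending continuously on $\e$, with $\dist(\gamma_\e,\Gamma)=\CO(\e)$; because the internal dynamics is an isometry, no spectral obstruction to smoothness arises, so $\gamma_\e$ is of class $C^{r-\ell}$. Then $M_\e$ is the union over $t\in[0,T]$ of the orbit segments of \eqref{eq:redsys} issuing from $\{0\}\times\gamma_\e$; parametrizing $\gamma_\e$ by $\theta\in\R/\omega\Z$ (pulled back from $\varphi$) and flowing gives the $C^{r-\ell}$ map $\mathcal{F}_{\e}$ of (b). Since any compact invariant set of \eqref{eq:redsys} inside $\s^1\times V$ meets $\{\tau=0\}$ in a $\Pi_\e$-invariant subset of $V$, hence of $\gamma_\e$, it lies in $M_\e$: this gives the uniqueness/absorption claims of (a).

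The remaining items follow from the structure of $M_\e$. For (c), combining $\dist(\gamma_\e,\Gamma)=\CO(\e)$, continuity of $\gamma_\e$ in $\e$, and $\|U(\tau,\varphi(\theta),\e)-\varphi(\theta)\|=\CO(\e)$ produces a continuous $\delta(\e)\to0$ with $\|\mathcal{F}_{\e}(\tau,\theta)-U(\tau,\varphi(\theta),\e)\|<\delta(\e)$, whence $M_\e\to\s^1\times\Gamma$ in Hausdorff distance. For (d), $\Gamma$ being hyperbolic with $k$ characteristic multipliers of modulus $<1$ makes $M_\e$ normally hyperbolic for \eqref{eq:redsys} with normal splitting $E^s\oplus E^u$, $\dim E^s=k$, $\dim E^u=n-1-k$; its local stable and unstable manifolds are embedded of dimensions $\dim M_\e+k=k+2$ and $\dim M_\e+(n-1-k)=n-k+1$, and choosing $W_s,V_s$ (resp. $W_u,V_u$) as the standard stability neighborhoods from normal hyperbolicity identifies $\mathcal{S}^{V_s}_{M_\e}\cap W_s$ with $W^s_{\mathrm{loc}}(M_\e)\cap W_s$ (resp. $\mathcal{U}^{V_u}_{M_\e}\cap W_u$ with $W^u_{\mathrm{loc}}(M_\e)\cap W_u$); push forward by $U$. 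For (e), $S_\e=\Sigma\cap M_\e=\gamma_\e$ is $C^{r-\ell}$-diffeomorphic to $\s^1$, $p_\e=\Pi_\e|_{\gamma_\e}$ is a $C^{r-\ell}$ circle diffeomorphism, and since it is $C^0$-close to the rotation $\Phi^{\e^\ell T}|_\Gamma$ by $\e^\ell T/\omega$ while the rotation number is a conjugacy invariant that is $1$-Lipschitz in the $C^0$-distance, $\rho(\e)=\e^\ell T/\omega+\CO(\e^{\ell+1})$ and $\rho$ is continuous on $[0,\ov\e]$; moreover $\e\mapsto p_\e$ is Lipschitz in $C^0$, so $\rho$ is Lipschitz and therefore maps Lebesgue-null sets to Lebesgue-null sets. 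Finally, when $r-\ell\ge4$, Herman's theory of one-parameter families of circle diffeomorphisms yields a positive-measure set $E\subset[0,\ov\e]$ on which $\rho(\e)$ is Diophantine, and for such $\e$ the Herman--Yoccoz linearization theorem makes $p_\e$ be $C^{r-\ell-3}$-conjugate to the rotation by $\rho(\e)$.

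The main obstacle will be the existence step. Unlike in the classical first-order theory, the unperturbed cylinder field ($\e=0$) is degenerate — its $\bz$-component vanishes — so there is no normally hyperbolic torus available to perturb directly: one must first extract the scaling \eqref{eq:redsys} through averaging and Proposition \ref{prop1}, and then show carefully that the $\CO(\e^\ell)$ hyperbolicity gap produced by the hyperbolic limit cycle dominates the $\CO(\e^{\ell+1})$ remainder uniformly in $\tau$, so that the persistence theorem genuinely applies on a whole interval $(0,\ov\e]$ — precisely the situation addressed by Hale-type integral-manifold arguments. Obtaining the optimal class $C^{r-\ell}$ additionally requires careful bookkeeping of the derivative losses in Theorem \ref{thm:av1} together with the fact that the internal dynamics on $\Gamma$ is an isometry, which removes the usual spectral obstruction to smoothness. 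The other delicate point is the last part of (e): promoting the asymptotics of $\rho(\e)$ to the measure-theoretic and smooth-conjugacy conclusions requires the Herman--Yoccoz theory of circle diffeomorphisms.
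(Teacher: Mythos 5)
Your overall strategy coincides with the paper's: apply Theorem~\ref{thm:av1} and Proposition~\ref{prop1} to reduce to the perturbed guiding system, exploit the hyperbolicity of $\Gamma$ to persist $\s^1\times\Gamma$ quantitatively, and then read off regularity, convergence, stability, and circle dynamics from the persisted object. Where you diverge is in how the persistence step is organized: the paper first passes to Floquet coordinates around $\Gamma$ (with the $2\omega$-periodic real $P(t)$ to handle negative multipliers) and then to the rescaled time $\tilde t=\e^\ell t$, landing in the exact hypotheses of Lemma~\ref{lemmahale} and Propositions~\ref{corollaryregularity1}--\ref{corollaryregularity3}, which are proved from scratch by contraction mappings; you propose instead to apply an abstract quantitative normally hyperbolic invariant manifold theorem to the time-$T$ Poincar\'e map $\Pi_\e=\Phi^{\e^\ell T}+\CO(\e^{\ell+1})$. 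These two are essentially the same estimate cast differently, and you are right to flag the crux: the hyperbolicity gap is only $\CO(\e^\ell)$, so a one-shot persistence theorem for a fixed unperturbed map does not apply; what is needed is precisely the quantitative/Hale-type statement, which the paper supplies as Lemma~\ref{lemmahale} together with its regularity companions. As it stands, your proposal presupposes rather than supplies that lemma.

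There is, however, one concrete error. To obtain the Lusin-N property in part~e) you argue ``$\e\mapsto p_\e$ is Lipschitz in $C^0$, so $\rho$ is Lipschitz and therefore maps Lebesgue-null sets to Lebesgue-null sets.'' This is false: the rotation number is \emph{not} a Lipschitz function of a circle diffeomorphism in the $C^0$-metric. Near a saddle-node bifurcation of a periodic orbit (equivalently, on approach to the boundary of a mode-locking interval), $\rho$ varies like the square root of the $C^0$-perturbation size by the classical bottleneck estimate, so along a $C^0$-Lipschitz path $\e\mapsto p_\e$ the function $\rho(\e)$ is in general only a devil's staircase with square-root singularities, not Lipschitz. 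Since a H\"older map of exponent $<1$ can increase Hausdorff dimension and need not send Lebesgue-null sets to Lebesgue-null sets, the Lusin-N property does not follow from your argument. It genuinely requires Herman's theorem on one-parameter families of circle diffeomorphisms, which is exactly Theorem~\ref{theoremHerman} in the paper and which simultaneously gives both the Lusin-N property and the positive-measure set of parameters with a smooth conjugacy. Your earlier invocation of the same ``$1$-Lipschitz'' claim to obtain $\rho(\e)=\e^\ell T/\omega+\CO(\e^{\ell+1})$ happens to be harmless, because there you are comparing $p_\e$ to a rigid rotation, and the sandwich $R_{\alpha-\eta}\le p_\e\le R_{\alpha+\eta}$ does yield $|\rho(\e)-\alpha|\le\eta$; but that special-case monotonicity argument does not generalize to comparing two arbitrary nearby circle diffeomorphisms, and should not be quoted as a general Lipschitz property of $\rho$.

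Two smaller remarks. First, note that to invoke Theorem~\ref{theoremHerman} one must verify that $\e\mapsto p_\e$ is a $C^1$-path in $D^0(\s^1)$ landing in $D^{\gamma}(\s^1)$ with $\gamma=r-\ell-1$; this is where Propositions~\ref{corollaryregularity2}, \ref{propositiondeltatheta}, and \ref{corollaryregularity3} are actually needed, and a proposal that omits them is skipping a nontrivial regularity-in-$\e$ step. Second, your route via a positive-measure Diophantine set plus the Herman--Yoccoz linearization theorem would likely yield a slightly weaker smoothness of the conjugacy ($C^{r-\ell-3-\epsilon}$ rather than the clean $C^{r-\ell-3}$), whereas citing Theorem~\ref{theoremHerman} directly gives the stated exponent.
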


Theorem \ref{maintheorem} is proved in Section \ref{sec:proof} after establishing some preliminary results in Subsection \ref{sec:fl}.

\begin{remark}
Since  $p_\varepsilon$ is at least of class $C^2$ (because $r-\ell\geq 2$), if $\rho(\varepsilon)$ is irrational, then $p_\varepsilon$ is topologically conjugate to an irrational rotation (see, for instance, \cite[Theorems 2.4 and 2.5]{haleordinary}).  In this case, the dynamics of \eqref{eq:systemextended} on the invariant torus $M_{\e}$ corresponds to an irrational flow and, therefore, the torus $M_{\e}$ is a minimal invariant compact manifold, in the sense that there is no other compact invariant manifold of \eqref{eq:systemextended} contained in $M_{\e}$ besides itself. Therefore, since $\rho(\e)=\e^{\ell} T/\omega+\CO(\e^{\ell+1})$ is continuous, we can always find $\e^*\in (0,\ov \e]$ such that $M_{\e^*}$ is minimal.
\end{remark}

\begin{remark}
The conclusion provided by statement e) that ``if $r-\ell\geq4$, then $\rho$ maps zero Lebesgue measure sets to zero Lebesgue measure sets'' is known as Luzin-N-property of the function $\rho$.
\end{remark}

\subsection{Application: invariant tori in $4$D vector fields} \label{subsec:app}
Theorem \ref{maintheorem} provides a means for investigating the existence of invariant tori also in higher dimensional vector fields. For instance, for a positive integer $N\geq 2$, consider the following $4$D  autonomous differential system
\begin{equation} \label{systemexample}
	\begin{aligned}
		&\dot x = -y +\varepsilon^N f_1(x,y,u,v) + \varepsilon^{N+1} g_1(x,y,u,v)+\e^{N+2} h_1(x,y,u,v,\e), \\
		&\dot y = x + \varepsilon^N f_2(x,y,u,v) + \varepsilon^{N+1} g_2(x,y,u,v) +\e^{N+2} h_2(x,y,u,v,\e), \\
		&\dot u = \varepsilon^N f_3(x,y,u,v) + \varepsilon^{N+1} g_3(x,y,u,v) +\e^{N+2} h_3(x,y,u,v,\e), \\
		&\dot v = \varepsilon^N f_4(x,y,u,v) + \varepsilon^{N+1} g_4(x,y,u,v)+\e^{N+2} h_4(x,y,u,v,\e) ,
	\end{aligned}
\end{equation}
where $\varepsilon$ is a small positive parameter; $\mu\in\{-1,1\}$; $f_i$, for $i \in \{1,2,3,4\}$, are functions of class $C^r$, $r\geq4$, satisfying that
\begin{equation}
\begin{aligned}\label{condonfs}
&\theta\mapsto \cos (\theta ) f_1(r \cos (\theta ),r \sin (\theta ),u,v)+\sin (\theta ) f_2(r \cos (\theta ),r \sin (\theta ),u,v),\\
&\theta\mapsto  f_3(r \cos (\theta ),r \sin (\theta ),u,v),  \,\, \text{and}\\
& \theta\mapsto  f_4(r \cos (\theta ),r \sin (\theta ),u,v) \\
\end{aligned}
\end{equation}
have vanishing average over $\theta \in [0,2\pi]$; $g_i$, for $i \in \{1,2,3,4\}$, are given by 
\begin{equation*}
	\begin{aligned}
		&g_1(x,y,u,v) = \mu x(x^2+y^2), \\
		&g_2(x,y,u,v) =  -\mu y (x^2+y^2)^2,\\
		&g_3(x,y,u,v) =  x^2 (u (-u^2-v^2+1)+v),\\
		&g_4(x,y,u,v) = y^2(v (-u^2-v^2+1)-u);
	\end{aligned}
\end{equation*}
and $h_i$, for $i \in \{1,2,3,4\}$, are $C^r$, $r\geq 4$, functions.

\begin{proposition}\label{exprop}
Assume the conditions above for the differential system \eqref{systemexample}. Then,
for any integer $N\geq 2$ and $\e>0$ sufficiently small, the differential system \eqref{systemexample} has an invariant torus $\mathbb{T}_{\e}$ converging, as $\e$ goes to $0$, to $\mathbb{T}=\mathbb{S}^1\times\mathbb{S}^1$. Moreover, the invariant torus is asymptotically stable provided that $\mu=1$ and has an unstable direction provided that $\mu=-1$.
\end{proposition}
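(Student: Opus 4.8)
The plan is to bring the autonomous field \eqref{systemexample} into the $2\pi$-periodic standard form \eqref{eq:e1}, compute its first non-vanishing averaged function, exhibit a hyperbolic limit cycle of the corresponding guiding system, and then invoke Theorem~\ref{maintheorem}. First I would pass to polar coordinates $x=r\cos\theta$, $y=r\sin\theta$ in the first two variables, keeping $(u,v)$; a direct computation gives $\dot\theta=1+\CO(\e^{N})$ uniformly for $r$ in a compact subset of $(0,\infty)$, so for $\e$ small $\theta$ is strictly increasing and may be used as the new time. Dividing the $r,u,v$ equations by $\dot\theta$ (and using $1/\dot\theta=1+\CO(\e^{N})$ together with $2N\ge N+2$ for $N\ge2$) yields a $2\pi$-periodic, $C^{r}$ system
\[
(\dot r,\dot u,\dot v)=\e^{N}G_{0}(\theta,r,u,v)+\e^{N+1}G_{1}(\theta,r,u,v)+\e^{N+2}G_{2}(\theta,r,u,v,\e),
\]
where $G_{0}=\big(\cos\theta\,f_{1}+\sin\theta\,f_{2},\,f_{3},\,f_{4}\big)$ and $G_{1}=\big(\cos\theta\,g_{1}+\sin\theta\,g_{2},\,g_{3},\,g_{4}\big)$, evaluated at $(r\cos\theta,r\sin\theta,u,v)$. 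This matches \eqref{eq:e1}/\eqref{eq:systemextended} with $n=3$, $T=2\pi$, and $F_{1}=\dots=F_{N-1}=0$, $F_{N}=G_{0}$, $F_{N+1}=G_{1}$.

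Since $F_{1}=\dots=F_{N-1}=0$, the recursion \eqref{yi} collapses: $\f_{1}=\dots=\f_{N-1}=0$, the function $\f_{N}(r,u,v)=\int_{0}^{2\pi}G_{0}\,d\theta$ vanishes by hypothesis \eqref{condonfs}, and (the only Bell term that could survive at order $N+1$ carries the factor $y_{1}=0$) $\f_{N+1}=2\pi\,\bg_{N+1}$ with $\bg_{N+1}=\frac1{2\pi}\int_{0}^{2\pi}G_{1}\,d\theta$. Inserting the explicit $g_{i}$ and using $\int_{0}^{2\pi}\cos^{2}=\int_{0}^{2\pi}\sin^{2}=\pi$,
\[
\bg_{N+1}(r,u,v)=\tfrac12\Big(\mu\,r^{3}(1-r^{2}),\ r^{2}\big(u(1-u^{2}-v^{2})+v\big),\ r^{2}\big(v(1-u^{2}-v^{2})-u\big)\Big),
\]
which is not identically zero; hence $\ell=N+1$ is the first index with $\f_{\ell}\neq0$ and, by Proposition~\ref{prop1}, the guiding system is $(\dot r,\dot u,\dot v)=\bg_{N+1}(r,u,v)$. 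The circle $\{r=1\}$ is invariant for it, and in polar coordinates $u=\rho\cos\psi$, $v=\rho\sin\psi$ the equations on $\{r=1\}$ become $\dot\rho=\tfrac12\rho(1-\rho^{2})$, $\dot\psi=-\tfrac12$; thus $\G:=\{r=1\}\cap\{u^{2}+v^{2}=1\}$ is a periodic orbit of period $\omega=4\pi$. Its monodromy is block triangular: the $r$-direction is decoupled, with transverse variational equation $\dot\xi=-\mu\xi$ and multiplier $e^{-4\pi\mu}$, while the $(u,v)$-block contributes $1$ (along $\G$) and $e^{-4\pi}$ (because $\partial_{\rho}\big(\tfrac12\rho(1-\rho^{2})\big)\big|_{\rho=1}=-1$). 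So $\G$ is a hyperbolic limit cycle whose two transverse multipliers are $e^{-4\pi}<1$ and $e^{-4\pi\mu}$; the number inside the unit circle is $k=n-1=2$ if $\mu=1$ and $k=1$ if $\mu=-1$.

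With this data, Theorem~\ref{maintheorem} yields, for $\e>0$ small, an invariant torus $M_{\e}$ of the system above converging to $\s^{1}\times\G$ as $\e\to0$. Undoing the change of coordinates (a diffeomorphism near $\{r=1\}$) and the time-reparametrization (which preserves forward and backward asymptotics) turns $M_{\e}$ into an invariant $2$-torus $\mathbb{T}_{\e}$ of \eqref{systemexample}, converging to $\{x^{2}+y^{2}=1\}\times\{u^{2}+v^{2}=1\}=\s^{1}\times\s^{1}=\mathbb{T}$. Statement~d) supplies the stability: when $\mu=1$, $k=n-1$ makes $\mathcal{S}^{V_{s}}_{M_{\e}}\cap W_{s}$ an $(n+1)$-dimensional set, i.e.\ an open neighborhood of $M_{\e}$, so $\mathbb{T}_{\e}$ is asymptotically stable; when $\mu=-1$, the $(n-k+1)=3$-dimensional local unstable set of $M_{\e}$ gives an unstable direction.

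The main obstacle is a regularity count: $\ell=N+1$ can exceed $r-2$, so Theorem~\ref{maintheorem} is not applicable verbatim once $N\ge r-2$ (in particular not for $r=4$, $N\ge2$). I would get around this with a single near-identity averaging step removing $\e^{N}G_{0}$ — legitimate precisely because $\int_{0}^{2\pi}G_{0}\,d\theta=0$ — after which the system reads $\dot\bz=\e^{N+1}G_{1}(\theta,\bz)+\e^{N+2}\widehat G(\theta,\bz,\e)$, still of class $C^{r-1}$ (hence at least $C^{3}$). Rescaling the small parameter to $\nu:=\e^{N+1}$, its time-$2\pi$ map is $\mathrm{Id}+2\pi\nu\,\bg_{N+1}+o(\nu)$, i.e.\ a perturbation, of size small relative to $\nu$, of the time-$2\pi$ map of the guiding field $\bg_{N+1}$ — exactly the regime in which the normally-hyperbolic-persistence mechanism behind Theorem~\ref{maintheorem} produces the invariant circle (equivalently, this is the $\ell=1$ case of the theorem). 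The two technical points to verify carefully are that the remainder $o(\nu)$ is still tolerated (it is, being small compared with the $\e$-independent normal rates $e^{-4\pi}$, $e^{-4\pi\mu}$ of $\s^{1}\times\G$) and that the threshold $\ov\e$ in Theorem~\ref{maintheorem} is uniform in the auxiliary parameter $\e$. Everything else — the computation of $\bg_{N+1}$ and of the multipliers of $\G$ — is routine.
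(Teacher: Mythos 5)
Your approach is essentially the paper's: cylindrical coordinates, $\theta$ as new time, identification of $\ell=N+1$ via Proposition~\ref{prop1}, the periodic orbit on $\{r=1\}\cap\{u^2+v^2=1\}$, hyperbolicity via the multipliers $e^{-4\pi}$ and $e^{-4\pi\mu}$, and the stability reading of statement d) of Theorem~\ref{maintheorem}. In fact your intermediate computation is cleaner than the paper's: your $\bg_{N+1}$ and the period $\omega=4\pi$ are correct, whereas the paper labels as $\f_{N+1}$ what is actually $\bg_{N+1}$ (a factor $2\pi$ short) and correspondingly quotes $\omega=8\pi^2$; the two slips cancel, so the paper reaches the same multipliers.

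Your last paragraph raises a genuine issue that the paper passes over. With $\ell=N+1$, the hypothesis $\ell\le\min(N+1,r-2)$ of Theorem~\ref{maintheorem} requires $r\ge N+3$, yet Proposition~\ref{exprop} only assumes $r\ge4$ while claiming the result for every $N\ge2$; as written, the paper's proof covers only $N\le r-3$. Your instinct — eliminate $\e^N G_0$ by one near-identity averaging step (legitimate because $\int_0^{2\pi}G_0\,d\theta=0$), then rescale — is the right one, but rescaling $\nu=\e^{N+1}$ does not literally land you in the $\ell=1$ case of Theorem~\ref{maintheorem}: the post-averaging remainder is $\e^{N+2}\widehat G=\nu^{1+\frac1{N+1}}\widehat G$, so recasting as $\nu F_1+\nu^2\tilde F$ forces $\tilde F=\nu^{-\frac{N}{N+1}}\widehat G$, which is unbounded as $\nu\to0$ and therefore outside the class \eqref{eq:e1}. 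The cleaner repair is to note that, because $F_1=\cdots=F_{N-1}=0$, the transformation of Theorem~\ref{thm:av1} and the remainder $r_{N+1}$ in \eqref{eq:trans1} involve only $t$-primitives and a bounded number of $\bz$-derivatives of $F_N,F_{N+1}$, that bound being independent of $N$; thus $r_{N+1}$ is at least $C^{r-2}$ (not merely $C^{r-N-1}$), and the proof of Theorem~\ref{maintheorem} goes through with $p=r-2\ge2$ for every $N\ge2$. With that refinement your argument — and the paper's — becomes complete.
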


Proposition \ref{exprop} is proven in Section \ref{sec:app}.

\begin{example}
Assuming that
\[
\begin{aligned}
f_1(x,y,u,v)=y u,\,\, f_2(x,y,u,v)=-x v,\,\, f_3(x,y,u,v)=x^3,\,\,\text{and}\,\, f_4(x,y,u,v)=y^3,\\
\end{aligned}
\]
one can easily see that the functions given in \eqref{condonfs} have vanishing average. Thus, Proposition \ref{exprop} can be applied to provide the existence of  an invariant torus for $\e>0$ sufficiently small. In Figure \ref{unpertorus}, assuming $N=2, $ $\mu=1$, $h_i=0,$ $i=1,\ldots,4$, and $\e=1/15$, we provide a numeric simulation (performed on Mathematica) of the Poincar\'{e} map defined on the section $\Sigma=\{(x,0,u,v):\,x>0\}$ of the differential system \eqref{systemexample}. The asymptotically stable invariant tori $\mathbb{T}_{\e}$ corresponds to an asymptotically stable invariant closed curve $\gamma_{\e}:=\mathbb{T}_{\e}\cap \Sigma$ for the Poincar\'{e} map.

\begin{figure}[H]
	\begin{overpic}[width=10cm]{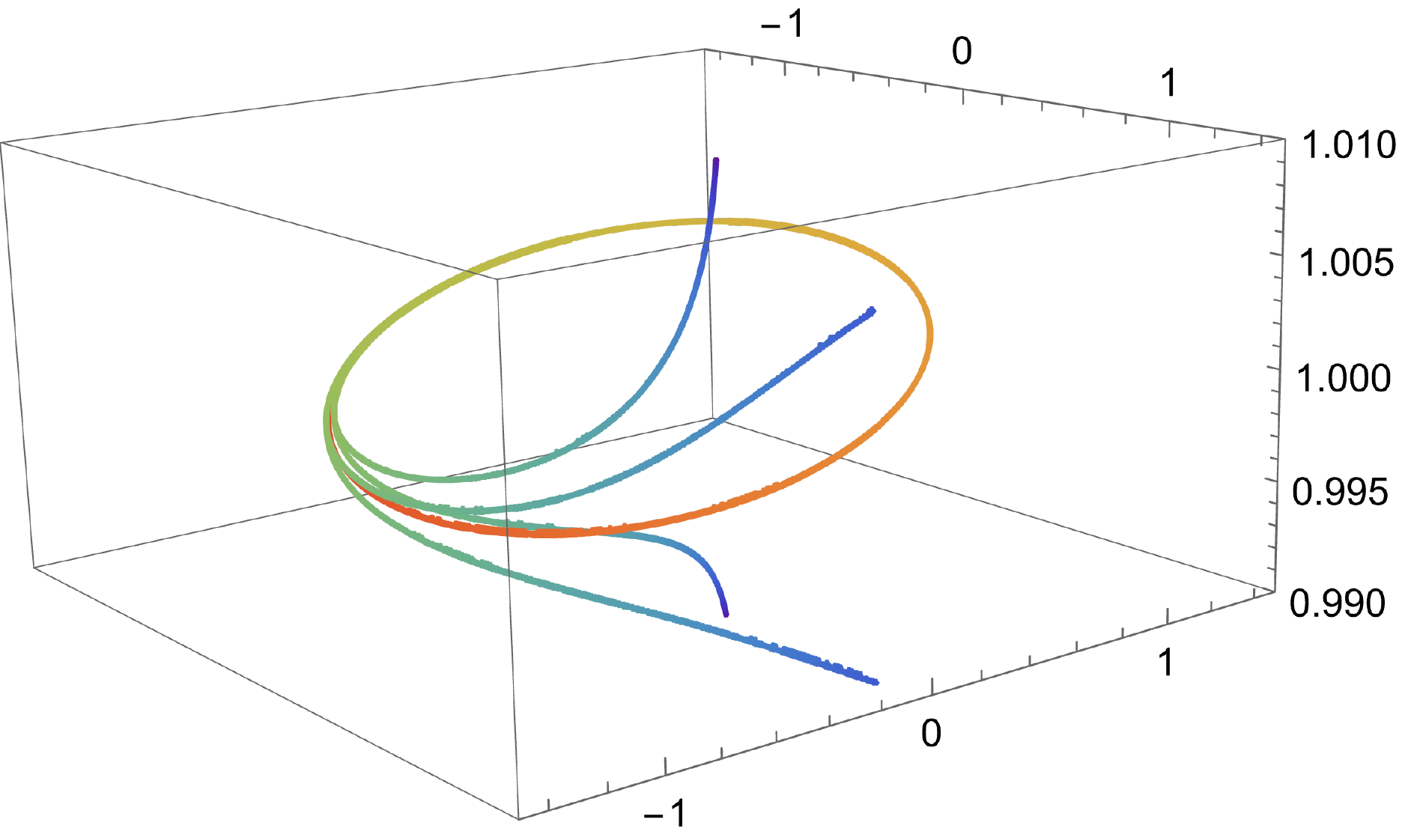}
	
		\put(10,53){$\Sigma$}
		\put(73,15){$u$}
		\put(88,33){$x$}
		\put(73,49){$v$}
		\put(67,35){$\gamma_{\e}$}
	\end{overpic}
	\caption{Assuming $N=2, $ $\mu=1$, $h_i=0,$ $i=1,\ldots,4$, and $\e=1/15$, we show
	$10345$ iterations of the Poincar\'{e} map of \eqref{systemexample}, defined on the section $\Sigma=\{(x,0,u,v):\,x>0\},$ for the initial values: $(1.01,0,2,0)$, $(0.99,0,2,0)$, $(1.01,0,0.5,0),$ and $(0.99,0,0.5,0)$. The orbits are attracted by the closed curve $\gamma_{\e}$, which corresponds to the intersection between the invariant torus $\mathbb{T}_{\e}$ with the section $\Sigma$. For the web version of the paper, purple points indicate a low number of iterations, whilst red points indicate a high number of iterations.}
	\label{unpertorus}	
\end{figure}
\end{example}

\section{Fundamental Lemma} \label{sec:fl}
The proof of Theorem \ref{maintheorem} makes use of some results concerning integral manifolds of a class of perturbed differential systems. Such results, and the methods employed for obtaining them, are similar to those established by Hale in \cite[Lemmas 2.1, 2.2 and 2.3]{haleinvariant} (see also \cite[Section 28, Lemmas 1, 2, and 3]{bogo} and \cite{haleoscillations}). In this section, we state and prove those results in the form of a single Lemma, along with a Proposition addressing the issue of regularity of the integral manifolds obtained.

Throughout the paper, we will adopt the notation $\text{diag}(A_1,\ldots,A_n)$ to represent the direct sum $A_1 \oplus\dots \oplus A_n$ of the square matrices $A_i,$ $i\in\{1,\ldots,n\}$. We will also employ the notation $B_n(p,r)$ for the $n$-dimensional open ball $\{x \in \mathbb{R}^n: \|x-p\|<r \}$.

We consider a one-parameter family of differential systems of the form
\begin{equation} \label{systemlemma}
	\begin{aligned} 
		&\theta' = 1 + \zeta_0(t,\theta,y,z,\varepsilon), \\
		&y' = H_1\cdot y + \zeta_1(t,\theta,y,z,\varepsilon), \\
		&z' = H_2 \cdot z + \zeta_2(t,\theta,y,z,\varepsilon),
	\end{aligned}
\end{equation}
where $\theta \in \mathbb{R}$, $y \in \mathbb{R}^m$, $z \in \R^n$, $\varepsilon$ is a real parameter, $H_1$ is a real $m \times m$ matrix, $H_2$ is a real $n \times n$ matrix, and the continuous functions $\zeta_0:\mathbb{R} \times \mathbb{R} \times B_m(0,\rho_1) \times B_n(0,\rho_2) \times (0,\varepsilon_0] \to \R$, $\zeta_1:\mathbb{R} \times \mathbb{R} \times B_m(0,\rho_1) \times B_n(0,\rho_2) \times (0,\varepsilon_0] \to \R^m$, and $\zeta_2:\mathbb{R} \times \mathbb{R} \times B_m(0,\rho_1) \times B_n(0,\rho_2) \times (0,\varepsilon_0] \to \R^n$ have Lipschitz continuous partial derivatives with respect to $(\theta,y,z)$ up to the $p$-th order, where $p\geq 1$, $\rho_1,\rho_2>0$, and $\varepsilon_0>0$. For conciseness, we define, for each $(\sigma,\mu)\in (0,\rho_1) \times (0,\rho_2)$, the set
\[
\Sigma_{\sigma,\mu}^{\varepsilon_0} := \mathbb{R}\times\mathbb{R} \times \bar{B}_m(0,\sigma) \times \bar{B}_n(0,\mu) \times (0,\varepsilon_0].
\]

We suppose that the following hypotheses are satisfied by \eqref{systemlemma}:
\begin{enumerate}[label=\roman*)]
	\item There is $\omega>0$ such that 
	\[
		\begin{aligned}
			&\zeta_0(t,\theta+\omega,y,z,\e) = \zeta_0(t,\theta,y,-z,\e), \\
			&\zeta_1(t,\theta+\omega,y,z,\e) = \zeta_1(t,\theta,y,-z,\e), \\
			&\zeta_2(t,\theta+\omega,y,z,\e) = -\zeta_2(t,\theta,y,-z,\e).
		\end{aligned}
	\]
	\item There is a continuous function $M:[0,\varepsilon_0]\to \mathbb{R}_+$ such that $M(0)=0$ and the functions $\zeta_i$ satisfy $|\zeta_0(t,\theta,0,0,\varepsilon)|\leq M(\varepsilon)$, $\|\zeta_1(t,\theta,0,0,\varepsilon)\|\leq M(\varepsilon)$, and $\|\zeta_2(t,\theta,0,0,\varepsilon)\|\leq M(\varepsilon)$ for all $(t,\theta,\varepsilon) \in \mathbb{R}\times\mathbb{R} \times (0,\varepsilon_0]$.
	\item There is a continuous function $L:(0,\varepsilon_0] \times [0,\rho_1) \times [0,\rho_2) \to \mathbb{R}_+$ such that 
	$$\lim_{(\e,\sigma,\mu) \to (0,0,0)} L(\e,\sigma,\mu)=0,$$
	and, for $(t,\theta_1,y_1,z_1,\varepsilon),(t,\theta_2,y_2,z_2,\varepsilon) \in \Sigma_{\sigma,\mu}^{\varepsilon_0}$, the following inequalities hold true:
	\[
	\begin{aligned}
	&|\zeta_0(t,\theta_1,y_1,z_1,\varepsilon)- \zeta_0(t,\theta_2,y_2,z_2,\varepsilon)|\leq L(\e,\sigma,\mu)\|(\theta_1,y_1,z_1)-(\theta_2,y_2,z_2)\|, \\
	&\|\zeta_1(t,\theta_1,y_1,z_1,\varepsilon)- \zeta_1(t,\theta_2,y_2,z_2,\varepsilon)\|\leq L(\e,\sigma,\mu)\|(\theta_1,y_1,z_1)-(\theta_2,y_2,z_2)\|, \\  &\|\zeta_2(t,\theta_1,y_1,z_1,\varepsilon)- \zeta_2(t,\theta_2,y_2,z_2,\varepsilon)\|\leq L(\e,\sigma,\mu)\|(\theta_1,y_1,z_1)-(\theta_2,y_2,z_2)\|.
	\end{aligned}
	\]
	
	\item The eigenvalues of $H_1$ and $H_2$ have non-zero real parts.
\end{enumerate}

Let $\big(\theta(t,t_0,\theta_0,y_0,z_0,\e),y(t,t_0,\theta_0,y_0,z_0,\e),z(t,t_0,\theta_0,y_0,z_0,\e)\big)$ denote the solution of (\ref{systemlemma}) with initial conditions $(t_0,\theta_0,y_0,z_0,\e)$. Having set forth the hypotheses above, we are now ready to state the Lemma.
\begin{lemma} \label{lemmahale}
	Consider system \eqref{systemlemma} with the hypotheses presented in this section. There are $\varepsilon_1 \in (0,\varepsilon_0)$ and families of continuous functions $f_\e:\mathbb{R}\times\mathbb{R} \to \mathbb{R}^m$ and $g_\e:\mathbb{R}\times\mathbb{R}\to \mathbb{R}^n$ such that, for each $\varepsilon \in (0,\varepsilon_1]$, the autonomous system
	\begin{equation} \label{systemlemmaautonomized}
		\begin{aligned}
			&t'=1, \\
			&\theta' = 1 + \zeta_0(t,\theta,y,z,\varepsilon), \\
			&y' = H_1\cdot y + \zeta_1(t,\theta,y,z,\varepsilon), \\
			&z' = H_2 \cdot z + \zeta_2(t,\theta,y,z,\varepsilon),
		\end{aligned}
	\end{equation}
	has an invariant manifold given by $y = f_\e(t,\theta)$, $z=g_\e(t,\theta)$. The families $f_\e$ and $g_\e$ also satisfy:
	\begin{enumerate}[label=\alph*)]
		\item \label{thesisD} There is a continuous function $D:[0,\varepsilon_1] \to \mathbb{R}_+$ such that $D(0)=0$
		and \[
		\begin{aligned}
			&\|f_\e(t,\theta)\|\leq D(\varepsilon),
			&\|g_\e(t,\theta)\|\leq D(\varepsilon)
		\end{aligned}
		\] for all $(t,\theta,\varepsilon) \in \mathbb{R}\times\mathbb{R} \times (0,\varepsilon_1]$. Furthermore, there is a constant $C_H>0$, depending only on the matrices $H_1$ and $H_2$, such that $D(\e) = C_H M(\e)$, where $M(\e)$ is the function appearing in hypothesis $ii)$.
		\item \label{thesisdelta} There is a continuous function $\Delta:[0,\varepsilon_1] \to \mathbb{R}_+$ such that $\Delta(0)=0$ and 
		\[
		\begin{aligned}
			&\|f_\e(t,\theta_1) - f_\e(t,\theta_2)\|\leq \Delta(\varepsilon) |\theta_1-\theta_2|,\\
			&\|g_\e(t,\theta_1) - g_\e(t,\theta_2)\|\leq \Delta(\varepsilon) |\theta_1-\theta_2|
		\end{aligned}
		\] for all $t \in \mathbb{R}$, all $\theta_1,\theta_2 \in \mathbb{R}$ and all $\varepsilon \in (0,\varepsilon_1]$.
		\item \label{thesisperiodic} $f_\e$ is $\omega$-periodic in $\theta$ for all $\e \in (0,\e_1]$ and $g_\e$ satisfies $g_\e(t,\theta+\omega) = -g_\e(t,\theta)$ for all $(t,\theta,\e) \in \R \times \R \times (0,\e_1]$;
		\item \label{thesisperiodicT} If, for a given $\e \in (0,\e_1]$, the functions $ \zeta_0(t,\theta,y,z,\e)$, $\zeta_1(t,\theta,y,z,\e)$, and $\zeta_2(t,\theta,y,z,\e)$ are $T_\e$-periodic in the variable $t$, then so are $f_\e$ and $g_\e$;
		\item \label{thesisderivatives} $f_\e$ and $g_\e$ have bounded and uniformly continuous derivatives with respect to $\theta$ up to the $p$-th order for all $\e \in (0,\e_1]$;
		\item \label{thesisstability}Let $\pi_1: \R^m \times \R^n \to \R^m$ and $\pi_2: \R^m \times \R^n \to \R^n$ be the canonical projections. If $m_s\leq m$ and $n_s\leq n$ of the eigenvalues of $H_1$ and $H_2$, respectively have negative real parts, there are positive constants $r$, $\lambda$, $C$, $\sigma_0$, and $\sigma_1$ such that $r\leq\sigma_0$, $D(\e)<\sigma_0<\sigma_1$, and, for each $(t_0,\theta_0,\varepsilon) \in \mathbb{R} \times \mathbb{R} \times (0,\varepsilon_1]$, there is in $\bar{B}_m(0,\sigma_0) \times \bar{B}_n(0,\sigma_0)$ a local $(m_s+n_s)$-dimensional embedded submanifold $S(t_0,\theta_0,\e)$ of $\R^m \times \R^n$, containing the point $(f_\e(t_0,\theta_0),g_\e(t_0,\theta_0))$, and having the following properties:
		\begin{enumerate}[label= f.\arabic*)]
			\item \label{thesisescape} If $(y_0,z_0) \in \bar{B}_m(0,\sigma_0) \times \bar{B}_n(0,\sigma_0) \setminus S(t_0,\theta_0,\varepsilon)$, there is $t_*>t_0$ for which $$(y(t_*,t_0,\theta_0,y_0,z_0,\e),z(t_*,t_0,\theta_0,y_0,z_0,\e)) \notin \bar{B}_m(0,\sigma_1) \times \bar{B}_n(0,\sigma_1).$$
			\item \label{thesisdecay} Reciprocally, if $(y_0,z_0) \in S(t_0,\theta_0,\varepsilon)$, then, for all $t\geq t_0$, $$(y(t,t_0,\theta_0,y_0,z_0,\e),z(t,t_0,\theta_0,y_0,z_0,\e)) \in \bar{B}_m(0,\sigma_1) \times \bar{B}_n(0,\sigma_1)$$ and the following inequalities hold:
			$$\begin{aligned}&\|y(t,t_0,\theta_0,y_0,z_0,\e) - f_\e(t,\theta(t,t_0,\theta_0,f_\e(t_0,\theta_0),g_\e(t_0,\theta_0),\e))\|\leq C e^{-\lambda (t-t_0)} \|y_0-f_\e(t_0,\theta_0)\|, \\
			&\|z(t,t_0,\theta_0,y_0,z_0,\e) - g_\e(t,\theta(t,t_0,\theta_0,f_\e(t_0,\theta_0),g_\e(t_0,\theta_0),\e))\|\leq C e^{-\lambda (t-t_0)} \|z_0-g_\e(t_0,\theta_0)\|.\end{aligned}$$
			\item\label{thesisphi}There is a continuous function $\phi^\e_1: \mathbb{R} \times \mathbb{R} \times \bar{B}_{m_s}(0,r) \times \bar{B}_{n_s}(0,r) \to \bar{B}_{m-m_s}(0,\sigma_0)$ such that \[\pi_1(S(t_0,\theta_0,\varepsilon)) = \{(\phi^\e_1(t_0,\theta_0, \xi_1,\xi_2),\xi_1) :(\xi_1,\xi_2) \in \bar{B}_{m_s}(0,r) \times \bar{B}_{n_s}(0,r)\}.\] Similarly, there is a continuous function $\phi^\e_2: \mathbb{R} \times \mathbb{R} \times \bar{B}_{m_s}(0,r) \times \bar{B}_{n_s}(0,r)  \to \bar{B}_{n-n_s}(0,\sigma_0)$ such that $$\pi_2(S(t_0,\theta_0,\varepsilon)) = \{(\phi^\e_2(t_0,\theta_0,\xi_1, \xi_2),\xi_2) :(\xi_1,\xi_2) \in \bar{B}_{m_s}(0,r) \times \bar{B}_{n_s}(0,r)\}.$$ 
			\item \label{thesisphiperiodic} The functions $\phi_1^\e$ and $\phi_2^\e$ satisfy
			\[
				\phi_1^\e(t_0,\theta_0+\omega,\xi_1,\xi_2) = \phi_1^\e(t_0,\theta_0,\xi_1,-\xi_2)
			\]
			and 
			\[
				\phi_2^\e(t_0,\theta_0+\omega,\xi_1,\xi_2) = 	-\phi_2^\e(t_0,\theta_0,\xi_1,-\xi_2)
			\]
			for all $(t_0,\theta_0,\xi_1,\xi_2) \in \mathbb{R} \times \mathbb{R} \times \bar{B}_{m_s}(0,r) \times \bar{B}_{n_s}(0,r)$.
		\end{enumerate}
	\end{enumerate}
\end{lemma}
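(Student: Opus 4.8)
The plan is to construct the invariant manifold $y=f_\e(t,\theta)$, $z=g_\e(t,\theta)$ as a fixed point of an integral operator, following the classical Hale-Bogoliubov-Mitropolsky scheme, and then to extract the quantitative, regularity, symmetry, and stability refinements from uniform estimates built into that construction. First I would pass to the autonomized system \eqref{systemlemmaautonomized} and, using hypothesis iv), split both $\R^m$ and $\R^n$ into stable and unstable subspaces for $H_1$ and $H_2$; let $Y_1(t),Y_2(t)$ denote the corresponding matrix exponentials and form the \emph{Green's functions} $G_1(t,s),G_2(t,s)$ that decay exponentially as $|t-s|\to\infty$ (projecting onto the stable part for $t>s$ and onto the unstable part for $t<s$). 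One then seeks $(f_\e,g_\e)$ satisfying the fixed-point equations
\[
\begin{aligned}
f_\e(t,\theta) &= \int_{-\infty}^{\infty} G_1(t,s)\,\zeta_1\big(s,\Theta(s),f_\e(s,\Theta(s)),g_\e(s,\Theta(s)),\e\big)\,ds,\\
g_\e(t,\theta) &= \int_{-\infty}^{\infty} G_2(t,s)\,\zeta_2\big(s,\Theta(s),f_\e(s,\Theta(s)),g_\e(s,\Theta(s)),\e\big)\,ds,
\end{aligned}
\]
where $\Theta(s)=\Theta(s;t,\theta)$ is the solution of $\theta'=1+\zeta_0$ through $(t,\theta)$ evaluated along the candidate manifold. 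I would set this up on the Banach space of bounded continuous functions $(f,g):\R\times\R\to\R^m\times\R^n$ with $\|f\|_\infty,\|g\|_\infty\le D$ and Lipschitz constant in $\theta$ bounded by $\Delta$, where $D$ and $\Delta$ are to be chosen; the key point is that hypothesis ii) gives $\|\zeta_i(\cdot,0,0,\e)\|\le M(\e)$ and hypothesis iii) gives a Lipschitz constant $L(\e,\sigma,\mu)$ vanishing as the arguments go to zero, so that for $\e_1$ small the operator maps this ball into itself and is a contraction. This yields $f_\e,g_\e$ together with the bound \ref{thesisD} with $C_H$ coming from $\int|G_i(t,s)|\,ds$ (depending only on $H_1,H_2$) and $D(\e)=C_H M(\e)$; differentiating the fixed-point equation in $\theta$ (and using that $\Theta$ depends Lipschitz-continuously on $\theta$) and again invoking the smallness of $L$ gives \ref{thesisdelta}.

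Once the manifold exists, items \ref{thesisperiodic}--\ref{thesisderivatives} follow by uniqueness arguments applied to the fixed-point equation. For \ref{thesisperiodicT}: if $\zeta_i$ is $T_\e$-periodic in $t$, then $(t,\theta)\mapsto(f_\e(t+T_\e,\theta),g_\e(t+T_\e,\theta))$ is again a fixed point in the same ball, hence equals $(f_\e,g_\e)$ by the contraction's uniqueness. For \ref{thesisperiodic}: hypothesis i) says that the substitution $(\theta,z)\mapsto(\theta+\omega,-z)$ is a symmetry of the vector field; one checks it transforms a fixed point $(f_\e(t,\theta),g_\e(t,\theta))$ into $(f_\e(t,\theta-\omega),-g_\e(t,\theta-\omega))$, which is therefore also a fixed point, forcing $f_\e(t,\theta+\omega)=f_\e(t,\theta)$ and $g_\e(t,\theta+\omega)=-g_\e(t,\theta)$. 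For \ref{thesisderivatives}, the standard route is to run the same contraction on the space of $C^k$ functions with bounded derivatives, $k=1,\dots,p$, using that $\zeta_i$ has Lipschitz partials up to order $p$; the formal derivatives of the fixed-point equation are themselves fixed-point equations for linear operators with the same small contraction constant, so boundedness and uniform continuity propagate. (This is where one must be slightly careful: the $\theta$-flow $\Theta(s;t,\theta)$ depends on the manifold, so the derivative equations are coupled, but the Lipschitz-in-$\theta$ bound \ref{thesisdelta} already tamed that coupling, and it iterates.)

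The main obstacle — and the bulk of the work — is item \ref{thesisstability}, the local stable manifold $S(t_0,\theta_0,\e)$ of the invariant torus. Here I would change coordinates to center the manifold at the origin (setting $\bar y=y-f_\e,\ \bar z=z-g_\e$ along the $\theta$-flow on $M_\e$), so that the linear part is again governed by $H_1,H_2$ with the stated $m_s,n_s$ negative-real-part eigenvalues, and the nonlinearity is small and Lipschitz-small by \ref{thesisD},\ref{thesisdelta} and hypothesis iii). Then a second Lyapunov-Perron fixed-point argument, now on the space of decaying solutions (those with $\sup_{t\ge t_0} e^{\lambda(t-t_0)}\|(\bar y(t),\bar z(t))\|<\infty$ for suitable $\lambda$ below the spectral gap), produces, for each prescribed stable initial data $(\xi_1,\xi_2)\in\bar B_{m_s}(0,r)\times\bar B_{n_s}(0,r)$, a unique orbit staying in $\bar B_{m}(0,\sigma_1)\times\bar B_n(0,\sigma_1)$; its initial point defines the graph maps $\phi_1^\e,\phi_2^\e$ of \ref{thesisphi}, and its exponential decay is exactly \ref{thesisdecay}. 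The complementary statement \ref{thesisescape} — that anything off $S$ must leave $\bar B_{\sigma_1}$ — comes from a standard cone/Ważewski-type argument exploiting that the unstable directions dominate once the nonlinearity is small on $\bar B_{\sigma_1}$; one picks $\sigma_0\ll\sigma_1$ and $r$ so that the local graph stays inside $\bar B_{m}(0,\sigma_0)\times\bar B_n(0,\sigma_0)$ and $D(\e)<\sigma_0$. Finally, \ref{thesisphiperiodic} is obtained exactly as \ref{thesisperiodic}: the symmetry $(\theta,z)\mapsto(\theta+\omega,-z)$ of hypothesis i) maps the decaying-orbit problem at $(t_0,\theta_0)$ to the one at $(t_0,\theta_0+\omega)$ with $\xi_2\mapsto-\xi_2$, and uniqueness of the Lyapunov-Perron fixed point transfers the symmetry to the graph maps $\phi_1^\e,\phi_2^\e$. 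Throughout, the one genuinely delicate bookkeeping issue is keeping every constant ($D,\Delta,C_H,\lambda,C,\sigma_0,\sigma_1,r$) uniform in $(t_0,\theta_0)$ and controlled as $\e\to0$ — this is what makes the later global gluing of the $S(t_0,\theta_0,\e)$ into the stable manifold of $M_\e$ in Theorem \ref{maintheorem} possible — but it is forced by the hypotheses rather than requiring any new idea.
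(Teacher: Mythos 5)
Your proposal follows essentially the same route as the paper: a Lyapunov--Perron fixed-point construction of $(f_\e,g_\e)$ using the exponentially decaying Green's kernels $J_1,J_2$ built from the hyperbolic splitting of $H_1,H_2$, the $\theta$-flow $T_{F,G}$ along the candidate graph, the balls of bounded Lipschitz functions contracted for small $\e$, and a second Lyapunov--Perron scheme on forward-decaying orbits for the local stable manifold of statement f). The only cosmetic difference is that the paper builds the $\omega$-periodicity of $f_\e$ and $\omega$-antiperiodicity of $g_\e$ directly into the underlying function spaces $\mathcal{P}_\omega(D,\Delta)\times\mathcal{A}_\omega(D,\Delta)$, whereas you propose to derive these symmetries a posteriori from uniqueness of the fixed point under the transformation $(\theta,z)\mapsto(\theta-\omega,-z)$; the two devices are interchangeable and yield the same result.
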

	\begin{proof}
		The argument is very similar to the one found in \cite[Lemmas 2.1, 2.2, and 2.3]{haleinvariant} (see also \cite[Section 28, Lemmas 1,2, and 3]{bogo}). We will omit computations when analogous ones can be found in those references, simply referring the reader to them. 
		
		Without loss of generality, suppose that $H_1=\text{diag}(H_1^+,H_1^-)$ and $H_2=\text{diag}(H_2^+,H_2^-)$, with the eigenvalues of $H_i^+$ and $H_i^-$, $i \in \{1,2\}$, having respectively positive and negative real parts. For each $i \in \{1,2\}$, define
		\[
		J_i(t) = \left\{ 
		\begin{array}{l}
			-\left[\begin{array}{cc}
				e^{-t H_i^+} &0 \\
				0 & 0
			\end{array}\right],\quad t>0; \\[2\normalbaselineskip]
			\quad\left[\begin{array}{cc}
				0 &0 \\
				0 & e^{-t H_i^-}
			\end{array}\right], \quad t<0.
		\end{array} \right.
		\]
		
	Consider the complete metric space $\mathcal{P}_\omega(D,\Delta)$ of continuous functions $F \in C(\R^2;\R^m)$ satisfying:
		\begin{itemize}
			\item $F(t,\theta+\omega) = F(t,\theta)$ for all $(t,\theta) \in \R \times \R$;
			\item $\|F(t,\theta)\|\leq D$ for all $(t,\theta) \in \R \times \R$;
			\item $\|F(t,\theta_1) - F(t,\theta_2)\|\leq \Delta |\theta_2-\theta_1|$ for all $(t,\theta_1),(t,\theta_2) \in \R \times \R$,
		\end{itemize}
	where the metric is given by the uniform norm.
	Similarly, consider the complete metric space $\mathcal{A}_\omega(D,\Delta)$ of continuous functions $G \in C(\R^2;\R^n)$ satisfying:
	\begin{itemize}
		\item $G(t,\theta+\omega) = -G(t,\theta)$ for all $(t,\theta) \in \R \times \R$;
		\item $\|G(t,\theta)\|\leq D$ for all $(t,\theta) \in \R \times \R$;
		\item $\|G(t,\theta_1) - G(t,\theta_2)\|\leq \Delta |\theta_2-\theta_1|$ for all $(t,\theta_1),(t,\theta_2) \in \R \times \R$.
	\end{itemize}

	For each $(F,G) \in \mathcal{P}_\omega(D,\Delta) \times \mathcal{A}_\omega(D,\Delta)$, let $T_{F,G}(t,t_0,\theta_0,\e)$ denote the solution of 
	\[\theta'= 1+ \zeta_0(t,\theta,F(t,\theta),G(t,\theta),\e)\]
	satisfying $T_{F,G}(t_0,t_0,\theta_0,\e) = \theta_0$. Since $T_{F,G}(t,t_0,\theta_0+\omega,\e)$ and $T_{F,G}(t,t_0,\theta_0,\e)+\omega$ are both solutions of the same initial problem, it follows that 
	\begin{equation}\label{eq:Ttheta+omega}
		T_{F,G}(t,t_0,\theta_0+\omega,\e)=T_{F,G}(t,t_0,\theta_0,\e)+\omega.
	\end{equation}
	
	For each $\e \in (0,\e_0]$, define the function $S^\e(F,G) = (S_1^\e(F,G),S_2^\e(F,G)) \in C(\R^2;\R^m) \times C(\R^2;\R^n)$, acting on the metric space $\mathcal{P}_\omega(D,\Delta) \times \mathcal{A}_\omega(D,\Delta)$ and given by
	\begin{flalign*}
		S_1^\e(F,G)(t,\theta) = \int_{-\infty}^\infty J_1(x) \zeta_1 (t+x,T_{F,G}(t+x,t,\theta,\e),F&(t+x,T_{F,G}(t+x,t,\theta,\e)),G(t+x,T_{F,G}(t+x,t,\theta,\e)),\e) dx, \\
		S_2^\e(F,G)(t,\theta) = \int_{-\infty}^\infty J_2(x) \zeta_2 (t+x,T_{F,G}(t+x,t,\theta,\e),F&(t+x,T_{F,G}(t+x,t,\theta,\e)),G(t+x,T_{F,G}(t+x,t,\theta,\e)),\e) dx.
	\end{flalign*}
	By performing the change of variable of integration $\tau=x+t$ and differentiating the compositions $F(t,T_{F,G}(t,t_0,\theta_0,\e))$ and $G(t,T_{F,G}(t,t_0,\theta_0,\e))$ with respect to $t$, it is easy to see that, if $(f_\e,g_\e)$ is a fixed-point of $S^\e$, then the manifold given by $y=f_\e(t,\theta)$, $z=g_\e(t,\theta)$ is invariant under \eqref{systemlemmaautonomized}. Thus, the problem is reduced to proving that $S^\e$ admits a fixed-point.
	
	Following the arguments found in \cite[Lemma 2.1]{haleinvariant}, we conclude that it is possible to find $\e_1 \in (0,\e_0]$ and define $D(\e)$ and $\Delta(\e)$ such that, for $\e \in (0,\e_1]$, the function $S^\e$ is a contraction of $\mathcal{P}_\omega(D(\e),\Delta(\e)) \times \mathcal{A}_\omega(D(\e),\Delta(\e))$ into itself. The only change compared to the arguments found in the reference is that, in order to guarantee that $S_2^\e(F,G) \in \mathcal{A}(D(\e),\Delta(\e))$, we need to show that $S_2^\e(F,G)$ satisfies $S_2^\e(F,G)(t,\theta+\omega) = -S_2^\e(F,G)(t,\theta)$. However, this is easily seen by considering \eqref{eq:Ttheta+omega} and the properties of functions $\zeta_1$ and $\zeta_2$. Furthermore, since in \cite{haleinvariant} it is proved that $\Delta(\e) \to 0$ as $\e \to 0$ and that there is $C_H>0$ such that $D(\e) = C_H M(\e)$, properties $\ref{thesisD}$ and $\ref{thesisdelta}$ are ensured to hold.
	
	It remains to show that properties $\ref{thesisperiodic}$ to $\ref{thesisstability}$ are valid. Observe that property $\ref{thesisperiodic}$ follows directly from the fact that $(f_\e,g_\e) \in \mathcal{P}_\omega(D(\e),\Delta(\e)) \times \mathcal{A}_\omega(D(\e),\Delta(\e))$. Property $\ref{thesisperiodicT}$ follows from the same argument found in \cite[Lemma 2.2]{haleinvariant}. Property $\ref{thesisderivatives}$ is not directly discussed in \cite{haleinvariant}, but it is stated and proved in \cite[Section 28, Lemma 1]{bogo}. The proof in our case is essentially the same.
	
	Finally, we proceed to discussing property $\ref{thesisstability}$. For $t_0,\sigma_0,\nu \in \R$ and $k \in \mathbb{N}$, let $\mathcal{C}^{t_0}_{k}(\sigma_0,\nu)$ br the complete metric space of continuous functions $W: [t_0,\infty) \times \R \to \R^k$ satisfying:
	\begin{itemize}
		\item $\|W(t,\theta)\| \leq \sigma_0$ for all $(t,\theta) \in [t_0,\infty) \times \R$;
		\item $\|W(t,\theta_1) - W(t,\theta_2)\| \leq \nu |\theta_1 -\theta_2|$ for all $(t,\theta_1),(t,\theta_2) \in[t_0,\infty) \times \R$.
	\end{itemize}
	The metric of $\mathcal{C}^{t_0}_{k}(\sigma_0,\nu)$ is given by the uniform norm.
	 
	Let $t_0,\sigma_0,\nu \in \R$ be given. For each $b_1 \in \R^m$ and each $b_2 \in \R^n$, define the following functions acting on $\mathcal{C}^{t_0}_m(\sigma_0,\nu)$ and $\mathcal{C}^{t_0}_n( \sigma_0,\nu)$, respectively:
	\begin{flalign*}
		S_1^{\e,b_1}(W_1,W_2)(t,\theta) = J_1&(t_0-t)b_1\\ + &\int_{t_0}^\infty J_1(\tau-t) \zeta_1 (\tau,T_{W_1,W_2}(\tau,t,\theta,\e),W_1(\tau,T_{W_1,W_2}(\tau,t,\theta,\e)), W_2(\tau,T_{W_1,W_2}(\tau,t,\theta,\e)),\e) d\tau, \\
		S_2^{\e,b_2}(W_1,W_2)(t,\theta) = J_2&(t_0-t)b_2\\ + &\int_{t_0}^\infty J_2(\tau-t) \zeta_2 (\tau,T_{W_1,W_2}(\tau,t,\theta,\e),W_1(\tau,T_{W_1,W_2}(\tau,t,\theta,\e)), W_2(\tau,T_{W_1,W_2}(\tau,t,\theta,\e)),\e) d\tau.
	\end{flalign*}
	
	Let $S^{\e,b_1,b_2}$ act on $\mathcal{C}^{t_0}_m(\sigma_0,\nu) \times \mathcal{C}^{t_0}_n( \sigma_0,\nu)$ by $S^{\e,b_1,b_2}(W_1,W_2) = (S^{\e,b_1}_1(W_1,W_2),S^{\e,b_2}_2(W_1,W_2))$. Then, following the same procedure as before, we can ensure that, by taking $\e_1$, $\sigma_0$, $\nu$, and $r<\sigma_0$ sufficiently small, $S^{\e,b_1,b_2}$ becomes a contraction of $\mathcal{C}^{t_0}_m(\sigma_0,\nu) \times \mathcal{C}^{t_0}_n( \sigma_0,\nu)$ into itself if $\|b_1\|,\|b_2\|\leq r$.
	
	Define $\Psi_1^{\e,t_0}$ and $\Psi_2^{\e,t_0}$ to be such that $(t,\theta) \mapsto (\Psi_1^{\e,t_0}(t,\theta,b_1,b_2),\Psi_2^{\e,t_0}(t,\theta,b_1,b_2))$ is the fixed point of the operator $S^{\e,b_1,b_2}$. Then, it is easy to see that there is $C_0>0$ such that
	\begin{equation} \label{eq:lemma1eq1}
	\|\Psi_i^{\e,t_0} (t,\theta,b_1,b_2) - \Psi_i^{\e,t_0} (t,\tilde{\theta},\tilde{b}_1,\tilde{b}_2)\| \leq C_0 e^{\frac{-\alpha}{2}(t-t_0)} \left(\|b_1 - \tilde{b}_1\| + \|b_2 - \tilde{b}_2\| \right) + \nu |\theta - \tilde{\theta}|
	\end{equation}
	for $i \in \{1,2\}$, $t, \theta,\tilde{\theta} \in \R$, $b_1,\tilde{b}_1 \in \bar{B}_m(0,r)$, and $b_2,\tilde{b}_2 \in \bar{B}_n(0,r)$. This ensures, in particular, that, for $i \in\{1,2\}$, $\Psi_i^{\e,t_0}$ is continuous if seen as a function on $\R \times \R \times \bar{B}_m(0,r) \times \bar{B}_n(0,r)$.
	
	Following the argument in \cite[Section 28, Lemma 3]{bogo}, we can prove that, if $\sigma_1\geq\sigma_0$, every solution of \eqref{systemlemmaautonomized} satisfying
	\begin{itemize}
		\item $y_0 \in \bar{B}_m(0,\sigma_0)$ and $y(t,t_0,\theta_0,y_0,z_0,\e) \in \bar{B}_m(0,\sigma_1)$; 
		\item $z_0 \in \bar{B}_n(0,\sigma_0)$ and $z(t,t_0,\theta_0,y_0,z_0,\e) \in \bar{B}_n(0,\sigma_1)$
	\end{itemize} 
	must be of the form $(t,\theta(t),\Psi_1^\e(t,\theta(t),b_1,b_2),\Psi_2^\e(t,\theta(t),b_1,b_2))$ for some $(b_1,b_2) \in B_m(0,r) \times B_n(0,r)$, where $\theta(t)$ denotes $T_{\Psi_1^\e,\Psi_2^\e}(t,t_0,\theta_0,\e)$. Conversely, every solution of the form given above clearly satisfies the two conditions set forth. Therefore, define
	\[
		S(t_0,\theta_0,\e): = \left\{\lim_{t \to t_0^+} \left(\Psi_1^{\e,t_0}(t,\theta_0,b_1,b_2),\Psi_2^{\e,t_0}(t,\theta_0,b_1,b_2)\right): (b_1,b_2) \in \bar{B}_m(0,r) \times \bar{B}_n(0,r)\right\}.
	\]
	Then, considering also \eqref{eq:lemma1eq1}, properties $\ref{thesisescape}$ and $\ref{thesisdecay}$ follow immediately.
	
	Observe that, from the definition of the operators $S_1^{\e,b_1}$ and $S_2^{\e,b_2}$, it follows that the last $m_s$ and $n_s$ elements of the vectors $\Psi_1^{\e,t_0}(t_0^+,\theta,b_1,b_2)$ and $\Psi_2^{\e,t_0}(t_0^+,\theta,b_1,b_2)$ coincide with, respectively, the last $m_s$ and $n_s$ elements of the vectors $b_1$ and $b_2$. Thus, for each $i \in \{1,2\}$, define $\phi_i^\e$ by
	\[
	(\phi_i^\e(t_0,\theta_0,\xi_1,\xi_2),\xi_i) = \lim_{t \to t_0^+}\Psi_i^{\e,t_0}(t,\theta_0,(0,\xi_1),(0,\xi_2)).
	\]
	Then, it is clear that $\phi_i^\e$ is continuous. It is also clear from this definition that property $\ref{thesisphi}$ holds. 
	
	Finally, since the change of variables $(t,\theta,y,z) \to (\tilde{t},\tilde{\theta},\tilde{y},\tilde{z}) = (t,\theta-\omega,y,-z)$ carries system \eqref{systemlemmaautonomized} into an identical system, it follows from the already proved properties $\ref{thesisescape}$ and $\ref{thesisdecay}$ that: a point $(y_*,z_*) \in \R^m \times \R^n$ is in $S(t_0,\theta_0+\omega,\e)$ if, and only if, $(y_*,-z_*) \in S(t_0,\theta_0,\e)$.  Hence, it follows that, for each $(\xi_1,\xi_2) \in \bar{B}_{m_s}(0,r) \times \bar{B}_{n_s}(0,r)$, there is $(\tilde{\xi}_1,\tilde{\xi}_2) \in \bar{B}_{m_s}(0,r) \times \bar{B}_{n_s}(0,r)$ such that 
	\[
		(\phi_1^\e(t_0,\theta_0+\omega,\xi_1,\xi_2),\xi_1) =(\phi_1^\e(t_0,\theta_0,\tilde{\xi}_1,\tilde{\xi}_2),\tilde{\xi}_1),
	\]
	and 
	\[
	(\phi_2^\e(t_0,\theta_0+\omega,\xi_1,\xi_2),\xi_2) = - (\phi_2^\e(t_0,\theta_0,\tilde{\xi}_1,\tilde{\xi}_2),\tilde{\xi}_2).
	\]
	Therefore, $\xi_1=\tilde{\xi}_1$ and $\xi_2=-\tilde{\xi}_2$, and property $\ref{thesisphiperiodic}$ follows. This concludes the proof of the Lemma.
	\end{proof}
	
	The following Corollary addresses the issue of uniqueness of the invariant manifold found in the previous Lemma. Its proof will not be presented here, but it follows essentially from the stability property provided by statement $\ref{thesisstability}$ (see, for instance, \cite[Remark 2.2]{haleinvariant} and, for more details,  \cite[Remark of page 494]{bogo}).
	\begin{corollary}\label{corollaryunique}
		 For each $\e \in(0,\e_1]$, the invariant manifold given by $y=f_\e(t,\theta)$, $z=g_\e(t,\theta)$ is unique in $\R \times \R \times B_m(0,\sigma_0) \times B_n(0,\sigma_0)$, that is, every invariant manifold contained in $\R \times \R \times B_m(0,\sigma_0) \times B_n(0,\sigma_0)$ must be contained in the set given by $y=f_\e(t,\theta)$, $z=g_\e(t,\theta)$.
	\end{corollary}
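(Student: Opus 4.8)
The plan is to exploit the local stability package of statement~f) of Lemma~\ref{lemmahale}, run in both forward and backward time, to prove that \emph{every} orbit of \eqref{systemlemmaautonomized} that stays in $\R\times\R\times B_m(0,\sigma_0)\times B_n(0,\sigma_0)$ for all $t\in\R$ must lie on $\CM:=\{y=f_\e(t,\theta),\,z=g_\e(t,\theta)\}$; the corollary then follows immediately, since each point of an invariant manifold contained in that region carries such an orbit. Write $\bar B:=\bar B_m(0,\sigma_0)\times\bar B_n(0,\sigma_0)$ and $\bar B':=\bar B_m(0,\sigma_1)\times\bar B_n(0,\sigma_1)$, so $\bar B\subset\bar B'$. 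Fix $(t_0,\theta_0,y_0,z_0)$ on such an invariant manifold and let $(\theta(t),y(t),z(t))$ be the orbit through it; by invariance $(y(t),z(t))\in\bar B\subset\bar B'$ for every $t\in\R$.

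\textbf{Forward estimate.} For each $t_1\in\R$, the forward orbit based at $(t_1,\theta(t_1),y(t_1),z(t_1))$ never leaves $\bar B'$, so property~\ref{thesisescape}, read contrapositively, forces $(y(t_1),z(t_1))\in S(t_1,\theta(t_1),\e)$; property~\ref{thesisdecay} then gives
\[
\|y(t)-f_\e(t,\hat\theta_{t_1}(t))\|\le Ce^{-\lambda(t-t_1)}\,\|y(t_1)-f_\e(t_1,\theta(t_1))\|,\qquad t\ge t_1,
\]
together with the analogue for $z$ and $g_\e$, where $\hat\theta_{t_1}$ denotes the $\theta$-flow of \eqref{systemlemmaautonomized} restricted to $\CM$ issuing from $(t_1,\theta(t_1))$.

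\textbf{Backward estimate.} Reversing time and flipping $(t,\theta)\mapsto(-t,-\theta)$ turns \eqref{systemlemmaautonomized} into a system of exactly the same form, with $H_1,H_2$ replaced by $-H_1,-H_2$ and the $\zeta_i$ suitably modified; it still obeys hypotheses $i)$--$iv)$ --- hypothesis $i)$ survives because $\zeta_0,\zeta_1,\zeta_2$ are $2\omega$-periodic in $\theta$ (a consequence of $i)$), and $iv)$ because $-H_1,-H_2$ still have hyperbolic spectra. Applying Lemma~\ref{lemmahale} to this reversed system --- whose local stable set at $(t_0,\theta_0)$ is the local unstable set of \eqref{systemlemmaautonomized} --- and using that the backward orbit of $(t_0,\theta_0,y_0,z_0)$ likewise stays in $\bar B'$, one obtains constants $C',\lambda'>0$ with
\[
\|y(t_1)-f_\e(t_1,\theta(t_1))\|\le C'e^{-\lambda'(t_0-t_1)},\qquad t_1\le t_0,
\]
and the same for $z$; that is, the orbit is forward and backward asymptotic to $\CM$.

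\textbf{Conclusion and main obstacle.} Feeding the backward bound into the forward estimate at $t=t_0$ gives $\|y_0-f_\e(t_0,\hat\theta_{t_1}(t_0))\|\le CC'e^{-(\lambda+\lambda')(t_0-t_1)}\to0$ as $t_1\to-\infty$, and likewise with $z_0,g_\e$. To identify the limit with $(f_\e(t_0,\theta_0),g_\e(t_0,\theta_0))$ rather than another point of $\CM$ over $t=t_0$, I would show $\hat\theta_{t_1}(t_0)\to\theta_0$: setting $\delta(t)=|\hat\theta_{t_1}(t)-\theta(t)|$ one has $\delta(t_1)=0$ and, by hypothesis $iii)$ together with the two estimates above, $\delta'(t)\le L\,\delta(t)+L\,C_*\,e^{-\lambda'(t_0-t_1)}e^{-\lambda(t-t_1)}$ on $[t_1,t_0]$ for a suitable $C_*$, whence Gronwall yields $\delta(t_0)\le(\mathrm{const})\,e^{(L-\lambda')(t_0-t_1)}\to0$ \emph{provided} $L<\lambda'$. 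Granting this, continuity of $f_\e(t_0,\cdot)$ and $g_\e(t_0,\cdot)$ forces $y_0=f_\e(t_0,\theta_0)$, $z_0=g_\e(t_0,\theta_0)$, so the point lies on $\CM$; as it was arbitrary, the invariant manifold is contained in $\CM$. I expect the main obstacle to be exactly the proviso $L(\e,\sigma_0,\sigma_0)<\lambda'$: one must shrink $\e_1$ and $\sigma_0$ further so that the at-most-exponential spreading produced by Gronwall on the (non-contracting) $\theta$-equation is dominated by the transverse contraction rate. This quantitative bookkeeping --- along with the routine verification that the time-reversed system inherits $i)$--$iv)$ and that its invariant manifold is the image of $\CM$ --- is precisely what is done in \cite[Remark on p.~494]{bogo} and \cite[Remark~2.2]{haleinvariant}, which is why the details are omitted.
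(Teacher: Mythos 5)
Your strategy --- place the orbit through $(t_0,\theta_0,y_0,z_0)$ in the local stable (resp.\ unstable) set via the contrapositive of \ref{thesisescape}, then exploit the decay estimate \ref{thesisdecay} --- is the right one and is essentially what \cite{haleinvariant,bogo} do. However, your backward estimate has a genuine gap. The time-reversed version of \ref{thesisdecay} bounds $\|y(t_1)-f_\e(t_1,\check\theta(t_1))\|$, where $\check\theta(t_1)=\theta(t_1,t_0,\theta_0,f_\e(t_0,\theta_0),g_\e(t_0,\theta_0),\e)$ is the \emph{comparison} $\theta$-orbit living on $\CM$, not the actual orbit's $\theta(t_1)$; it does not give $\|y(t_1)-f_\e(t_1,\theta(t_1))\|\le C'e^{-\lambda'(t_0-t_1)}$ as you wrote. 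The mismatch $|\theta(t_1)-\check\theta(t_1)|$ grows backward at rate $\sim L(\e)$, so converting to your quantity via the Lipschitz property \ref{thesisdelta} picks up a term $\sim\Delta(\e)\,e^{L(t_0-t_1)}$, and your final Gronwall for $\delta(t)=|\hat\theta_{t_1}(t)-\theta(t)|$ then produces a piece $\sim\Delta(\e)^2 e^{2L(t_0-t_1)}$ that blows up as $t_1\to-\infty$. The proviso $L<\lambda'$ alone does not cure this, so ``send $t_1\to-\infty$'' does not go through as written.

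The cure is to \emph{not} send $t_1\to-\infty$, and in fact to drop the backward/time-reversal step entirely. Set $v(t):=\|y(t)-f_\e(t,\theta(t))\|+\|z(t)-g_\e(t,\theta(t))\|$ and $\kappa:=\sup_{t\in\R}v(t)\le 4\sigma_0$, the latter because the orbit lies in $B_m(0,\sigma_0)\times B_n(0,\sigma_0)$ and $\|f_\e\|,\|g_\e\|\le D(\e)<\sigma_0$. Your forward estimate at base $t_1=t_0-M$ gives $\|y_0-f_\e(t_0,\hat\theta_{t_1}(t_0))\|+\|z_0-g_\e(t_0,\hat\theta_{t_1}(t_0))\|\le Ce^{-\lambda M}v(t_0-M)$, and the Gronwall you set up (with the forcing bounded by $LCv(t_0-M)e^{-\lambda(t-t_1)}$ rather than by a doubly-decaying quantity) gives $\delta(t_0)\le \frac{LC}{L+\lambda}e^{LM}v(t_0-M)$. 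Using \ref{thesisdelta} to compare at $\theta_0$ and $\hat\theta_{t_1}(t_0)$,
\[
v(t_0)\le\Big(Ce^{-\lambda M}+\frac{2\Delta(\e)L(\e)C}{L(\e)+\lambda}\,e^{L(\e)M}\Big)\,v(t_0-M)\le\Big(Ce^{-\lambda M}+\frac{2\Delta(\e)L(\e)C}{L(\e)+\lambda}\,e^{L(\e)M}\Big)\,\kappa.
\]
Fix $M$ with $Ce^{-\lambda M}<\tfrac12$, then shrink $\e_1$ (possible since $L(\e),\Delta(\e)\to 0$) so that the second coefficient is also $<\tfrac12$; taking the supremum over $t_0$ yields $\kappa<\kappa$ unless $\kappa=0$, so the orbit lies on $\CM$. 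This uses only the forward-time half of statement~f), and also disposes of the need to verify that the reversed system inherits hypotheses i)--iv) (a step which, incidentally, requires the full $z\mapsto-z$ symmetry of hypothesis~i), not merely $2\omega$-periodicity as you suggest).
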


The following Proposition is presented to address the issue of regularity of the invariant manifold whose existence was established in the previous Lemma. 
\begin{proposition} \label{corollaryregularity1}
	Consider system \eqref{systemlemma} with the hypotheses presented in this section. Suppose that, for each $\varepsilon \in (0,\varepsilon_1]$ and each $i \in \{0,1,2\}$ fixed, the functions $(t,\theta,y,z) \mapsto \zeta_i(t,\theta,y,z,\varepsilon)$ are of class $C^p$. Then, the invariant manifold found in Lemma \ref{lemmahale} above, that is, the manifold $M_\varepsilon=\{(t,\theta,f_\e(t,\theta),g_\e(t,\theta)) \in \R \times \R \times B_m(0,\sigma_0) \times B_n(0,\sigma_0): (t,\theta) \in \mathbb{R}\times\mathbb{R}\}$, is of class $C^p$. Moreover, for each $\varepsilon \in (0,\varepsilon_1]$, the functions $(t,\theta) \mapsto f_\e(t,\theta)$ and $(t,\theta) \mapsto g_\e(t,\theta)$ are also of class $C^p$.
\end{proposition}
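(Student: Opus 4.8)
The plan is to prove that, for each $\e\in(0,\e_1]$, the functions $(t,\theta)\mapsto f_\e(t,\theta)$ and $(t,\theta)\mapsto g_\e(t,\theta)$ are of class $C^p$; the assertion about $M_\e$ then follows immediately, since the graph of a $C^p$ map $\R^2\to\R^m\times\R^n$ is a $C^p$ embedded submanifold. Recall from the proof of Lemma \ref{lemmahale} that $(f_\e,g_\e)$ is the unique fixed point of the uniform contraction $S^\e=(S_1^\e,S_2^\e)$ on $\mathcal{P}_\omega(D(\e),\Delta(\e))\times\mathcal{A}_\omega(D(\e),\Delta(\e))$, and that statement \ref{thesisderivatives} of that Lemma already gives $C^p$ regularity \emph{in the variable $\theta$}. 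What remains is to upgrade this to joint regularity in $(t,\theta)$, which we do by a standard bootstrapping argument of fiber-contraction type, extending the reasoning of \cite[Section 28, Lemma 1]{bogo} so as to also control $t$-derivatives (this is where the stronger hypothesis, namely that $(t,\theta,y,z)\mapsto\zeta_i(t,\theta,y,z,\e)$ is $C^p$, enters).

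\textbf{First order.} Represent the candidate first derivative of a pair $(F,G)$ by a pair $(P,Q)$ of continuous matrix-valued functions on $\R^2$ of the appropriate sizes, bounded by a fixed constant $K$ and Lipschitz in $\theta$ with a fixed constant, these constants being dictated by the estimates below exactly as $D(\e),\Delta(\e)$ were in Lemma \ref{lemmahale}; let $\mathcal{Y}$ denote the resulting complete metric space, equipped with the uniform norm. Differentiating the integral expressions defining $S_i^\e$ formally with respect to $(t,\theta)$ — differentiation under the integral sign being legitimate thanks to the exponential decay of the kernels $J_i$ and the boundedness of the first partial derivatives of $\zeta_i$ (bounded by the Lipschitz function $L$ of hypothesis iii)) — and invoking the variational equation satisfied by $\partial_{(t,\theta)}T_{F,G}$ along $(F,G)$, one obtains a continuous map $\mathcal{D}S^\e:\big(\mathcal{P}_\omega\times\mathcal{A}_\omega\big)\times\mathcal{Y}\to\mathcal{Y}$, $\big((F,G),(P,Q)\big)\mapsto\mathcal{D}S^\e(F,G)(P,Q)$, which, whenever $(F,G)$ is of class $C^1$, satisfies $\mathcal{D}S^\e(F,G)\big(D(F,G)\big)=D\big(S^\e(F,G)\big)$. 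Exactly as in the contraction estimate of Lemma \ref{lemmahale}, every genuinely new term arising from the differentiation carries a factor bounded by $L(\e,\sigma,\mu)$ against the kernels $J_i$, the at-most-exponential growth of the variational solutions being dominated by the decay of $J_i$ once $L$ is small; hence, after possibly shrinking $\e_1$, for each fixed $(F,G)$ the map $(P,Q)\mapsto\mathcal{D}S^\e(F,G)(P,Q)$ is a contraction of $\mathcal{Y}$ into itself, with a contraction constant uniform in $(F,G)$. Thus the operator $\widehat{S}^\e\big((F,G),(P,Q)\big):=\big(S^\e(F,G),\mathcal{D}S^\e(F,G)(P,Q)\big)$ satisfies the hypotheses of the fiber contraction theorem and therefore admits a globally attracting fixed point $\big((f_\e,g_\e),(p_\e,q_\e)\big)$. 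Starting the iteration from any $C^1$ pair $(F_0,G_0)$ in the base space together with $(P_0,Q_0):=D(F_0,G_0)$, the identity displayed above shows inductively that the iterates $(F_j,G_j)=(S^\e)^j(F_0,G_0)$ are $C^1$ with $D(F_j,G_j)=(P_j,Q_j)$; since $(F_j,G_j)\to(f_\e,g_\e)$ and $(P_j,Q_j)\to(p_\e,q_\e)$ uniformly on $\R^2$, it follows that $f_\e,g_\e\in C^1$ with $D(f_\e,g_\e)=(p_\e,q_\e)$.

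\textbf{Higher order by induction.} Suppose $f_\e,g_\e\in C^k$ for some $1\le k<p$. Then the right-hand side of the equation $\theta'=1+\zeta_0(t,\theta,f_\e(t,\theta),g_\e(t,\theta),\e)$ is of class $C^k$, so by smooth dependence on initial conditions $T_{f_\e,g_\e}$ is $C^k$ in $(t,t_0,\theta_0)$. One now repeats the previous step one order higher: the $(k+1)$-st order formal derivative operator is obtained by differentiating the integral expressions $k+1$ times, which brings in partial derivatives of $\zeta_i$ up to order $k+1\le p$ (available by hypothesis) and derivatives of $T_{f_\e,g_\e}$ up to order $k+1$ governed by the higher variational equations, differentiation under the integral again being justified by the decay of $J_i$. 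The same smallness mechanism makes the corresponding fiber map a uniform contraction after a further shrinking of $\e_1$, and the fiber contraction theorem, applied along an iteration started from a $C^{k+1}$ seed, yields $f_\e,g_\e\in C^{k+1}$. After $p$ steps we conclude $f_\e,g_\e\in C^p$, whence $M_\e$ is a $C^p$ submanifold, as claimed.

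\textbf{Expected main obstacle.} All delicate points are of a technical, bookkeeping nature: justifying differentiation under the integral over the noncompact variable (handled, as in Lemma \ref{lemmahale}, by balancing the exponential decay of $J_i$ against the exponential growth of the variational solutions, whose rate is controlled by the small Lipschitz bound $L$); establishing the continuity of the fiber maps in the uniform topology, which rests on the Lipschitz dependence of $T_{F,G}$ and of its derivatives on $(F,G)$; and tracking, at each induction step, precisely how many derivatives of the $\zeta_i$ and of $T_{f_\e,g_\e}$ are consumed, so that the procedure terminates exactly at the regularity $C^p$ permitted by the hypothesis.
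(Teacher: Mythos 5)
The paper's proof takes a completely different route from yours. Rather than running a fiber contraction on the integral operator $S^\e$, it exploits the already–established statement \ref{thesisderivatives} of Lemma \ref{lemmahale}, which gives $C^p$ regularity of $\theta\mapsto(f_\e(0,\theta),g_\e(0,\theta))$ at the fixed time $t=0$, and propagates that regularity through the flow. Concretely, it defines $\alpha_\e(t,\theta):=\varphi_\e(t,0,\theta,f_\e(0,\theta),g_\e(0,\theta))$, where $\varphi_\e$ is the flow of \eqref{systemlemmaautonomized}, and uses that the flow of a $C^p$ vector field is $C^p$ in initial conditions; it then shows $\alpha_\e$ is an injective $C^p$ parametrization of $M_\e$, and, by checking that the "time–$\theta$" change of coordinates $h(\tilde t,\tilde\theta)=(\tilde t,\theta_\e(\tilde t,0,\tilde\theta,f_\e(0,\tilde\theta),g_\e(0,\tilde\theta)))$ has an invertible derivative (via a chain-rule identity forced by the invariance of $M_\e$ and the invertibility of the fundamental matrix of the first variational equation), concludes that $(f_\e,g_\e)$ is the composition of $C^p$ maps.

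Your fiber-contraction plan is a genuine alternative, but as written it has a real gap in the $t$-direction. The fiber operator $\mathcal{D}S^\e(F,G)(P,Q)$, obtained by differentiating $S^\e_i$ in $(t,\theta)$, contains "source" terms $\int J_i(x)\,\partial_t\zeta_i(t+x,\dots)\,dx$. You assert that the first partial derivatives of $\zeta_i$ are "bounded by the Lipschitz function $L$ of hypothesis iii)", but hypothesis iii) only controls the Lipschitz constant of $\zeta_i$ in the variables $(\theta,y,z)$ — it says nothing about $\partial_t\zeta_i$. The Proposition's added hypothesis ($\zeta_i$ of class $C^p$ in $(t,\theta,y,z)$) likewise does not make $\partial_t\zeta_i$ uniformly bounded on $\R\times\R\times B_m\times B_n$, so the integrals defining the fiber map need not converge, nor need they produce bounded candidate derivatives; without that, the fiber contraction framework does not get off the ground. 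The same issue propagates to every induction step. The paper's flow-based argument is precisely engineered to avoid this: smooth dependence of solutions on initial conditions requires only that the vector field of \eqref{systemlemmaautonomized} be $C^p$, with no uniform bound; global-in-$t$ existence along $M_\e$ follows from the boundedness of $(f_\e,g_\e)$ together with $t'=1$; and the only regularity input about the invariant manifold itself is the $\theta$-regularity at $t=0$ already supplied by Lemma \ref{lemmahale}\ref{thesisderivatives}. To rescue your route one would have to add (and justify from context) an explicit assumption of uniform boundedness of $\partial_t^j\zeta_i$, $j\leq p$, which the stated hypotheses do not provide.
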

The next three Propositions consider the issue of regularity of the family $(f_\e,g_\e)$ with respect to the parameter $\e$. They will be crucial when we discuss the statement concerning dynamics of Theorem \ref{maintheorem}.
\begin{proposition} \label{corollaryregularity2}
	Consider system \eqref{systemlemma} with the hypotheses presented in this section. Suppose that $\zeta_0$, $\zeta_1$, and $\zeta_2$ are of class $C^2$. Let $c: (0,\e_1) \to C(\R^2;\R^m) \times C(\R^2; \R^n) $ be defined by $c(\e) = (f_\e,g_\e)$, where $C(\R^2;\R^m)$ and $C(\R^2; \R^n)$ are equipped with the uniform norm. If $\e_1>0$ is sufficiently small, then $c$ is of class $C^1$.
\end{proposition}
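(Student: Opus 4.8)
The plan is to realize $(f_\e,g_\e)$ as a fixed point of an $\e$-dependent uniform contraction and then to invoke the implicit function theorem in Banach spaces. Recall from the proof of Lemma~\ref{lemmahale} that, for each $\e$, $(f_\e,g_\e)$ is the unique fixed point of the operator $S^\e=(S^\e_1,S^\e_2)$ on a complete metric space of the form $\mathcal P_\omega(D(\e),\Delta(\e))\times\mathcal A_\omega(D(\e),\Delta(\e))$, which is a closed bounded convex subset of the Banach space $E_\omega$ of bounded continuous pairs $(F,G)$ with $F(t,\theta+\omega)=F(t,\theta)$ and $G(t,\theta+\omega)=-G(t,\theta)$, taken with the uniform norm. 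Fix an arbitrary $\e_*\in(0,\e_1)$; it suffices to prove that $c$ is $C^1$ on a neighborhood of $\e_*$. Since $D$, $\Delta$, and all the constants produced in the proof of Lemma~\ref{lemmahale} depend continuously on $\e$, after shrinking $\e_1$ we may fix a single such set $\mathcal Z\subset E_\omega$ (with radii slightly larger than $D(\e_*),\Delta(\e_*)$) such that, for all $\e$ in a neighborhood $I_*$ of $\e_*$, the map $S^\e$ sends $\mathcal Z$ into itself and contracts it with a constant $\kappa<1$ independent of $\e$; in particular $\e\mapsto(f_\e,g_\e)$ is already continuous on $I_*$. Moreover, as in \cite[Section 28, Lemma 1]{bogo}, $S^\e$ likewise contracts on a function class consisting of pairs that are $C^1$ in $\theta$ with a controlled $\theta$-derivative — a class which, by Proposition~\ref{corollaryregularity1}, still contains the fixed point — so the chain rule through $F$ and $G$ below is licit.

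The heart of the matter is to show that $(\e,F,G)\mapsto S^\e(F,G)$ is of class $C^1$ from $I_*\times\mathcal Z$ into $E_\omega$. This rests on three ingredients. (1) The $\theta$-flow $(\e,F,G)\mapsto T_{F,G}(\cdot,\cdot,\cdot,\e)$ is jointly $C^1$: its derivative with respect to $\e$ and its Fr\'echet derivative with respect to the function variable $(F,G)$ are obtained by solving the associated linear variational equations, which is legitimate because $\zeta_0$ is $C^2$ and the relevant members of $\mathcal Z$ are $C^1$ in $\theta$. (2) The substitution (Nemytskii) operators $(F,G)\mapsto\zeta_i(\cdot,\cdot,F(\cdot,\cdot),G(\cdot,\cdot),\e)$ are $C^1$ between the pertinent spaces of bounded continuous functions, with derivative $h\mapsto D\zeta_i(\cdot)\,h$; this is the standard fact that post-composition with a map of class $C^1$ whose derivative is (locally) bounded and uniformly continuous is $C^1$ on spaces of continuous functions. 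It is exactly here — in order for the derivatives $D_{(F,G)}S^\e$ and $\partial_\e S^\e$ to be not merely defined but continuous in $(\e,F,G)$ — that the hypothesis ``$\zeta_i$ of class $C^2$'' is used rather than merely $C^1$: forming the derivative of $S^\e$ consumes one order of smoothness. (3) All of this passes under the integral signs defining $S^\e_1,S^\e_2$, since the integrands and their $\e$- and $(F,G)$-derivatives are dominated by an integrable function of $x$, owing to the exponential estimates $\|J_i(x)\|\le Ke^{-\alpha|x|}$ guaranteed by hypothesis iv). Combining (1)--(3) yields the desired joint $C^1$ regularity, together with the continuity of $(\e,F,G)\mapsto D_{(F,G)}S^\e$ and of $(\e,F,G)\mapsto\partial_\e S^\e$.

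With this in hand, apply the implicit function theorem to $\Phi(\e,F,G):=(F,G)-S^\e(F,G)$ on $I_*\times\mathcal Z$ (in the version suited to a closed convex domain, $\Phi$ being $C^1$ as above): since $\|D_{(F,G)}S^\e\|\le\kappa<1$, the operator $D_{(F,G)}\Phi=\mathrm{Id}-D_{(F,G)}S^\e$ is boundedly invertible, with inverse $\sum_{k\ge0}(D_{(F,G)}S^\e)^k$, so the equation $\Phi=0$ has a $C^1$ solution branch; by uniqueness of the fixed point this branch is $\e\mapsto(f_\e,g_\e)$, whence
\[
\frac{d}{d\e}(f_\e,g_\e)=\big(\mathrm{Id}-D_{(F,G)}S^\e\big)^{-1}\big(\partial_\e S^\e\big)\big|_{(F,G)=(f_\e,g_\e)}.
\]
Since $\e_*\in(0,\e_1)$ was arbitrary, $c$ is $C^1$ on $(0,\e_1)$ after possibly replacing $\e_1$ by a smaller positive number. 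The main obstacle is the second paragraph: honestly establishing the joint $C^1$-smoothness of the fixed-point operator — in particular the differentiable dependence of $T_{F,G}$ on the infinite-dimensional variable $(F,G)$, and the $C^1$ behavior of the substitution operators built from the $\zeta_i$ on spaces of only finitely differentiable functions — while keeping everything inside a function class on which $S^\e$ remains a uniform contraction, rather than an open subset of a Banach space.
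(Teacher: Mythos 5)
Your plan — argue that $(\e,F,G)\mapsto S^\e(F,G)$ is jointly $C^1$ into $E_\omega$ and then invoke the implicit function theorem — fails at the very step you flag as the obstacle, and the failure is not merely technical: $S^\e$ is \emph{not} Fréchet differentiable with respect to $(F,G)$ in the $C^0$ topology. The culprit is the self-composition hidden in the integrand: the argument of $\zeta_1$ contains $F\big(t+x,T_{F,G}(t+x,t,\theta,\e)\big)$, so $F$ appears both as the function being evaluated and as a parameter of the inner flow $T_{F,G}$. Perturbing $F\to F+h$ and subtracting the candidate linear term leaves, among other things, the residual $h\big(T_{F+h,G}\big)-h\big(T_{F,G}\big)$. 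Its size is of order $L_h\cdot\|T_{F+h,G}-T_{F,G}\|_\infty = O(L_h\|h\|_\infty)$, where $L_h$ is the $\theta$-Lipschitz constant of $h$. For the admissible perturbations inside your set $\mathcal Z$ (Lipschitz constant bounded by $\sim 2\Delta(\e)$, but not shrinking with $\|h\|_\infty$), this residual is genuinely of order $\|h\|_\infty$, not $o(\|h\|_\infty)$. Equivalently, the formal derivative of $F\mapsto F(T_{F,G})$ involves $\partial_\theta F$, and $F\mapsto\partial_\theta F$ is not a bounded (or even everywhere-defined) operator on $C^0$; hence even granting existence of the derivative at the fixed point, the map $(F,G)\mapsto D_{(F,G)}S^\e$ cannot be $C^0$-continuous, which the implicit function theorem also needs. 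This is precisely the classical difficulty that the fiber contraction theorem is designed to sidestep: the Lyapunov--Perron-type operator is a uniform $C^0$-contraction but is not $C^1$ as a map of $C^0$ spaces.

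The paper's proof of Proposition~\ref{corollaryregularity2} avoids differentiating the operator altogether. It works with the explicit Picard iterates $(P_k(\e),A_k(\e))=(S^\e)^k(0,0)$, each a concrete $C^1$ function of $\e$, and establishes a triangular system of recursive estimates: a $1/4$-contraction for the $C^0$-increments, a recursion for the $\theta$-derivative increments driven by the first, and a recursion for the $\e$-derivative increments $P_k'(\e),A_k'(\e)$ driven by the first two with a $1/4$ self-loop. This forces $(P_k',A_k')$ to converge uniformly on compact $\e$-intervals, and the term-by-term differentiation theorem yields $c\in C^1$. That is the ``differentiate the iteration, pass to the limit'' implementation of the fiber-contraction idea, and it is precisely what replaces the implicit function theorem here. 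To rescue your route you would have to move to a graded norm such as $\|F\|_\lambda=\|F\|_\infty+\lambda\|\partial_\theta F\|_\infty$ on a $C^1_\theta$ subspace, where the offending residual does become $o(\|h\|_\lambda)$; but then you must reprove that $S^\e$ is a uniform contraction in this stronger norm, which is not part of Lemma~\ref{lemmahale} and amounts to the paper's $\theta$-derivative recursion in disguise.
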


\begin{proposition} \label{propositiondeltatheta}
	Consider system \eqref{systemlemma} with the hypotheses presented in this section. Suppose that $\zeta_0$, $\zeta_1$, and $\zeta_2$ are of class $C^{p+1}$. Let $T_{F,G}$ be defined as in the proof of Lemma \ref{lemmahale}. Then, if $\e_1>0$ is sufficiently small, then the following holds: there is $N_T \in \mathbb{N}$ and, for each compact interval $[a,b] \subset (0,\e_1]$, there are $C_{[a,b]}>0$ and $M_{[a,b]}>0$ such that
	\[
	\left\|\frac{\partial^q f_\e}{\partial \theta^q} \right\| \leq C_{[a,b]} , \qquad \left\|\frac{\partial^q g_\e}{\partial \theta^q} \right\| \leq C_{[a,b]}, 
	\]
	and 
	\[
	\left|\frac{\partial^q T_{f_\e,g_\e}}{\partial \theta_0^q}(t+x,t,\theta,\e) \right| \leq M_{[a,b]}\, e^{N_{T} L(\e)(1+2\Delta(\e)) |x|}
	\]
	for all $q \in \{1,\ldots,p+1\}$ and all $\e \in [a,b]$. 
\end{proposition}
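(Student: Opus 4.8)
The plan is to prove both families of bounds simultaneously, by induction on $q\in\{1,\ldots,p+1\}$. Write $L'(\e):=L(\e)(1+2\Delta(\e))$ for the rate occurring in the statement, and recall from the proof of Lemma \ref{lemmahale} that $f_\e,g_\e$ are the fixed points of $S^\e$, that $|J_i(x)|\le K\,e^{-\alpha|x|}$ for constants $K,\alpha>0$ depending only on $H_1,H_2$, and that $L(\e),\Delta(\e)\to0$ as $\e\to0$, so that $\inf_{\e\in[a,b]}L(\e)>0$ for every compact $[a,b]\subset(0,\e_1]$. Since $\zeta_0,\zeta_1,\zeta_2\in C^{p+1}$, the $\theta$-derivatives of $f_\e,g_\e$ up to order $p+1$ exist by the argument of Proposition \ref{corollaryregularity1}, so the left-hand sides make sense; moreover the partial derivatives of the $\zeta_i$ of order $\le p+1$ are bounded on the relevant domain. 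For the base case $q=1$ one differentiates $\theta'=1+\zeta_0(s,\theta,f_\e(s,\theta),g_\e(s,\theta),\e)$ with respect to the initial condition, obtaining $\partial_{\theta_0}T_{f_\e,g_\e}(s,t,\theta_0,\e)=\exp\int_t^s\big(\partial_\theta\zeta_0+\partial_y\zeta_0\,\partial_\theta f_\e+\partial_z\zeta_0\,\partial_\theta g_\e\big)\,d\sigma$; by hypothesis $iii)$ and item \ref{thesisdelta} of Lemma \ref{lemmahale} the integrand is $\le L'(\e)$ in modulus, whence $|\partial_{\theta_0}T_{f_\e,g_\e}(t+x,t,\theta,\e)|\le e^{L'(\e)|x|}$, and $\|\partial_\theta f_\e\|_\infty,\|\partial_\theta g_\e\|_\infty\le\Delta(\e)$ is part of that same item.

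For the inductive step, fix $q\ge2$, assume the bounds up to order $q-1$, and abbreviate $A_q:=\|\partial_\theta^q f_\e\|_\infty+\|\partial_\theta^q g_\e\|_\infty$ and $v_j(s):=\partial_{\theta_0}^j T_{f_\e,g_\e}(s,t,\theta_0,\e)$. Differentiating the $\theta$-equation $q$ times via the Fa\`a di Bruno formula, $v_q$ solves a scalar linear problem $v_q'=(\partial_\theta\Phi_\e)\,v_q+G_q$, $v_q(t)=0$, where $\Phi_\e(s,\theta):=\zeta_0(s,\theta,f_\e(s,\theta),g_\e(s,\theta),\e)$ and $G_q$ is a universal polynomial in $v_1,\dots,v_{q-1}$ and in $\partial_\theta^2\Phi_\e,\dots,\partial_\theta^q\Phi_\e$. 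The crucial observation is that the only order-$q$ derivatives of $f_\e,g_\e$ appearing in $G_q$ enter through $\partial_\theta^q\Phi_\e=\partial_y\zeta_0\,\partial_\theta^q f_\e+\partial_z\zeta_0\,\partial_\theta^q g_\e+(\text{lower-order terms})$, multiplied by $v_1^q$, and that they are multiplied by $\partial_y\zeta_0$ or $\partial_z\zeta_0$, of modulus $\le L(\e)$. Plugging the variation-of-constants formula $v_q(s)=\int_t^s\exp(\int_\tau^s\partial_\theta\Phi_\e)\,G_q(\tau)\,d\tau$ into the inductive bounds, and using that $\bigl|\int_t^s e^{cL'(\e)|\tau-t|}\,d\tau\bigr|\le e^{cL'(\e)|s-t|}/(cL'(\e))$ and that a polynomial times $e^{cL'(\e)|x|}$ is dominated, up to a constant multiple, by $e^{2cL'(\e)|x|}$, one obtains, for an integer $N_T$ depending only on $p$,
\[
\big|v_q(t+x)\big|\le\big(M^0_{[a,b]}+C_1 A_q\big)\,e^{\,N_T L'(\e)\,|x|}\qquad(\e\in[a,b]),
\]
with $C_1>0$ absolute and $M^0_{[a,b]}$ depending only on $[a,b]$; here the factor $L(\e)$ in front of the order-$q$ derivatives has been exactly absorbed by the factor $1/L'(\e)$ produced when integrating $v_1^q$ (so $C_1$ is harmless), while the remaining constants are finite only because $\inf_{[a,b]}L>0$ — which is where the dependence on the compact interval originates.

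The next step is to differentiate the fixed-point identities $f_\e=S_1^\e(f_\e,g_\e)$ and $g_\e=S_2^\e(f_\e,g_\e)$ $q$ times in $\theta$. Writing $\Theta:=T_{f_\e,g_\e}(t+x,t,\theta,\e)$, the Fa\`a di Bruno formula applied to the compositions gives
\[
\partial_\theta^q f_\e(t,\theta)=\int_{-\infty}^{\infty} J_1(x)\big[\partial_y\zeta_1\,\partial_\theta^q f_\e(t+x,\Theta)+\partial_z\zeta_1\,\partial_\theta^q g_\e(t+x,\Theta)\big]\,v_1(t+x)^q\,dx+R_q^f(t,\theta),
\]
together with the analogous identity for $\partial_\theta^q g_\e$, where the $\zeta_1$-derivatives are evaluated along the relevant arguments and $R_q^f$ is a finite sum of terms each containing at most one factor among $v_q$ (with coefficient of modulus $\le L'(\e)$) and $\partial_\theta^{\le q-1}f_\e,\partial_\theta^{\le q-1}g_\e$, all remaining factors being products of derivatives of $\zeta_1$ and of $v_1,\dots,v_{q-1}$ controlled by the inductive hypothesis. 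If $\e_1$ is chosen so that $N_T L'(\e)<\alpha/2$ on $(0,\e_1]$, then the integral operator displayed has operator norm $O(L(\e))$ on the Banach space of pairs of bounded continuous functions (because $|\partial_y\zeta_1|,|\partial_z\zeta_1|\le L(\e)$ and $\int|J_1(x)|e^{qL'(\e)|x|}\,dx<\infty$), uniformly in $\e$, so one may solve for $(\partial_\theta^q f_\e,\partial_\theta^q g_\e)$; substituting the previous paragraph's bound for $v_q$ into $R_q^f,R_q^g$ and using $\int|J_1(x)|e^{N_T L'(\e)|x|}\,dx<\infty$ then yields $A_q\le c\,L'(\e)\,A_q+C^0_{[a,b]}$ for an absolute $c>0$, whence, after a further shrinking of $\e_1$, $A_q\le2C^0_{[a,b]}=:C_{q,[a,b]}$. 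Feeding this back into the bound for $v_q$ gives $|\partial_{\theta_0}^q T_{f_\e,g_\e}(t+x,t,\theta,\e)|\le M_{q,[a,b]}e^{N_T L'(\e)|x|}$. This would close the induction, and one finishes by setting $C_{[a,b]}:=\max_{q\le p+1}C_{q,[a,b]}$ and $M_{[a,b]}:=\max_{q\le p+1}M_{q,[a,b]}$.

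The hardest part will be making the scheme run \emph{uniformly} in $\e\in(0,\e_1]$. Two ingredients are essential: first, that the growth rate $N_T L'(\e)$ of the quantities $v_q$ stay strictly below the exponential decay rate $\alpha$ of the kernels $J_i$ — arranged by taking $\e_1$ small, since $L(\e),\Delta(\e)\to0$ — so that every convolution integral converges and the operators obtained by differentiating $S_1^\e,S_2^\e$ remain contractions with an $\e$-independent gap; and second, the exact cancellation of the small factor $L(\e)$ (present only because hypothesis $iii)$ controls the \emph{first} derivatives of the $\zeta_i$) against the large factor $1/L'(\e)$ created when integrating exponentials of equal rate, which keeps the feedback coefficient of the top-order derivative bounded while forcing the surviving constants to depend on $[a,b]$ through $\inf_{[a,b]}L>0$. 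The remaining work is routine but somewhat lengthy Fa\`a di Bruno bookkeeping, to verify that no term other than those displayed carries an order-$q$ derivative of $f_\e$, $g_\e$, or $T_{f_\e,g_\e}$.
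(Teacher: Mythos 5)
Your proposal pursues the same core strategy as the paper: bound the iterated $\theta$-derivatives of $T_{f_\e,g_\e}$ by differentiating the transport ODE, apply Fa\`a di Bruno to the composition with $\zeta_i$, isolate the top-order terms whose coefficients carry the small factor $L(\e)$, use Gr\"onwall to get the exponential bound on $\partial_{\theta_0}^q T$, and then feed this into the (differentiated) integral operator where the convergence of $\int|J_i(x)|e^{N_T L'(\e)|x|}\,dx$ is ensured by taking $\e_1$ small. The identification of the $L(\e)$ vs.\ $1/L'(\e)$ cancellation, and of $\inf_{[a,b]}L>0$ as the sole source of the $[a,b]$-dependence, are also exactly the paper's observations.

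The one genuine structural difference is that you differentiate the fixed-point identity $f_\e=S_1^\e(f_\e,g_\e)$, $g_\e=S_2^\e(f_\e,g_\e)$ directly and close the estimate by an a priori/contraction argument $A_q\le cL'(\e)A_q+C^0_{[a,b]}$, whereas the paper works with the iteration sequence $(P_k,A_k)$ introduced for Proposition~\ref{corollaryregularity2} and proves the bound $C_{1,q}$ by an \emph{inner} induction on $k$ before passing to the limit $(f_\e,g_\e)$. The paper's choice is not cosmetic: each iterate $(P_k(\e),A_k(\e))$ is a finite composition of $C^{p+1}$ maps, hence automatically $C^{p+1}$, so the inner induction establishes boundedness of the $(p+1)$-th derivatives and propagates it to the limit without any circularity. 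Your route, by contrast, needs $\partial_\theta^q f_\e,\partial_\theta^q g_\e$ to exist and be finite for $q\le p+1$ \emph{before} the self-referential inequality can be read as an a priori bound, and you attribute this to ``the argument of Proposition~\ref{corollaryregularity1}.'' That proposition, however, gives $C^p$ (not $C^{p+1}$) regularity of $f_\e,g_\e$: it relies on Lemma~\ref{lemmahale}(e), which only reaches order $p$ under the standing hypotheses, and the extra assumption here is merely $\zeta_i\in C^{p+1}$, not the Lipschitz-$(p+1)$-jet condition that would let you bump the whole section's $p$ to $p+1$. So your step ``one may solve for $(\partial_\theta^q f_\e,\partial_\theta^q g_\e)$'' rests on an existence claim that is not available off the shelf; either re-derive the needed $C^{p+1}$ regularity first (essentially by the iterate argument you are trying to avoid), or adopt the paper's iterate-plus-inner-induction scheme, which delivers existence and the uniform bound in one stroke.
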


Proposition \ref{propositiondeltatheta} admits the following Corollary, which is a straightforward application of the mean value inequality.

\begin{corollary} \label{corollarydeltatheta}
	Consider system \eqref{systemlemma} with the hypotheses presented in this section. Suppose that $\zeta_0$, $\zeta_1$, and $\zeta_2$ are of class $C^{p+1}$. Then, if $\e_1>0$ is sufficiently small, then the following holds: for each compact interval $[a,b] \subset (0,\e_1]$, there is $C_{[a,b]}>0$ such that
	\[
	\left\|\frac{\partial^q f_\e}{\partial \theta^q} (t,\theta_2) - \frac{\partial^q f_\e}{\partial \theta^q} (t,\theta_1)\right\| \leq C_{[a,b]} |\theta_2-\theta_1|, \qquad \left\|\frac{\partial^q g_\e}{\partial \theta^q} (t,\theta_2) - \frac{\partial^q g_\e}{\partial \theta^q} (t,\theta_1)\right\| \leq C_{[a,b]} |\theta_2-\theta_1|
	\]
	for all $q \in \{1,\ldots,p\}$, all $t,\theta_1,\theta_2 \in \R$, and all $\e \in [a,b]$. 
\end{corollary}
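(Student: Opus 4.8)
The plan is to obtain this Corollary as an immediate consequence of the uniform derivative bounds in Proposition \ref{propositiondeltatheta} combined with the mean value inequality, so there is no genuinely new analysis to carry out. First I would observe that the hypothesis that $\zeta_0,\zeta_1,\zeta_2$ are of class $C^{p+1}$ is precisely what is needed to apply Lemma \ref{lemmahale}, Proposition \ref{corollaryregularity1}, and Proposition \ref{propositiondeltatheta} with $p$ replaced by $p+1$; in particular, for each $\e\in(0,\e_1]$ the maps $(t,\theta)\mapsto f_\e(t,\theta)$ and $(t,\theta)\mapsto g_\e(t,\theta)$ are of class $C^{p+1}$, so that for every $q\in\{1,\ldots,p\}$ the function $\theta\mapsto \partial^q f_\e/\partial\theta^q(t,\theta)$ (with $t$ and $\e$ frozen) is $C^1$ with derivative $\partial^{q+1}f_\e/\partial\theta^{q+1}(t,\theta)$, and likewise for $g_\e$.

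Next, fix a compact interval $[a,b]\subset(0,\e_1]$. Proposition \ref{propositiondeltatheta} furnishes a constant, which I denote $K_{[a,b]}>0$, such that
\[
\left\|\frac{\partial^{q'} f_\e}{\partial\theta^{q'}}(t,\theta)\right\|\leq K_{[a,b]}, \qquad \left\|\frac{\partial^{q'} g_\e}{\partial\theta^{q'}}(t,\theta)\right\|\leq K_{[a,b]}
\]
for all $(t,\theta)\in\R^2$, all $q'\in\{1,\ldots,p+1\}$, and all $\e\in[a,b]$. Given $q\in\{1,\ldots,p\}$, apply this bound with $q'=q+1\le p+1$ to control $\partial^{q+1}f_\e/\partial\theta^{q+1}$ uniformly on $\R^2$, and then invoke the mean value inequality for the $C^1$ function $\theta\mapsto \partial^q f_\e/\partial\theta^q(t,\theta)$ to get
\[
\left\|\frac{\partial^q f_\e}{\partial\theta^q}(t,\theta_2)-\frac{\partial^q f_\e}{\partial\theta^q}(t,\theta_1)\right\|\leq \sup_{\theta\in\R}\left\|\frac{\partial^{q+1}f_\e}{\partial\theta^{q+1}}(t,\theta)\right\|\,|\theta_2-\theta_1|\leq K_{[a,b]}\,|\theta_2-\theta_1|,
\]
and the identical chain of inequalities gives the corresponding bound for $g_\e$. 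Taking $C_{[a,b]}:=K_{[a,b]}$ — which, crucially, is a single constant valid simultaneously for all $q\in\{1,\ldots,p\}$ since Proposition \ref{propositiondeltatheta} already provides one such constant for the whole range of orders — completes the argument.

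The only point requiring attention is the bookkeeping in the first step: one must make sure that, under the $C^{p+1}$ hypothesis on the $\zeta_i$, Proposition \ref{propositiondeltatheta} genuinely bounds the $\theta$-derivatives of $f_\e$ and $g_\e$ up to order $p+1$ (not merely order $p$), because it is the order-$(q+1)$ derivative that plays the role of the Lipschitz bound for the order-$q$ derivative. This is exactly why the Corollary asks for $C^{p+1}$ regularity rather than $C^p$. No other obstacle is anticipated.
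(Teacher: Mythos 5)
Your proof is correct and follows exactly the route the paper intends: the paper explicitly remarks that the Corollary is ``a straightforward application of the mean value inequality'' to the derivative bounds furnished by Proposition~\ref{propositiondeltatheta}, which is precisely your argument. (One cosmetic note: there is no need to ``replace $p$ by $p+1$'' when invoking Proposition~\ref{propositiondeltatheta}, since that proposition already assumes the $\zeta_i$ are $C^{p+1}$ and already bounds $\partial^q f_\e/\partial\theta^q$ for $q$ up to $p+1$; your use of the order-$(q+1)$ bound as the Lipschitz constant for the order-$q$ derivative is exactly what the hypothesis is there for.)
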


\begin{proposition} \label{corollaryregularity3}
	Consider system \eqref{systemlemma} with the hypotheses presented in this section. Suppose that $\zeta_0$, $\zeta_1$, and $\zeta_2$ are of class $C^{p+1}$. Let $q\leq p$ be a non-negative integer. Then, if $\e_1>0$ is sufficiently small, then the functions
	\[
	\e \mapsto \frac{\partial^{q}f_\e}{\partial \theta^{q}} \quad \text{and} \quad\e \mapsto \frac{\partial^{q}g_\e}{\partial \theta^{q}}
	\]
	are locally Lipschitz continuous in the uniform norm for $\e \in (0,\e_1]$.
\end{proposition}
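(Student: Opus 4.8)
The plan is to argue by induction on $q\in\{0,1,\ldots,p\}$. The base case $q=0$ is exactly Proposition \ref{corollaryregularity2}: since $p\geq1$ we have $\zeta_0,\zeta_1,\zeta_2\in C^{p+1}\subseteq C^2$, so $\e\mapsto(f_\e,g_\e)$ is $C^1$, in particular locally Lipschitz, in the uniform norm. For the inductive step, fix $1\leq q\leq p$ and assume, as inductive hypothesis, that $\e\mapsto\partial_\theta^j f_\e$ and $\e\mapsto\partial_\theta^j g_\e$ are locally Lipschitz in the uniform norm for every $j<q$; it is convenient to carry along in the induction the analogous statement for the flow jets $\partial_{\theta_0}^j T_{f_\e,g_\e}(t+x,t,\theta,\e)$, $j<q$, measured in the norm $\|h\|_\beta:=\sup_{x,t,\theta}|h(x,t,\theta)|\,e^{-\beta|x|}$ for a small fixed $\beta>0$ (the corresponding $\e$-Lipschitz bound for these flow jets being furnished by a Gronwall estimate entirely parallel to the one behind Proposition \ref{propositiondeltatheta}).

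To carry out the inductive step, differentiate the fixed-point identities $f_\e=S_1^\e(f_\e,g_\e)$ and $g_\e=S_2^\e(f_\e,g_\e)$ exactly $q$ times with respect to $\theta$ (well defined since $f_\e,g_\e$ are $C^p$ by Proposition \ref{corollaryregularity1} and $q\leq p$), commuting $\partial_\theta^q$ with the improper integrals defining $S_1^\e,S_2^\e$; this is legitimate because the differentiated integrands are dominated, uniformly in $(t,\theta)$ and in $\e$ on compact subsets of $(0,\e_1]$, by $\|J_i(x)\|$ times $e^{c_q N_T L(\e)(1+2\Delta(\e))|x|}$, which is integrable for $\e_1$ small by the exponential decay of the $J_i$ and Proposition \ref{propositiondeltatheta}. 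Expanding the $\theta$-derivatives of the composed integrands by the Fa\`{a} di Bruno formula (organizing the compositions through the partial Bell polynomials $B_{\cdot,\cdot}$) and using that each $\partial_{\theta_0}^k T_{f_\e,g_\e}$ obeys the $k$-th variational equation along $(f_\e,g_\e)$, one finds that the $q$-jet
\[
u_\e:=\bigl(\partial_\theta^q f_\e,\ \partial_\theta^q g_\e,\ \partial_{\theta_0}^q T_{f_\e,g_\e}\bigr)
\]
is a fixed point of a self-map $\Phi_\e$ of a suitable complete metric space --- a product of uniform-norm balls (with radii dictated by Proposition \ref{propositiondeltatheta}) for the first two components and a $\|\cdot\|_\beta$-ball for the third, all with the Lipschitz-in-$\theta$ moduli supplied by Corollary \ref{corollarydeltatheta} --- where $\Phi_\e[u]$ is assembled, through finite sums, products, compositions and integration against $J_1,J_2$, from $u$ itself, from the known lower jets of $f_\e,g_\e$ and of $T_{f_\e,g_\e}$ (orders $<q$), and from $\zeta_0,\zeta_1,\zeta_2$ together with their $(\theta,y,z)$-derivatives of order $\leq q\leq p$.

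The structural point is that in $\Phi_\e[u]$ the argument $u$ (each of its three top-order components, possibly composed with the flow) appears only multiplied by a \emph{first-order} partial derivative of some $\zeta_i$: indeed, in the Fa\`{a} di Bruno expansion an order-$q$ derivative hitting a single inner block forces multiplicity one, and the variational equation for $\partial_{\theta_0}^qT$ is itself linear with a coefficient built from first-order derivatives of $\zeta_0$. By hypothesis $iii)$ these first-order derivatives are bounded by $L(\e)\to0$, and, combined with $\|J_i(x)\|\leq Ke^{-\alpha|x|}$ and the sub-$\alpha$ exponential growth of the flow jets from Proposition \ref{propositiondeltatheta}, this makes $\Phi_\e$ a contraction of uniform ratio $\CO(L(\e))<1$ on $(0,\e_1]$ for $\e_1$ small. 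It then remains to observe that $\e\mapsto\Phi_\e[u]$ is locally Lipschitz, uniformly for $u$ in the ball: the maps $\e\mapsto$ (partial derivative of $\zeta_i$ of order $\leq p$) are $C^1$ --- hence locally Lipschitz --- since $\zeta_0,\zeta_1,\zeta_2\in C^{p+1}$; the lower jets of $f_\e,g_\e$ and of $T_{f_\e,g_\e}$ depend locally Lipschitz on $\e$ by the strengthened inductive hypothesis, with $\theta$-moduli controlled locally uniformly by Corollary \ref{corollarydeltatheta}; and the operations combining these building blocks --- finite sums and products of uniformly bounded quantities, composition with the uniformly Lipschitz-in-$\theta$ flow, and integration against the fixed kernels $J_i$ whose exponential decay absorbs the sub-$\alpha$ exponential growth picked up along the way --- preserve local Lipschitz dependence on $\e$. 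The uniform contraction principle then yields that $\e\mapsto u_\e$ is locally Lipschitz; reading off the first two components closes the induction.

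The step I expect to be the main obstacle is the bookkeeping of the differentiation: tracking, through the Fa\`{a} di Bruno expansion and the tower of variational equations, that every top-order occurrence of $u$ genuinely carries a factor $L(\e)$ (so that $\Phi_\e$ contracts uniformly), while simultaneously keeping all the exponential-in-$|x|$ growth rates that appear strictly below the decay rate $\alpha$ of the $J_i$, so that the improper integrals --- and their $\e$-differences, needed for the Lipschitz estimate --- converge uniformly. This is precisely the technical heart of the Hale--Bogoliubov-type argument, here run one derivative order at a time and with the parameter $\e$ carried along.
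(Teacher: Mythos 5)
Your proposal is correct and follows essentially the same path as the paper's proof: an induction on the derivative order $q$, with a strengthened inductive hypothesis carrying the flow jets $\partial_{\theta_0}^{j}T_{f_\e,g_\e}$ alongside $\partial_\theta^{j}f_\e,\partial_\theta^{j}g_\e$; Fa\`a di Bruno to organize the derivatives of the composed integrands into terms of order $<q$ plus exactly two top-order terms (one carrying $\partial_\theta^q(f_\e,g_\e)$, one carrying $\partial_{\theta_0}^qT$), each multiplied by a first-order derivative of some $\zeta_i$ and hence by an $L(\e)$ factor; a Gr\"onwall estimate for the top flow jet; and finally absorption of the top-order contribution into the left-hand side because the coefficient is $\CO(L(\e))$ after convolution with the $J_i$. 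The only cosmetic difference is packaging: the paper derives the inequality
\[
\left\| \tfrac{\partial^{N} f_\e}{\partial \theta^{N}} - \tfrac{\partial^{N} f_{\tilde{\e}}}{\partial \theta^{N}}\right\| + \left\|\tfrac{\partial^{N} g_\e}{\partial \theta^{N}}  - \tfrac{\partial^{N} g_{\tilde{\e}}}{\partial \theta^{N}}\right\| \leq \tfrac{16KL(\e)}{\alpha}\Big[\cdots\Big] + \tilde{C}\,|\e-\tilde\e|
\]
and rearranges by hand, whereas you phrase the same step as the uniform contraction principle applied to a self-map $\Phi_\e$ with contraction ratio $\CO(L(\e))$ and Lipschitz dependence on $\e$; these are the same argument. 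Two minor remarks: the paper proves the base case $q=0$ directly from the fixed-point identity and Gr\"onwall rather than by invoking Proposition~\ref{corollaryregularity2} (whose domain is the open interval $(0,\e_1)$, so your shortcut silently drops the endpoint $\e=\e_1$); and your $\|\cdot\|_\beta$-weighted norm for the flow jets is an alternative but equivalent encoding of the paper's explicit bound $e^{N_qL(\e)(1+2\Delta(\e))|x|}$ in property Q.1.
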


\section{Proof of Theorem \ref{maintheorem}}\label{sec:proof}
This Section is devoted to the proof of Theorem \ref{maintheorem}. In Subsection \ref{subsec:cv}, we perform the change of variables that transforms system \eqref{eq:e1} into a system to which Lemma \ref{lemmahale} can be applied. In Subsection \ref{subsec:exregcon}, we apply this Lemma in order to prove the statements regarding existence, regularity, and convergence of Theorem \ref{maintheorem}. In Subsection \ref{subsec:stab}, we prove the statement regarding stability, and finally, in Subsection \ref{subsec:dyn}, the statement regarding the dynamics on the object $M_\varepsilon$ of the same Theorem.

\subsection{Change of variables} \label{subsec:cv}
Consider the differential equation \eqref{eq:e1}. We shall first find a change of coordinates transforming this system into one to which we can apply Lemma \ref{lemmahale}. Thus, let $\ell\in\{1,\ldots,\min(N,r-2)\}$ be such that $\f_1=\cdots\f_{\ell-1}=0$ and $\f_{\ell}\neq0$. By Theorem \ref{thm:av1}, there exists a $T$-periodic near-identity transformation \eqref{avtrans} that transforms the differential equation \eqref{eq:e1} into
\begin{equation}\label{eq:trans1}
	\dot \bz=\e^{\ell}\bg_{\ell}(\bz)+\e^{\ell+1} r_{\ell}(t,\bz,\e).
\end{equation}
Observe that the formulas given in \eqref{yi} ensure that $\bg_\ell$ is of class $C^{r-\ell+1}$. Moreover, $r_\ell$ is of class $C^{r-\ell}$.

Consider the $\omega$-periodic hyperbolic limit cycle $\varphi(s)$ of the guiding system $\dot \bz=\bg_\ell(\bz)$. Also, consider the linear variational equation
\begin{align}\label{eq:firstvariationalequation}
	\frac{dy}{dt} = D \bg_\ell (\varphi(t)) \cdot y.
\end{align}
Observe that $\varphi'(t)$ is a solution to the linear periodic system (\ref{eq:firstvariationalequation}). Let $\Phi(t)$ denote a fundamental matrix solution of this system. We will use Floquet theory to obtain a useful change of variables in a neighborhood of the limit cycle $\Gamma$. 

We remind the reader that the characteristic multipliers of (\ref{eq:firstvariationalequation}) are, for any choice of $\Phi$, the eigenvalues of the monodromy matrix $\Phi^{-1}(0) \Phi(\omega)$ (see, for instance, \cite{chicone2006ordinary}). Since $\Gamma$ is hyperbolic, we know that $1$ is an eigenvalue of multiplicity exactly $1$ of this matrix, all its other eigenvalues being outside the unit circle. By taking into account the real Jordan canonical form of the monodromy matrix, we see that $\Phi(t)$ can be chosen satisfying
\begin{equation}
		\Phi^{-1}(0) \Phi(\omega) = \text{diag}(1,\mathcal{J}_1,\mathcal{J}_2),
\end{equation}
where $\mathcal{J}_1 \in \R^{(n-d-1) \times (n-d-1)}$ and $\mathcal{J}_2 \in \R^{d \times d}$ are matrices in the real Jordan canonical form satisfying the following condition: each Jordan block of $\mathcal{J}_1$ associated to a real negative eigenvalue appears an even number of times, and every Jordan block of $\mathcal{J}_2$ is associated to a real negative eigenvalue and appears only once in this matrix. We remark that, with this choice, the first of column of $\Phi$ must be the only $\omega$-periodic solution of $\eqref{eq:firstvariationalequation}$, which is given by $\varphi'(t)$.
 
Let $I_d$ denote the $d \times d$ identity matrix. Under the above-mentioned conditions, by considering the logarithm of the matrices $R_1$ and $R_2$ (see, \cite{culver} and \cite[page 100]{gantmacher}), we know that there exist real matrices $R_1$ and $R_2$ such that $e^{\omega R_1} = \mathcal{J}_1$ and $e^{\omega R_2 + i\pi I_d}= \mathcal{J}_2$. In particular, we remark that the eigenvalues of $R_1$ and $R_2$ all have non-zero real parts.  For the same reason, the number of eigenvalues (counting multiplicity) of the Poincaré map defined in a transversal section of $\Gamma$ with modulus less than $1$ is equal to the number of eigenvalues (counting multiplicity) of $R := \text{diag}(R_1,R_2)$ with strictly negative real part. 

Define the matrices
\[\tilde{B}:= \text{diag}\left(0,R_1,R_2+i\frac{\pi}{\omega}I_d \right)\]
and
\[B:= \text{diag}\left(0,R_1,R_2\right) = \text{diag}(0,R).\]
It is easy to see that $e^{\omega \tilde{B}} = \Phi^{-1}(0) \Phi(\omega)$ and $e^{2\omega B} = e^{2\omega\tilde{B}} =  (\Phi^{-1}(0) \Phi(\omega))^2$.

Since $ D \bg_\ell (\varphi(t))$ is $\omega$-periodic, Floquet's theorem ensures that there are a $\omega$-periodic matrix function $t \mapsto \tilde{P}(t) \in \mathbb{C}^{n \times n}$ and a $2\omega$-periodic matrix function $t\mapsto P(t) \in \mathbb{R}^{n \times n}$, both of class $C^{r-\ell+1}$, such that 
\[\Phi(t)=\tilde{P}(t) e^{t\tilde{B}} = P(t) e^{tB}.\]
In particular, since the first column of $\Phi(t)$ is $\varphi'(t)$, it follows that $P(t)$ is of the form 
\[ P(t) = \left[\varphi'(t) \,\vline \; Q(t)\right], \]
where $t \mapsto Q(t) \in \R^{n \times (n-1)}$ is $2\omega$-periodic. Also, considering that $B$ and $\tilde{B}$ clearly commute, it follows that \[P(t+\omega) = P(t) e^{\omega(\tilde{B}-B)} = \left[\varphi'(t) \; \vline \; Q(t) A\right],\] 
where 
\begin{equation}\label{eq:definitionA}
	A := \text{diag}(I_{n-d-1},-I_d).
\end{equation} 
Thus, it is clear that $Q$ satisfies
\begin{equation}\label{eq:qs+omega}
	Q(t+\omega) = Q(t) A
\end{equation}
for all $t \in \R. $

Since $\Phi(t)$ solves (\ref{eq:firstvariationalequation}), it follows that
\[
	P'(t) +  P(t) \cdot B = D \bg_\ell(\varphi( t)) \cdot P( t).
\]
Thus, by restricting the equality above to the last $n-1$ columns, we obtain
\begin{equation}\label{eq:identityQomegatau}
	Q'( t) + Q( t) \cdot R = D \bg_\ell(\varphi( t)) \cdot Q( t),
\end{equation}
for all $t \in \R.$

We apply the transformation $\bz \mapsto (s,{\bf h} ) \in \R \times \R^{n-1}$ given by
\begin{equation} \label{eq:transformationzsh}
	\bz = \varphi(s)  +Q(s)\cdot  {\bf h}.
\end{equation} 
Observe that, by taking ${\bf h}$ to be sufficiently small and $s \in [0,\omega)$, we can ensure that the transformation $(s,{\bf h}) \mapsto \bz$ is injective. Accordingly, we will assume henceforth that $\|{\bf h}\| \leq 4\rho$, ensuring that our transformation is bijective. Let us find the differential equation in $(s,{\bf h})$ that is equivalent to (\ref{eq:trans1}). In order to do so, we differentiate (\ref{eq:transformationzsh}) with respect to $t$ and obtain
\[
	\dot \bz = (\varphi'(s) + Q'(s) \cdot {\bf h})\, \dot s + Q(s) {\bf \dot h}.
\]
Thus, by (\ref{eq:trans1}), it follows that
\begin{equation} \label{eq:systemtransformed}
	\begin{aligned}
	(\varphi'(s) + Q'(s) \cdot {\bf h})\, \dot s + Q(s) \cdot {\bf \dot h} = \e^\ell \bg_\ell (\varphi(s) + Q(s) \cdot {\bf h}) + \e^{\ell+1} r(t,\varphi(s) + Q(s) \cdot {\bf h},\e).
	\end{aligned}
\end{equation}

Observe that (\ref{eq:identityQomegatau}) ensures that 
\begin{equation} \label{eq:identitya}
	\begin{aligned}
		&\varepsilon^\ell  \varphi'(s) + \e^\ell Q'(s) \cdot {\bf h} + \e^\ell Q(s)  R \cdot {\bf h} = \e^\ell \bg_\ell(\varphi(s)) + \e^\ell D \bg_\ell (\varphi(s)) \cdot Q(s) \cdot {\bf h}.
	\end{aligned}
\end{equation} 
Let us define the functions
\[
	\begin{aligned}
		&Y(s,{\bf h}) := \bg_{\ell}\left(\varphi(s)  +Q(s) \cdot {\bf h}\right) - \bg_{\ell}(\varphi(s)) -  D \bg_\ell(\varphi(s)) \cdot Q(s) \cdot {\bf h}, \\
		&Z(t,s,{\bf h}, \varepsilon) := r_\ell\left(t,\varphi(s)  +Q(s) \cdot {\bf h},\varepsilon\right).
	\end{aligned}
\]
By subtracting (\ref{eq:identitya}) from (\ref{eq:systemtransformed}), we obtain
\begin{equation} \label{eq:systemaftertransform}
	\begin{aligned} 
	&\left(\varphi'(s) + Q'(s) {\bf h}\right) (\dot s - \varepsilon^\ell) +  Q(s) ({\bf \dot h} - \varepsilon^\ell R \cdot {\bf h}) = \varepsilon^\ell Y(s,{\bf h}) + \varepsilon^{\ell+1}Z(t,s,{\bf h}, \varepsilon).	\end{aligned}
\end{equation}

Observe that \eqref{eq:systemaftertransform} can be rewritten as:
\begin{equation} \label{eq:identitybmatrix}
	\left[\begin{array}{c|ccc}
	\varphi'(s) +Q'(s) \cdot {\bf h} & & &\\
		\rule[-1ex]{0.5pt}{6ex}& \multicolumn{3}{c}{\smash{\raisebox{1\normalbaselineskip}{$\text{\;\Large$ Q(s) $}$}}} 
	\end{array}\right] \cdot 
	\left[\begin{array}{c}
		\dot s - \varepsilon^\ell \\[11pt]
		{\bf \dot h} - \varepsilon^\ell R \cdot {\bf h}
	\end{array}\right] = \varepsilon^\ell Y(s,{\bf h}) + \varepsilon^{\ell+1}Z(t,s,{\bf h}, \varepsilon).
\end{equation}

Observe that the $C^{r-\ell}$ matrix function
\[
	C(s,{\bf h})=
	\left[\begin{array}{c|ccc}
		\varphi'(s) +Q'(s) \cdot {\bf h} & & &\\
		\rule[-1ex]{0.5pt}{6ex}& \multicolumn{3}{c}{\smash{\raisebox{1\normalbaselineskip}{$\text{\;\Large$ Q(s) $}$}}} 
	\end{array}\right]
\]
satisfies $C(s,0)=P(s)$ for all $s \in \R$. Since $P(s)$ is non-singular, for sufficiently small values of ${\bf h}$, the matrix $C(s,{\bf h})$ can be inverted. Therefore, assuming that $\rho>0$ is sufficiently small, if $\|{\bf h}\| \leq 4\rho$, then (\ref{eq:identitybmatrix}) can be transformed into 
\begin{equation} \label{eq:systemfinalformbeforerescaling}
	\left[\begin{array}{c}
		\dot s \\
		{\bf \dot h}
	\end{array}\right] = 
	\varepsilon^\ell 
	\left[\begin{array}{c}
		1\\
		R {\bf h}
	\end{array}\right] +
	\varepsilon^{\ell} (C(s,{\bf h}))^{-1} \cdot Y(s, {\bf h})+ \varepsilon^{\ell+1}(C(s,{\bf h}))^{-1} \cdot Z(t,s,{\bf h}, \varepsilon).
\end{equation}
Set ${\bf h} = ({\bf v},{\bf w}) \in \R^{n-d-1} \times \R^d$. Define $\Lambda_0(s, {\bf v},{\bf w})$ and $\Tilde{\Lambda}_0(t,s,{\bf v},{\bf w}, \e)$ to be the first line of the products $(C(s,{\bf h}))^{-1} \cdot Y(s, {\bf h})$ and $(C(s,{\bf h}))^{-1} \cdot Z(t,s,{\bf h}, \varepsilon)$, respectively. Similarly, define $\Lambda_1(s, {\bf v},{\bf w})$ and $\Tilde{\Lambda}_1(t,s,{\bf v},{\bf w}, \e)$ to be the next $n-d-1$ lines and $\Lambda_2(s, {\bf v},{\bf w})$ and $\Tilde{\Lambda}_2(t,s,{\bf v},{\bf w}, \e)$ to be the last $d$ lines of those products. Then, \eqref{eq:systemfinalformbeforerescaling} becomes 
\begin{equation} \label{eq:systemfinalformbeforerescalinglambda}
	\begin{aligned}
	&\dot s = \e^\ell +  \e^\ell \Lambda_0(s,{\bf v},{\bf w}) + \e^{\ell+1} \tilde{\Lambda}_0(t,s,{\bf v},{\bf w}, \e), \\
	&{\bf \dot v} = \e^\ell R_1\cdot {\bf v} +  \e^\ell \Lambda_1(s,{\bf v},{\bf w}) + \e^{\ell+1} \tilde{\Lambda}_1(t,s,{\bf v},{\bf w}, \e), \\
	&{\bf \dot w} = \e^\ell R_2\cdot {\bf w} +  \e^\ell \Lambda_2(s,{\bf v},{\bf w}) + \e^{\ell+1} \tilde{\Lambda}_2(t,s,{\bf v},{\bf w}, \e).
	\end{aligned}
\end{equation}

We apply the time rescaling $\varepsilon^\ell t = \tilde{t}$ to (\ref{eq:systemfinalformbeforerescalinglambda}) and finally obtain
\begin{equation} \label{eq:systemfinalformafterrescalinglambda}
	\begin{aligned}
		&s' = 1 +  \Lambda_0(s,{\bf v},{\bf w}) + \e \tilde{\Lambda}_0(\tilde{t}/\e^\ell,s,{\bf v},{\bf w}, \e), \\
		&{\bf v}' =  R_1\cdot {\bf v} +   \Lambda_1(s,{\bf v},{\bf w}) + \e \tilde{\Lambda}_1(\tilde{t}/\e^\ell,s,{\bf v},{\bf w}, \e), \\
		&{\bf w}' =  R_2\cdot {\bf w} +  \Lambda_2(s,{\bf v},{\bf w}) + \e \tilde{\Lambda}_2(\tilde{t}/\e^\ell,s,{\bf v},{\bf w}, \e).
	\end{aligned}
\end{equation}
where $\prime$ denotes a derivative with respect to $\tilde{t}$. Such differential system is well defined on $(\tilde{t},s,{\bf v}, {\bf w},\e) \in \R \times \R \times B_{n-d-1}(0,2\rho) \times B_{d}(0,2\rho) \times (0,\e_0]$.

\subsection{Existence, Regularity, and Convergence} \label{subsec:exregcon}
Henceforth, we consider that \eqref{eq:systemfinalformafterrescalinglambda} is defined over $\R \times \R \times B_{n-d-1}(0,\rho) \times B_{d}(0,\rho) \times (0,\e_0]$. Observe that \eqref{eq:systemfinalformafterrescalinglambda} is of the form considered in Lemma \ref{lemmahale}. We must now show that the hypotheses required for the application of that Lemma hold in our case. Observe that the fact that the parameter $\varepsilon$ appears in the denominator of the first argument of $\tilde{\Lambda}_0$, $\tilde{\Lambda}_1$, and $\tilde{\Lambda}_2$ in \eqref{eq:systemfinalformafterrescalinglambda} will not be an impediment to the application of the Lemma, since it is not required in its hypotheses that the functions appearing in the system be defined at $\varepsilon=0$. In fact, the conditions concerning boundedness, be it of the functions themselves or of their Lipschitz constants, can still be proved by resorting to the periodicity of $Z$.

For convenience, we will henceforth adopt the following notation
\[
\begin{aligned}
	& (C^{-1} \cdot Y) (s,{\bf h}) := (C (s,{\bf h}))^{-1} \cdot Y (s,{\bf h}), \\
	&(C^{-1} \cdot Z) (t,s,{\bf h}, \varepsilon) := (C (s,{\bf h}))^{-1} \cdot Z (t,s,{\bf h}, \varepsilon),\\
	&(C^{-1} \cdot Y + \e C^{-1} \cdot Z) (t,s,{\bf h}, \varepsilon) := (C (s,{\bf h}))^{-1} \cdot Y (s,{\bf h}) + \e (C (s,{\bf h}))^{-1} \cdot Z (t,s,{\bf h}, \varepsilon).
\end{aligned}
\]
With that in mind, we proceed to proving that Lemma \ref{lemmahale} can be applied to \eqref{eq:systemfinalformafterrescalinglambda}.

As remarked before, the eigenvalues of the matrix $R$ have non-zero real parts, so that it is immediate that hypothesis (iv) holds. Regarding hypothesis (i), observe that, by their definitions, we know that $Y(s+\omega,{\bf h}) = Y(s,A\cdot {\bf h})$ and $Z(t,s+\omega,{\bf h}, \varepsilon) = Z(t,s, A \cdot {\bf h}, \varepsilon)$. Furthermore, 
\[
C(s+\omega, {\bf h}) = 
\left[\begin{array}{c|ccc}
	\varphi'(s) +Q'(s) A \cdot {\bf h} & & &\\
	\rule[-1ex]{0.5pt}{6ex}& \multicolumn{3}{c}{\smash{\raisebox{1\normalbaselineskip}{$\text{\;\Large$ Q(s) A $}$}}} 
\end{array}\right]
= C(s, A\cdot {\bf h}) 
\left[\begin{array}{c|c}
	1 & 0 \\ \hline \\[-1.5\normalbaselineskip]
	0 & A
\end{array}\right].
\]
Thus, since $A^2=I_{n-1}$, it follows that 
\[
(C(s+\omega, {\bf h}))^{-1} = 
\left[\begin{array}{c|c}
	1 & 0 \\ \hline \\[-1.5\normalbaselineskip]
	0 & A
\end{array}\right] 
(C(s, A\cdot {\bf h}))^{-1}.
\]
Hence, it is easily verified that the following conditions hold:
\begin{itemize}
	\item $\Lambda_0(s+\omega ,{\bf v},{\bf w}) = \Lambda_0(s,{\bf v},-{\bf w})$; \\
	\item $\tilde{\Lambda}_0 (t,s+\omega, {\bf v},{\bf w}, \e) =  \tilde{\Lambda}_0 (t,s+\omega, {\bf v},-{\bf w}, \e)$; \\
	\item $\Lambda_1(s+\omega ,{\bf v},{\bf w}) = \Lambda_1(s,{\bf v},-{\bf w})$; \\
	\item $\tilde{\Lambda}_1 (t,s+\omega, {\bf v},{\bf w}, \e) =  \tilde{\Lambda}_1 (t,s+\omega, {\bf v},-{\bf w}, \e)$; \\
	\item $\Lambda_2(s+\omega ,{\bf v},{\bf w}) = -\Lambda_2(s,{\bf v},-{\bf w})$; \\
	\item $\tilde{\Lambda}_2 (t,s+\omega, {\bf v},{\bf w}, \e) = - \tilde{\Lambda}_2 (t,s+\omega, {\bf v},-{\bf w}, \e)$.
\end{itemize}
This ensures that (i) is valid. It remains to show that hypotheses (ii) and (iii) hold.

In order to do so, let $B_n(p,r)$ denote the open ball $\{x \in \mathbb{R}^n: \|x-p\|<r \}$. Also, let us define the following functions:
\[
	\begin{aligned}
		&\alpha_Y(s,{\bf h}) := \left\|\frac{\partial (C^{-1}\cdot Y)}{\partial (s,{\bf h})}(s,{\bf h})\right\|, \\
		&\alpha_Z{(s,{\bf h},t,\varepsilon)} : =\left\|\frac{\partial (C^{-1}\cdot \tilde{Z})}{\partial (s,{\bf h})}(t,s,{\bf h},\varepsilon)\right\|,
	\end{aligned}
\]
where $\|\cdot\|$ denotes the operator norm.

Let $\varepsilon_0>0$ be fixed. For $\sigma \in (0,\rho)$, define
\[
	L(\sigma):= \sup \left\{\alpha_Y{(s,{\bf h})}: (s,{\bf h}) \in \mathbb{R} \times \bar{B}_{n-1}(0,\sigma) \right\}.
\]
Observe that $\displaystyle \lim_{\sigma \to 0} L(\sigma) = 0$, because 
\[
	\alpha_Y(s,{\bf h}) \leq \left\|\frac{\partial (C^{-1}\cdot Y)}{\partial s}(s,{\bf h})\right\| + \left\|\frac{\partial (C^{-1}\cdot Y)}{\partial {\bf h}}(s,{\bf h})\right\|
\]
and $Y(s,0)=\frac{\partial Y}{\partial s} (s,0) = \frac{\partial Y}{\partial {\bf h}} (s,0) = 0$ for all $s \in \mathbb{R}$. Thus, we can extend $L$ continuously to $[0,\rho)$ by setting $L(0)=0$. Moreover, since $(C^{-1} \cdot Y)$ is also of class $C^{r-\ell}$, with $r-\ell\geq 2$, it follows by the mean value inequality that there is $\mathcal{M}_\rho>0$ such that
\[
	\alpha_Y(s,{\bf h}) \leq \mathcal{M}_\rho \|{\bf h}\| \leq \mathcal{M}_\rho \sigma
\]
for all $(s,{\bf h}) \in \R \times \bar{B}_{n-1}(0,\sigma)$, where $\sigma \in (0,\rho)$. 
Then, again by the mean value inequality, we conclude that
\begin{equation}\label{ineq:lipschitz1}
	\|(C^{-1} \cdot Y)(s_1,{\bf h_1}) - (C^{-1} \cdot Y)(s_2,{\bf h_2}) \| \leq L(\sigma) \|(s_1,{\bf h_1}) - (s_2, {\bf h_2}) \| \leq \mathcal{M}_\rho \sigma \|(s_1,{\bf h_1}) - (s_2, {\bf h_2}) \| ,
\end{equation}
for all $(s_1,{\bf h_1}), (s_2, {\bf h_2}) \in \mathbb{R} \times \bar{B}_{n-1}(0,\sigma)$. 

Since $r-\ell\geq2$, and since the $C^{r-\ell}$ function $\|C^{-1}\cdot Z\|$ is $T$-periodic in its first entry and $2\omega$-periodic in its second entry, it follows that there is $M>0$ such that
\[
	\sup \left\{\left\|(C^{-1}\cdot Z)(\tilde{t}/\e^\ell,s,{\bf h},\varepsilon)\right\|: (\tilde{t},s,{\bf h},\varepsilon) \in \mathbb{R} \times \R \times \{0\} \times (0,\varepsilon_0] \right\}\leq M,
\]
and
\[
	\sup \left\{\alpha_Z{(\tilde{t}/\e^\ell,s,{\bf h},\varepsilon)}: (\tilde{t},s,{\bf h},\varepsilon) \in \mathbb{R} \times \R \times \bar{B}_{n-1}(0,\rho) \times (0,\varepsilon_0] \right\}\leq M.
\]

Thus, it follows on the one hand that 
\begin{equation}\label{ineq:boundednessZ0}
	\|\varepsilon (C^{-1} \cdot Z)(\tilde{t}/\e^\ell,s,0,\e)\|=\|(C^{-1} \cdot Y)(s,0) + \varepsilon (C^{-1} \cdot Z)(\tilde{t}/\e^\ell,s,0,\e)\|  \leq \varepsilon M ,
\end{equation}
for all $(\tilde{t},s,\varepsilon) \in \mathbb{R} \times \mathbb{R} \times (0,\varepsilon_0]$, proving that (ii) is valid with $M(\varepsilon)=\varepsilon M$. On the other hand, the mean value inequality ensures that
\begin{equation}\label{ineq:lipschitz2}
	\|(C^{-1} \cdot Z)(\tilde{t}/\e^\ell,s_1,{\bf h_1},\varepsilon) - (C^{-1}\cdot Z)(\tilde{t}/\e^\ell,s_2,{\bf h_2},\varepsilon) \| \leq M \|(s_1,{\bf h_1}) - (s_2, {\bf h_2}) \|,
\end{equation}
for all $(s_1,{\bf h_1}), (s_2, {\bf h_2}) \in \mathbb{R} \times \bar{B}_{n-1}(0,\rho)$ and all $(\tilde{t},\varepsilon) \in \mathbb{R} \times (0,\varepsilon_0]$. Hence, combining (\ref{ineq:lipschitz1}) and (\ref{ineq:lipschitz2}), we conclude that 
\begin{equation}\label{ineq:lipschitz3}
	\begin{aligned}
	\|(C^{-1}\cdot Y+ \varepsilon C^{-1} \cdot Z)(\tilde{t}/\e^\ell,s_1,{\bf h_1},\varepsilon) - (C^{-1}\cdot Y+ \varepsilon C^{-1} \cdot Z)(\tilde{t}/\e^\ell,s_2,{\bf h_2},\varepsilon) \| \\ \leq (\mathcal{M}_L \sigma + \varepsilon M) \|(s_1,{\bf h_1}) - (s_2, {\bf h_2}) \|,
	\end{aligned}
\end{equation}
for $(s_1,{\bf h_1}),(s_2,{\bf h_2}) \in \R \times \bar{B}_{n-1}(0,\sigma)$ and $(\tilde{t},\varepsilon) \in \R \times (0,\e_0]$, ensuring that (iii) also holds.

Hence, all the hypotheses required for the application of Lemma \ref{lemmahale} are valid for system \eqref{eq:systemfinalformafterrescalinglambda}. Since $(C^{-1}\cdot Y)$ and $(t,s,{\bf h}) \mapsto (C^{-1}\cdot Z)(t,s,{\bf h},\varepsilon)$ are of class $C^{r-\ell}$, Proposition \ref{corollaryregularity1} may also be applied with $p=r-\ell$. Applying the above-mentioned results, we obtain $\varepsilon_1>0$ and families of functions $\{f_\varepsilon \in C^{r-\ell}(\mathbb{R}^2; \mathbb{R}^{n-d-1}): {\varepsilon \in (0,\varepsilon_1]}\}$ and $\{g_\varepsilon \in C^{r-\ell}(\mathbb{R}^2; \mathbb{R}^{d}): {\varepsilon \in (0,\varepsilon_1]}\}$ such that
\begin{enumerate}[label=\Roman*)]
	\item \label{propertyinvariance} For each $\varepsilon \in (0,\varepsilon_1]$, the set defined by the relation ${\bf h} = (f_\varepsilon(\tilde{t},s),g_\varepsilon(\tilde{t},s))$ is an invariant manifold for system 
	\begin{align}\label{eq:systemfinalformafterrescalingautonomized}
		\left[\begin{array}{c}
			s^\prime \\
			{\bf  h'}
		\end{array}\right] =
		\left[\begin{array}{c}
			1\\
			H {\bf h}
		\end{array}\right] +
		(C^{-1} \cdot Y)(s, {\bf h})+ \varepsilon (C^{-1} \cdot Z)(\tilde{t}/\e^\ell, s,{\bf h}, \varepsilon), \qquad
		\tilde{t}'=1.
	\end{align} 
	\item \label{propertyD} There is $D(\varepsilon)>0$ such that $\|f_\varepsilon\|_{C^0}\leq D(\varepsilon)$, $\|g_\varepsilon\|_{C^0}\leq D(\varepsilon)$ and $\lim_{\varepsilon \to 0} D(\varepsilon) = 0$. Furthermore, there is a constant $C_R>0$, depending only on the matrix $R$, such that $D(\e) = C_R M(\e) = C_R \e M$.
	\item \label{propertydelta} There is $\Delta(\varepsilon)>0$ such that $f_\varepsilon$ and $g_\e$ are Lipschitz continuous in $s$ with Lipschitz constant $\Delta(\varepsilon)$ and $\lim_{\varepsilon \to 0 } \Delta(\varepsilon) = 0$.
	\item\label{propertyperiodic} $f_\e$ is $\omega$-periodic in $s$ and $g_\e$ satisfies $g_\e(\tilde{t},s+\omega) = -g(\tilde{t},s)$.
	\item  \label{propertyperiodicT} $f_\e$ and $g_\e$ are $\e^\ell T$-periodic in $\tilde{t}$.
	\item \label{propertystability}Let $\pi_1: \R^{n-d-1}\times \R^d \to \R^{n-d-1}$ and $\pi_2: \R^{n-d-1} \times \R^d \to \R^d$ be the canonical projections. Also, let $k_1\leq n-d-1$ and $k_2\leq d$ of the eigenvalues of $R_1$ and $R_2$, respectively have negative real parts. There are positive constants $r$, $\lambda$, $C$, $\sigma_0$, and $\sigma_1$ such that $r<\sigma_0$, $D(\e)<\sigma_0<\sigma_1<\rho$, and, for each $(\tilde{t}_0,s_0,\varepsilon) \in \mathbb{R} \times \mathbb{R} \times (0,\varepsilon_1]$, there is in $\bar{B}_{n-d-1}(0,\sigma_0) \times \bar{B}_d(0,\sigma_0)$ a local $(k_1+k_2)$-dimensional embedded submanifold $S(t_0,s_0,\e)$ of $\R^{n-d-1} \times \R^d$, containing the point $(f_\e(\tilde{t}_0,s_0),g_\e(\tilde{t}_0,s_0))$, and having the following properties:
	\begin{enumerate}[label= VI.\arabic*)]
		\item \label{propertyescape} If ${\bf h_0} \in \bar{B}_{n-d-1}(0,\sigma_0) \times \bar{B}_d(0,\sigma_0) \setminus S(\tilde{t}_0,s_0,\varepsilon)$, there is $\tilde{t}_*>\tilde{t}_0$ for which $${\bf h}(\tilde{t}_*,\tilde{t}_0,s_0,{\bf h_0},\e) \notin \bar{B}_{n-d-1}(0,\sigma_1) \times \bar{B}_d(0,\sigma_1).$$
		\item \label{propertydecay} Reciprocally, if ${\bf h_0} \in S(\tilde{t}_0,s_0,\varepsilon)$, then, for all $\tilde{t}\geq \tilde{t}_0$, $${\bf h}(\tilde{t},\tilde{t}_0,s_0,{\bf h_0},\e) \in \bar{B}_{n-d-1}(0,\sigma_1) \times \bar{B}_d(0,\sigma_1)$$ and the following inequality holds:
		$$\begin{aligned}
			\left\|{\bf h}(\tilde{t},\tilde{t}_0,s_0,{\bf h_0},\e) - {\bf h}(\tilde{t},\tilde{t}_0,s_0,(f_\e(\tilde{t}_0,s_0),g_\e(\tilde{t}_0,s_0)),\e)\right\|\\\leq C e^{-\lambda (\tilde{t}-\tilde{t}_0)} \|{\bf h_0}-(f_\e(\tilde{t}_0,s_0),g_\e(\tilde{t}_0,s_0))\|.
		\end{aligned}$$
		\item\label{propertyphi}There is a continuous function $\phi^\e_1: \mathbb{R} \times \mathbb{R} \times \bar{B}_{k_1}(0,r) \times \bar{B}_{k_2}(0,r) \to \bar{B}_{n-d-k_1-1}(0,\sigma_0)$ such that \[\pi_1(S(\tilde{t}_0,s_0,\varepsilon)) = \{(\phi^\e_1(\tilde{t}_0,s_0, \xi_1,\xi_2),\xi_1) :(\xi_1,\xi_2) \in \bar{B}_{k_1}(0,r) \times \bar{B}_{k_2}(0,r)\}.\] Similarly, there is a continuous function $\phi^\e_2: \mathbb{R} \times \mathbb{R} \times \bar{B}_{k_1}(0,r) \times \bar{B}_{k_2}(0,r)  \to \bar{B}_{d-k_2}(0,\sigma_0)$ such that $$\pi_2(S(\tilde{t}_0,s_0,\varepsilon)) = \{(\phi^\e_2(\tilde{t}_0,s_0,\xi_1, \xi_2),\xi_2) :(\xi_1,\xi_2) \in \bar{B}_{k_1}(0,r) \times \bar{B}_{k_2}(0,r)\}.$$ 
		\item \label{propertyphiperiodic} The functions $\phi_1^\e$ and $\phi_2^\e$ satisfy
		\[
		\phi_1^\e(\tilde{t}_0,s_0+\omega,\xi_1,\xi_2) = \phi_1^\e(\tilde{t}_0,s_0,\xi_1,-\xi_2)
		\]
		and 
		\[
		\phi_2^\e(\tilde{t}_0,s_0+\omega,\xi_1,\xi_2) = 	-\phi_2^\e(\tilde{t}_0,s_0,\xi_1,-\xi_2)
		\]
		for all $(\tilde{t}_0,s_0,\xi_1,\xi_2) \in \mathbb{R} \times \mathbb{R} \times \bar{B}_{k_1}(0,r) \times \bar{B}_{k_2}(0,r)$.
	\end{enumerate}
\end{enumerate}

Let $\mathcal{X}$ be the function associated to the change of coordinates we have performed, that is, 
\[
\mathcal{X}(s,{\bf h})  =  \varphi(s)  + Q(s) \cdot {\bf h}.
\]
Define $w_\e:\R \times \R \to \R \times \R^n$ by $$w_\varepsilon(\tau,s):=\Big(\tau,\mathcal{X}\left(s,\big(f_\e(\varepsilon^\ell \tau,s),g_\e(\e^\ell\tau,s)\big)\right)\Big).$$
Since $\mathcal{X}$ is injective for $s \in [0,\omega)$ and $\|{\bf h}\|\leq \rho$, the function $w_\e$ restricted to $\R \times [0,\omega)$ is injective. It is also clear that $\e_1$ can be taken sufficiently small as to ensure that $w_\e$ is an immersion for all $\e \in (0,\e_1]$.

It is easy to see that property \ref{propertyperiodic} above guarantees that $w_\e$ is $\omega$-periodic in $s$. In fact, considering \eqref{eq:qs+omega} and the definition of $A$ given in \eqref{eq:definitionA}, we have that
\[
	w_\e(\tau,s+\omega) = \Big(\tau,\varphi(s) + Q(s) \, A \cdot \big(f_\e(\varepsilon^\ell \tau,s),-g_\e(\e^\ell\tau,s)\big) \Big) = w_\e(\tau,s).
\]
Thus, $W_\e:=\{w_\e(\tau,s) : (\tau,s ) \in \R \times \R\}\subset \R \times \R^n$ is an embedded cylinder of class $C^{r-\ell}$ that is invariant under the flow of 
\begin{align} \label{eq:systemextendedproof}
	\left\{ \begin{array}{@{}l@{}}
		\bz' = \displaystyle \e^{\ell}\bg_{\ell}(\bz)+\e^{\ell+1} r_{\ell}(\bz,\tau,\e), \\
		\tau'=1.
	\end{array} \right.
\end{align}
Property \ref{propertyperiodicT} ensures that $w_\e$ satisfies $w_\e(\tau+T,s) = (T,0) +w_\e(\tau,s)$. We can thus consider $\tau$ an angular variable modulo $T$ in \eqref{eq:systemextendedproof}, and $W_\varepsilon$ becomes an invariant torus in $\mathbb{S}^1 \times \R^n$. Finally, the torus $M_\e$, invariant under \eqref{eq:systemextended}, is obtained from $W_\e$ by reverting the near-identity periodic transformation $\bx = U(\tau,\bz,\e)$ that we employed in the beginning of the proof. This proves the existence of $M_\e$ stated in Theorem \ref{maintheorem}. The fact that there is a neighborhood $V$ of $\Gamma$ such that any compact manifold that is invariant under \eqref{eq:systemextended} and contained in $\s^1 \times V$ must also be contained in $M_\e$ follows from Corollary \ref{corollaryunique}. 

We proceed to proving the statement regarding regularity of $M_\varepsilon$ in Theorem \ref{maintheorem}. Define $\mathcal{F}_\varepsilon$ by $$\mathcal{F}_\varepsilon(\tau,s) = U\Big(\tau,\mathcal{X}\left(s,\big(f_\e(\varepsilon^\ell \tau,s),g_\e(\e^\ell\tau,s)\big)\right),\e\Big).$$ 
Observe that $\{\mathcal{F}_\varepsilon\}_\varepsilon$ is a family of $C^{r-\ell}$ functions that are also $\omega$-periodic in $s$ and $T$-periodic in $\tau$, and that $M_\varepsilon$ is given by the relation $\bx = \mathcal{F}_\varepsilon(\tau,s)$, i.e., 
\[
M_\e = \{(\tau,\mathcal{F}_\varepsilon(\tau,s)) \in \s^1 \times \R^n : (\tau,s) \in \R \times \R \}.
\]
Moreover, by Proposition \ref{corollaryregularity2}, it follows that the family $\{\mathcal{F}_\varepsilon\}_\varepsilon$ is $C^0$-continuous, that is, continuous in the $C^0$-norm, provided that $\e_1$ is chosen sufficiently small. In fact, this Proposition guarantees that this family is $C^1$ in the $C^0$-norm.

Regarding the statement about convergence, observe that it follows from property \ref{propertyD} that there is $D^*(\varepsilon)$ such that $$\left\|\mathcal{X}\left(s,\big(f_\e(\varepsilon^\ell \tau,s),g_\e(\e^\ell\tau,s)\big)\right) - \varphi(s)\right\|<D^*(\varepsilon)$$ and $\lim_{\varepsilon \to 0} D'(\varepsilon) = 0$. Then, considering that $U$ is locally Lipschitz in its second argument and that both functions appearing inside the norm of the inequality above are periodic, it follows that there is $\delta(\varepsilon)\geq0$ such that $\delta(0)=0$ and $\|\mathcal{F}_\varepsilon(\tau,s) - U(\tau,\varphi(s),\varepsilon)\|<\delta(\varepsilon)$. 

\subsection{Stability} \label{subsec:stab}
Let the non-negative integers $k_1\leq n-d-1$ and $k_2\leq d$ denote the number of eigenvalues with negative real parts of the matrices $R_1$ and $R_2$ respectively. Define the function $q_\e:\R \times \R \times B_{k_1}(0,r) \times B_{k_2}(0,r) \to \R \times \R^n$ by $$q_\e(\tau,s,\xi_1,\xi_2) = \Big( \tau, U\Big(\tau,\mathcal{X}\left(s,\big(\phi_1^\e(\e^\ell \tau,s,\xi_1,\xi_2),\xi_1,\phi_2^\e(\e^\ell \tau,s,\xi_1,\xi_2),\xi_2\big)\right), \,\varepsilon\Big)\Big).$$
Let $S_{M_\e}$ be the image of $q_\e$. We will show that $S_{M_\e}$ is an embedded submanifold in $\R \times \R^n$. 

For convenience, we denote by $q_\e |_{I}$ the restriction of $q_\e$ to the set $\R \times I \times B_{k_1}(0,r) \times B_{k_2}(0,r)$, where $I \subset \R$. Observe that the properties of $\phi_1^\e$ and $\phi_2^\e$ given in $\ref{propertyphiperiodic}$, along with \eqref{eq:qs+omega}, ensure that $S_{M_\e}$ is contained in the image of $q_\e |_{[0,\, \omega)}$. Hence, $S_{M_\e}$ is contained in the union of the images of $q_\e |_{(0,\omega)}$ and $q_\e|_{(-\frac{\omega}{2},\frac{\omega}{2})}$ .

Now, since $r\leq\sigma_0\leq\rho$, $\mathcal{X}(s,{\bf h})$ is injective for $(s, {\bf h}) \in [0,\omega) \times \bar{B}_{n-1}(0,\sigma_0)$, ensuring that $q_\e |_{(0,\omega)}$ and $q_\e|_{(-\frac{\omega}{2},\frac{\omega}{2})}$ are injective. It is then easy to see that $q_\e |_{(0,\omega)}$ and $q_\e|_{(-\frac{\omega}{2},\frac{\omega}{2})}$ are homeomorphisms onto their images, proving that $S_{M_\e}$ is a $(k_1+k_2+2)$-dimensional embedded submanifold of $\R \times \R^n$. As remarked before, in Section \ref{subsec:cv}, if $k$ is the number of characteristic multipliers of the limit cycle $\Gamma$ whose absolute values are less than $1$, then $k=k_1+k_2$. Thus, $S_{M_\e}$ is $k$-dimensional.

We will prove that $S_{M_\e}$ is locally the stable set of $M_\e$. Suppose that ${\bf h_0} \in S(\tilde{t}_0,s_0,\e)$. For convenience, let us define $s_*(\tilde{t}):= s(\tilde{t},\tilde{t}_0,s_0,{\bf h_0},\e)$, $s_{M_\e}(\tilde{t}):=s(\tilde{t},\tilde{t}_0,s_0,(f_\e(\tilde{t}_0,s_0),g_\e(\tilde{t}_0,s_0)),\e)$, ${\bf h_*}(\tilde{t}):={\bf h}(\tilde{t},\tilde{t}_0,s_0,{\bf h_0},\e)$, and finally ${\bf h}_{M_\e}(\tilde{t}):={\bf h}(\tilde{t},\tilde{t}_0,s_0,(f_\e(\tilde{t}_0,s_0),g_\e(\tilde{t}_0,s_0)),\e)$. Also, let
\[
	u(\tilde{t}):=\|s_*(\tilde{t}) - s_b(\tilde{t})\| + \|{\bf h_*}(\tilde{t}) - {\bf h}_{M_\e}(\tilde{t})\|.
\]
Observe that, considering \eqref{eq:systemfinalformafterrescalinglambda}, along with the boundedness and Lipschitz continuity properties that $\Lambda_0$ and $\tilde{\Lambda}_0$ are proved to satisfy, we have that
\[
	u(\tilde{t}) \leq \|{\bf h_*}(\tilde{t}) + {\bf h}_{M_\e}(\tilde{t})\| +\int_{\tilde{t}_0}^{\tilde{t}} (\mathcal{M}_L \sigma_1 + \varepsilon M) u(x) dx.
\]
Thus, considering property $\ref{propertydecay}$ and applying Grönwall's inequality, it follows that
\[
u(\tilde{t}) \leq C e^{(-\lambda+\mathcal{M}_L \sigma_1 + \varepsilon M) (\tilde{t}-\tilde{t}_0)} \|{\bf h_0}-(f_\e(\tilde{t}_0,s_0),g_\e(\tilde{t}_0,s_0))\|.
\]
Hence, if $\rho$ and $\e_1$ are chosen sufficiently small, we ensure that $u(\tilde{t}) \to 0$ as $\tilde{t} \to \infty$. Thus, it follows that, if ${\bf h_0} \in S(\tilde{t}_0,s_0,\e)$, then 
\begin{equation}\label{eq:stability1}
	\lim_{\tilde{t} \to \infty}\|\mathcal{X}(s_*(\tilde{t}),{\bf h_*}(\tilde{t})) - \mathcal{X}(s_{M_e}(\tilde{t}),{\bf h}_{M_\e}(\tilde{t})) \| =0.
\end{equation}

Since $\sigma_1<\rho$, we know that $\mathcal{X}(s,{\bf h})$ is injective for $(s, {\bf h}) \in [0,\omega) \times \bar{B}_{n-1}(0,\sigma_1)$. Consider the following neighborhoods of $M_\e$:
\[
	\begin{aligned}
	V_s := \left\{\left(\tau,U\left(\tau,\mathcal{X}(s,{\bf h}),\e\right)\right): (\tau,s,{\bf h}) \in \R \times \R \times B_{n-1}(0,\sigma_1) \right\}, \\
	W_s := \left\{\left(\tau,U\left(\tau,\mathcal{X}(s,{\bf h}),\e\right)\right): (\tau,s,{\bf h}) \in \R \times \R \times B_{n-1}(0,\sigma_0) \right\}.
	\end{aligned}
\]
It is then clear, considering \eqref{eq:stability1} and the fact that $M_\e$ is an invariant manifold, that the local stable set of $M_\e$ with respect to $V_s$ satisfies $\mathcal{S}^{V_s}_{M_\varepsilon} \cap W_s = S_{M_\e}$.

The same argument with time reversed proves the analogous statement for the local unstable set $\mathcal{U}^{V_u}_{M_\varepsilon} \cap W_u$. In this case, the dimension of the manifold obtained is $2+(n-1-k) = n-k+1$, because the number of eigenvalues of $R$ with positive real part is $n-1-k$.

\subsection{Dynamics} \label{subsec:dyn}
Let $S_\varepsilon\subset \R^{n+1}$ be defined as the section $\tau=0$ of the torus $M_\varepsilon$, that is, the image of the real $1$-periodic function $\Pi_\varepsilon: \theta\mapsto (0,\mathcal{F}_\varepsilon(0,\omega \theta))$. It is clear that $S_\varepsilon$ is $C^{r-\ell}$-diffeomorphic to the circle $S^1$. Once more, let $t \mapsto (s(t,t_0,s_0,{\bf h_0},\varepsilon),{\bf h}(t,t_0,s_0,{\bf h_0},\varepsilon))$ be the solution of \eqref{eq:systemfinalformbeforerescaling} satisfying $(s(t_0,t_0,s_0,{\bf h_0},\varepsilon),{\bf h}(t_0,t_0,s_0,{\bf h_0},\varepsilon)) = (s_0,{\bf h_0})$. Define, for $(\nu,\theta) \in \R \times \R$,
\[s_\varepsilon(\nu,\theta):=s(\nu T,0,\theta,(f_\varepsilon(0,\theta),g_\varepsilon(0,\theta)),\varepsilon). \]

Since $\tau'=1$ in $\eqref{eq:systemextended}$, it follows that the first-return map $p_\varepsilon$ defined on $S_\varepsilon$ under the action of this differential system is well defined. Moreover, it is clear that 
\[
	p_\varepsilon\big(\Pi_\varepsilon(\theta) \big) = \Pi_\varepsilon\left(\frac{s_\varepsilon(1,\omega \theta)}{\omega}\right).
\]
Thus, the real function $$\tilde{p}_\varepsilon:\theta \mapsto \frac{s_\varepsilon(1,\omega \theta)}{\omega}$$ is a lift of $p_\varepsilon$ with respect to the covering map $\Pi_\varepsilon:\R \to S_\varepsilon$. Moreover, this ensures that $p_\varepsilon$ is at least of class $C^{r-\ell}$.
 
Observe that $$\tilde{p}_\varepsilon^n(\theta) = \frac{s_\varepsilon(n,\omega\theta)}{\omega}$$ for all $n \in \mathbb{N}$.
Then, it is clear that the rotation number of $p_\varepsilon$ is given by
\[
	\rho(\varepsilon) : = \lim_{n \to \infty} \frac{\tilde{p}_\e^n(\theta)-\theta}{n} = \lim_{n \to \infty} \frac{s_\varepsilon(n,\omega \theta) - \omega\theta}{n\omega}.
\]
We will rewrite this limit so as to be able to calculate it up to $\ell$-th order of $\e$.

Integrating the first equation of \eqref{eq:systemfinalformbeforerescalinglambda} from $t=0$ to $t=nT$, we obtain
\begin{equation}\label{eq:dynamics1}
	\begin{aligned}
		s_\varepsilon(n,\theta) = &\theta + \varepsilon^\ell nT \\+&  \varepsilon^\ell \int_0^{nT} \Lambda_0\big(s(\tau,0,\theta,(f_\varepsilon(0,\theta),g_\varepsilon(0,\theta)),\varepsilon),{\bf h} (\tau,0,\theta,(f_\varepsilon(0,\theta),g_\varepsilon(0,\theta)),\e)\big) d\tau \\
		+&  \e^{\ell+1}  \int_0^{nT} \tilde{\Lambda}_0\big(\tau,s(\tau,0,\theta,(f_\varepsilon(0,\theta),g_\varepsilon(0,\theta)),\varepsilon),{\bf h} (\tau,0,\theta,(f_\varepsilon(0,\theta),g_\varepsilon(0,\theta)),\e),\e)\big) d\tau .
	\end{aligned}
\end{equation}
Thus, we can define the sequence of functions
\[
	\begin{aligned}
	G_n(\theta,\varepsilon) :=&\frac{1}{n} \int_0^{nT} \frac{\Lambda_0}{\e}\big(s(\tau,0,\theta(f_\varepsilon(0,\theta),g_\varepsilon(0,\theta)),\varepsilon),{\bf h} (\tau,0,\theta,(f_\varepsilon(0,\theta),g_\varepsilon(0,\theta)),\e)\big) d\tau\\
	&+ \frac{1}{n}\int_0^{nT}\tilde{\Lambda}_0\big(\tau,s(\tau,0,\theta,(f_\varepsilon(0,\theta),g_\varepsilon(0,\theta)),{\bf h} (\tau,0,\theta,(f_\varepsilon(0,\theta),g_\varepsilon(0,\theta)),\e)\big) d\tau,
	\end{aligned}
\]
so that \eqref{eq:dynamics1} becomes 
\[
	s_\varepsilon(n,\theta) = \theta + \varepsilon^\ell nT + \varepsilon^{\ell+1} n \, G_n(\theta,\e).
\]

Since $M_\varepsilon$ is an invariant manifold, it is clear that
\[
\begin{aligned}
	{\bf h} (\tau,0,\theta,(f_\e(0,\theta),g_\e(0,\theta)),\e) = (f_\varepsilon(\tau,s(\tau,0,\theta,(f_\e(0,\theta),g_\e(0,\theta)),\e)),g_\varepsilon(\tau,s(\tau,0,\theta,(f_\e(0,\theta),g_\e(0,\theta)),\e))).
\end{aligned}
\]
Then, by changing the variables in the integral, it follows that
\[
\begin{aligned}
	G_n(\theta,\varepsilon) :=& \int_0^{T} \frac{\Lambda_0}{\e}\big(s(n\tau,0,\theta,(f_\varepsilon(0,\theta),g_\varepsilon(0,\theta)),\varepsilon),{\bf h} (n\tau,0,\theta,(f_\varepsilon(0,\theta),g_\varepsilon(0,\theta)),\e)\big) d\tau\\
	&+ \int_0^{T}\tilde{\Lambda}_0\big(n\tau,s(n\tau,0,\theta,(f_\varepsilon(0,\theta),g_\varepsilon(0,\theta)),\varepsilon),{\bf h} (n\tau,0,\theta,(f_\varepsilon(0,\theta),g_\varepsilon(0,\theta)),\e),\e)\big) d\tau.
\end{aligned}
\]
Observe that
\[
\|\Lambda_0(s,{\bf h})  + \e \tilde{\Lambda}_0(t,s,{\bf h},\e)\| \leq \|\Lambda_0(s,{\bf h})  + \e \tilde{\Lambda}_0(t,s,{\bf h},\e) - \e \tilde{\Lambda}_0(t,s,0,\e)\| + \|\e \tilde{\Lambda}_0(t,s,0,\e)\|
\]
for all $(t,s,{\bf h},\e) \in \R \times \R \times B_{n-1}(0,\rho) \times (0,\e_0]$. Then, considering that $\Lambda(s,0)=0$, it follows from \eqref{ineq:boundednessZ0} and \eqref{ineq:lipschitz3} that
\[
	\|\Lambda_0(s,(f_\e(n\tau,s),g_\e(n\tau,s))) + \tilde{\Lambda}_0(n\tau, s,(f_\e(n\tau,s),g_\e(n\tau,s)),\e)\| \leq  \mathcal{M}_L (\|f_\e\| + \|g_\e\|)  + \e M
\]
for all $(s,\tau,\varepsilon) \in \R \times \R \times (0,\varepsilon_1]$. Then, from property $\ref{propertyD}$, it follows that
\[
	\|\Lambda_0(s,(f_\e(n\tau,s),g_\e(n\tau,s))) + \tilde{\Lambda}_0(n\tau, s,(f_\e(n\tau,s),g_\e(n\tau,s)),\e)\| \leq C_G \varepsilon,
\]
where $C_G:=2\mathcal{M}_L C_R M +M$. Hence, it is easy to see that
\begin{equation} \label{eq:boundGn}
	|G_n(\theta,\e)| \leq C_G T
\end{equation}
for all $\theta \in \R$ and all $\e \in (0,\e_1]$. 

Considering that
\[
\frac{s_\varepsilon(n,\omega\theta)-\omega \theta}{n \omega} = \varepsilon^\ell  \frac{T}{\omega}  +  \e^{\ell+1} \frac{G_n(\omega\theta,\e)}{\omega},
\]
and since the limit 
\[
\lim_{n \to \infty} \frac{s_\varepsilon(n,\omega\theta)-\omega\theta}{n\omega}
\]corresponding to the rotation number exists and does not depend on $\theta$, it is ensured that
\[
	G(\e): = \lim_{n \to \infty} G_n(\omega \theta,\e) 
\]
is well defined. Moreover, from \eqref{eq:boundGn}, it is clear that $|G(\varepsilon)|\leq C_G T$. Hence, it follows at once that
\[
\rho(\varepsilon) = \lim_{n \to \infty} \varepsilon^\ell \frac{T}{\omega}  +  \e^{\ell+1} \frac{G_n(\omega\theta,\e)}{\omega} = \varepsilon^\ell \frac{T}{\omega}  + \e^{\ell+1} \frac{G(\e)}{\omega} = \e^\ell \frac{T}{\omega}+  \mathcal{O}(\varepsilon^{\ell+1}).
\]
By Proposition \ref{corollaryregularity3} combined with the definition of $\tilde{p}_\e$, it follows that the family $\{\tilde{p}_\e\}_\e$ is continuous in the space of homeomorphisms of $\s^1$ with the $C^0$ topology. Hence, $\rho$ is continuous in $(0,\e_1]$. Since we also know that system \eqref{eq:systemextended} becomes $\tau'=1, \bx'=0$, when $\e=0$, it follows that $\rho(0)=0$, so that $\rho$ is actually continuous in $[0,\e_1]$. In particular, the relation $\omega \rho(\varepsilon) = \e^{\ell} T + \e^{\ell+1} G(\e)$ ensures that $G$ is also continuous in $(0,\e_1]$.

In order to prove the rest of the statement concerning Dynamics of Theorem \ref{maintheorem}, we will make use of the following result, which can be found in \cite[Theorem 6.1]{herman}.

\begin{theorem} \label{theoremHerman}
	Let $\gamma\geq 3$ and $D^\gamma(\s^1)$ be the class of $C^{\gamma}$-diffeomorphisms of the circle $\s^1$ endowed with the norm $C^\gamma$. Let $c:[a,b] \to D^\gamma(\s^1)$ be a continuous path satisfying: $c$ is of class $C^1$ if considered as a function on $D^0(\s^1)$. Let $\rho(\la)$ denote the rotation number of $c(\la)$, $\la\in[a,b]$. If $\rho(a)\neq \rho(b)$, then the Lebesgue measure $\lambda$ of the set
	\[
	\{x \in [a,b]: c(x) \;\text{is}\; C^{\gamma-2}\text{-conjugated to an irrational rotation}\}
	\]
	is strictly positive. Also, $\rho$ maps zero Lebesgue measure sets to zero Lebesgue measure sets.
\end{theorem}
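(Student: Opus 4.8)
The plan is to derive Theorem \ref{theoremHerman} from two ingredients: a \emph{local} linearization theorem (KAM on the circle) and a \emph{global} measure-theoretic property of the rotation-number function. Write $c_\la:=c(\la)$, and for $\tau>1$ let $\mathrm{DC}(\tau)$ be the set of irrationals $\a$ for which there is $\kappa=\kappa(\a)>0$ with $\|q\a\|\ge\kappa q^{-\tau}$ for all $q\ge1$; for every $\tau>1$ the set $\mathrm{DC}(\tau)$ has full Lebesgue measure, so $\mathrm{DC}:=\bigcup_{\tau>1}\mathrm{DC}(\tau)$, the set of non-Liouville irrationals, has full measure as well. The two ingredients are: \textbf{(A)} (Herman--Yoccoz linearization) if $\rho(\la)\in\mathrm{DC}$, then $c_\la$ is $C^{\g-2}$-conjugate to $R_{\rho(\la)}$ (up to a loss of regularity by an arbitrarily small $\e>0$, which is immaterial below and which can in fact be removed by the sharp versions of Herman, Yoccoz, and Katznelson--Ornstein), and its unique invariant probability measure $\mu_\la$ is absolutely continuous; and \textbf{(B)} the function $\rho:[a,b]\to\R$ carries Lebesgue-null sets to Lebesgue-null sets, and, when $\rho(a)\neq\rho(b)$, the set $\rho^{-1}(E)$ has positive Lebesgue measure for every full-measure set $E\subset\R$. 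Granting these, the set appearing in the statement contains $\rho^{-1}(\mathrm{DC})$ by (A), hence has positive measure by (B) because $\mathrm{DC}$ has full measure; and the last assertion of the theorem is precisely the first half of (B).

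For ingredient (A) I would run the classical circle-KAM scheme and cite \cite{herman} together with its later refinements for the execution. After lifting $c_\la$, a Denjoy-type a priori estimate (available since $\g\ge3>2$) produces a topological conjugacy and $C^0$ control of $\log Dc_\la$; linearizing the conjugacy equation $h\circ c_\la=R_{\rho(\la)}\circ h$ about the identity yields a cohomological equation $\psi(\cdot+\rho(\la))-\psi(\cdot)=\varphi-\widehat\varphi(0)$, which is solvable with a controlled loss of derivatives because $\rho(\la)$ is Diophantine, and a Newton iteration with Moser smoothing converges in the stated regularity. Absolute continuity and smoothness of $\mu_\la$ follow from $\mu_\la=(h_\la^{-1})_*\mathrm{Leb}$ and the smoothness of $h_\la$.

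Ingredient (B) is where the real difficulty lies, and I expect it to be the main obstacle; it amounts to a two-sided comparison between Lebesgue measure on $[a,b]$ and Lebesgue measure on the rotation-number axis transported by $\rho$. The starting observation is that at every $\la_0$ with $\rho(\la_0)\in\mathrm{DC}$ the conjugacy $h_\la$ depends differentiably on $\la$ near $\la_0$, using that $\la\mapsto c_\la$ is $C^1$ into $C^0$ together with the smooth dependence supplied by (A); differentiating the conjugacy relation in $\la$ then shows that $\rho$ is differentiable at $\la_0$, with the classical value $\rho'(\la_0)=\int_{\s^1}\partial_\la c_\la|_{\la=\la_0}\,\d\mu_{\la_0}$. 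Combining this with \emph{uniform} a priori bounds, over the compact set of parameters whose rotation number lies in $\mathrm{DC}(\tau)$ for a fixed $\tau$ and fixed $\kappa$, on $\|\partial_\la c_\la\|_{C^0}$ and on the densities $\d\mu_\la/\d\mathrm{Leb}$, one extracts (following Herman) the key estimate: for a small interval $J$ inside the range of $\rho$ and centered at a Diophantine point, the part of $\rho^{-1}(J)$ on which $\rho$ is differentiable has Lebesgue measure comparable to $|J|$ from both sides. Both halves of (B) then follow by Vitali-type covering arguments: the upper comparison controls images of null sets (the Luzin N-property), while the lower comparison, applied along a full-measure family of Diophantine intervals exhausting the interval between $\rho(a)$ and $\rho(b)$, forces $\rho^{-1}(\mathrm{DC})$ to have positive measure. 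The genuinely delicate points, namely controlling the open ``mode-locking'' set on which $\rho$ is locally constant (and necessarily rational), disposing of the parameters where $\rho$ fails to be differentiable, and making all constants uniform over the relevant parameter set, are precisely what make Herman's proof long, and for these I would refer to \cite{herman}.
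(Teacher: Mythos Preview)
The paper does not prove this theorem at all: it is quoted verbatim as \cite[Theorem 6.1]{herman} and used as a black box in Subsection~\ref{subsec:dyn} to establish statement~e) of Theorem~\ref{maintheorem}. So there is no ``paper's own proof'' to compare against.

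Your proposal is a reasonable high-level outline of the ideas in Herman's original argument --- the KAM linearization for Diophantine rotation numbers combined with the measure-theoretic analysis of the rotation-number map --- and you are honest that the hard part (ingredient (B), the Luzin~N-property and the positive-measure preimage) is where the real work lies and where you would ultimately defer to \cite{herman}. That is fine as a summary, but it is not a self-contained proof, and the paper under review never intended to give one. If you are writing up this paper, the correct thing to do is simply cite the result; if you are trying to reprove Herman's theorem independently, be aware that the uniform control of the conjugacies and of the invariant densities over Diophantine parameter sets, and the handling of the mode-locking plateaus, are substantial and cannot be compressed into a paragraph.
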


Suppose that $r -\ell\geq 4$. Then, Propositions \ref{corollaryregularity2} and \ref{corollaryregularity3} ensure that $\e \mapsto \tilde{p}_\e$ satisfies the regularity conditions stated in Theorem \ref{theoremHerman} with $\gamma=r-\ell-1$.  Moreover, from the fact that $\omega\rho(\e) = \e^\ell T + \mathcal{O}(\e^{\ell+1})$, it is clear that there is an interval $[a,b] \subset (0,\e_1]$ such that $\rho(a) \neq \rho(b)$. Hence, there is a subset of $I\subset [a,b]$ of positive Lebesgue measure such that $\tilde{p}_\e$ is $C^{r-\ell-3}$-conjugated to an irrational rotation for all $\e \in I$.

\section{Invariant torus in 4D vector fields}\label{sec:app}
Consider the differential system \eqref{systemexample} under the assumptions established in subsection \ref{subsec:app}. By applying the cylindrical change of coordinates $(x,y,u,v)=(r\cos\theta,r\sin \theta,u,v)$, $r>0$, system \eqref{systemexample} becomes
\begin{equation} \label{eq:systemexampleaftercoordinatechange}
	\begin{alignedat}{2}
		\dot r& = & &\varepsilon ^N \left(\cos (\theta ) f_1(r \cos (\theta ),r \sin (\theta ),u,v)+ \sin (\theta ) f_2(r \cos (\theta ),r \sin (\theta ),u,v) \right)
		\\
		& & -&\frac{\varepsilon^{N+1}}{2}\, r^3 \mu  \left(r^2-\left(r^2+1\right) \cos (2 \theta )-1\right) + \mathcal{O}(\varepsilon^{N+2}), \\
		\dot \theta& = &1+  &\varepsilon^N \left(\frac{ \cos (\theta ) f_2(r \cos (\theta ),r \sin (\theta ),u,v)-\sin (\theta ) f_1(r \cos (\theta ),r \sin (\theta ),u,v)}{r}\right) \\
		& & - &\varepsilon^{N+1} \mu \left(r^2\sin (\theta ) \cos (\theta ) + r^4 \sin (\theta ) \cos (\theta ) \right) + \mathcal{O}(\varepsilon^{N+2}), \\
		\dot u& = & &\varepsilon ^N f_3(r \cos (\theta ),r \sin (\theta ),u,v) + \varepsilon^{N+1} r^2 \cos^2 (\theta) (u-u^3+v-u v^2 ) + \mathcal{O}(\varepsilon^{N+2}), \\
		\dot v& = & &\varepsilon ^N f_4(r \cos (\theta ),r \sin (\theta ),u,v) + \varepsilon^{N+1} r^2 \sin^2 (\theta) (v-u-u^2v -v^3 ) + \mathcal{O}(\varepsilon^{N+2}).\\ 
	\end{alignedat}
\end{equation}
Since $\dot \theta = 1 + \mathcal{O}(\varepsilon^2)>0$, it follows that $\dot \theta >0$ for $\varepsilon$ sufficiently small. Thus, we can take $\theta$ to be the independent variable, and system \eqref{eq:systemexampleaftercoordinatechange} becomes
\begin{equation} \label{eq:systemthetaindependent}
	\begin{aligned}
		&r' = \varepsilon^N R_N(\theta, r, u ,v) + \varepsilon^{N+1} R_{N+1} (\theta, r, u ,v) + \mathcal{O}(\varepsilon^{N+2}), \\
		&u' = \varepsilon^N U_N(\theta,r,u,v) + \varepsilon^{N+1} U_{N+1} (\theta,r,u,v) + \mathcal{O}(\varepsilon^{N+2}), \\
		&v' = \varepsilon^N V_N(\theta,r,u,v) + \varepsilon^{N+1} V_{N+1} (\theta,r,u,v) + \mathcal{O}(\varepsilon^{N+2}),
	\end{aligned} 
\end{equation}
where $'$ indicates derivative with respect to the variable $\theta$, and the functions $R_i$, $U_i$, and $V_i$, $i \in \{N,N+1\}$, are given by
\begin{equation}
	\begin{aligned}
		&R_N(\theta,r,u,v) = \cos (\theta ) f_1(r \cos (\theta ),r \sin (\theta ),u,v)+\sin (\theta ) f_2(r \cos (\theta ),r \sin (\theta ),u,v); \\
		&R_{N+1}(\theta,r,u,v) = \frac{1}{2} r^3 \mu \left(\left(r^2+1\right) \cos (2 \theta )-r^2+1\right); \\
		&U_N(\theta,r,u,v) = f_3(r \cos (\theta ),r \sin (\theta ),u,v); \\
		&U_{N+1}(\theta,r,u,v) = r^2 \cos ^2(\theta ) \left(-u^3-u v^2+u+v\right) ; \\
		&V_N(\theta, r ,u ,v) = f_4(r \cos (\theta ),r \sin (\theta ),u,v) ; \\
		&V_{N+1}(\theta, r, u ,v) = -r^2 \sin ^2(\theta ) \left(u^2 v+u+v^3-v\right).
	\end{aligned} 
\end{equation}
We remark that each of the functions defined above is $2\pi$-periodic in $\theta$. By defining $\bx=(r,u,v)$, system \eqref{eq:systemthetaindependent} can be written as 
\begin{equation}
	\bx' = \varepsilon^N F_N(\theta,\bx) + \varepsilon^{N+1} F_{N+1}(\theta,\bx) + \varepsilon^{N+2} \tilde{F} (\theta,\bx,\varepsilon),
\end{equation}
where 
\begin{equation}
	F_i(\theta,\bx) = (R_i(\theta,\bx),U_i(\theta,\bx),V_i(\theta,\bx)).
\end{equation}
Using formulas \eqref{avfunc} and \eqref{yi}, we can calculate the Melnikov function of order $N$ for this system as
\begin{equation}
	\f_N(\bx) = \int_0^{2\pi} F_N(s,\bx) ds.
\end{equation} 

Since, by hypothesis, the average of functions $R_N$, $U_N$, and $V_N$ over $\theta \in [0,2\pi]$ vanish identically, then it follows that $\f_N=0$, so that formulas \eqref{avfunc} and \eqref{yi} provide
\begin{equation}
\begin{aligned}
	\f_{N+1}(\bx) = &\int_0^{2\pi} F_{N+1}(s,\bx) ds \\
	=& \left(\mu \frac{r^3}{2} \left(1-r^2\right), \frac{r^2}{2} \left(-u^3-u v^2+u+v\right),-\frac{r^2}{2}  \left(u^2 v+u+v^3-v\right)\right),
	\end{aligned}
\end{equation} 
because $F_i=0$ for all $i \in \{1,2,\ldots,N-1\}$ and $y_1=0$ in this case. Thus, since it is clear that $\f_i =0$ for all $i \in \{1,2,\ldots,N-1\}$, it follows from Proposition \ref{prop1} that 
\[
\bg_{N+1} (\bx) = \frac{1}{2\pi} \f_{N+1} (\bx).
\]

Let us prove that the guiding system $\bx' = {\textbf g}_{N+1}(\bx)$ has a hyperbolic limit cycle. First, observe that the curve 
\[
\gamma(t)= \left(1,\cos\left(\frac{t}{4\pi}\right),-\sin\left(\frac{t}{4\pi}\right)\right)
\]
satisfies
\[
\gamma' (t) = \left(0, -\frac{1}{4\pi} \sin \left(\frac{t}{4\pi}\right), -\frac{1}{4\pi} \cos \left(\frac{t}{4\pi}\right)\right) = {\textbf g}_{N+1}(\gamma(t)),
\]
and is therefore a $8\pi^2$-periodic orbit of $\bx' = {\textbf g}_{N+1}(\bx)$. Define $\Gamma$ as the image of $\gamma(t)$. Notice that $\Gamma=\{1\}\times\mathbb{S}^1$.

In order to show that $\Gamma$ is indeed a hyperbolic limit cycle, we shall find the eigenvalues of the Poincaré map $P$ associated to it. Observe that 
\[
\text{div} \, {\textbf g}_{N+1} (\bx) = -\mu \frac{5r^4}{4\pi}  +\frac{r^2}{4\pi} \left(2+3\mu -4u^2-4v^2\right).
\]
By \cite[Corollary 12.5]{teschl}, we know that the determinant of the derivative of $P$ at a point $\bx_0$ in the periodic orbit $\Gamma$ is equal to the determinant of the monodromy matrix associated to $\Gamma$. Thus, by Liouville's formula, we have
\begin{equation} \label{eq:determinantdp}
\det(DP(\bx_0)) = \exp \int_0^{8\pi^2} \text{div} \, {\textbf g}_{N+1} (\gamma(s)) \, ds = e^{-4\pi(1+\mu)}.
\end{equation}

Since the surface given by $r=1$ is an invariant manifold for this system, we can also study $\Gamma$ as a periodic orbit of the system $\bx' = {\textbf g}_{N+1}(\bx)$ restricted to such surface, which is the planar system $(u',v') = \bar{{\textbf g}}_{n+1}(u,v)$, given by
\begin{equation}
	\begin{aligned}
		&u' = \frac{1}{4\pi}(-u^3-uv^2+u+v), \\
		&v' = -\frac{1}{4\pi} (u^2 v+u+v^3-v).
	\end{aligned}
\end{equation}
Let $L$ be the intersection of the surface $r=1$ with the transversal section corresponding to the Poincaré map $P$. Then, once again by \cite[Corollary 12.5]{teschl}, the determinant of derivative of the restriction $P|_L$ at $\bx_0$ is given by
\[
\det(D(P|_L)(\bx_0)) = \exp \int_0^{8\pi^2} \text{div} \, \bar{{\textbf g}}_{N+1} \left(\cos\left(\frac{s}{4\pi}\right),-\sin\left(\frac{s}{4\pi}\right)\right) ds = e^{-4\pi}.
\]
Since $D(P|_L)(\bx_0)$ acts on a one-dimensional space, it follows that its eigenvalue is equal to $e^{-4\pi}$. 

We have thus found one of the eigenvalues of $DP(\bx_0)$, to wit, $e^{-4\pi}<1$. In order to find the other, it suffices to notice that the determinant of $DP(\bx_0)$ must be equal to the product of its two eigenvalues. Therefore, it follows from \eqref{eq:determinantdp} that the other eigenvalue is $e^{-4\pi \mu}\neq1$. Hence, it follows that $\Gamma$ is a hyperbolic limit cycle and that the eigenvalues of the derivative of the Poincaré map associated to it are $\lambda_1=e^{-4\pi}$ and $\lambda_2=e^{-4\pi\mu}$.

Thus, Theorem \ref{maintheorem} ensures that there is $\varepsilon_0>0$ such that, for each $\varepsilon \in [0,\varepsilon_0]$, system 
\[
\theta' = 1, \quad \bx' = \varepsilon^N F_N(\theta,\bx) + \varepsilon^{N+1} F_{N+1}(\theta,\bx) + \varepsilon^{N+2} \tilde{F} (\theta,\bx,\varepsilon)
\]
admits an invariant torus $M_\varepsilon$ of class $C^{r-3}$. Moreover, $M_\varepsilon$ converges to $\s^1 \times \Gamma$ as $\varepsilon \to 0$. The stability of $M_\varepsilon$ is controlled by the parameter $\mu$. If $\mu=1$, then $M_\varepsilon$ is asymptotically stable, since $\mathcal{S}^{V_s}_{M_\varepsilon}$ locally becomes a neighborhood of $M_\varepsilon$. If, on the other hand, $\mu=-1$, then $\mathcal{S}^{V_s}_{M_\varepsilon}$ is locally a 3-dimensional manifold embedded in $\R^4$.

Transforming back to the original coordinates, we obtain, for each $\varepsilon \in [0,\varepsilon_0]$, an invariant torus $\mathbb{T}_\varepsilon$ converging as $\varepsilon \to 0$ to the torus $\mathbb{T}=\mathbb{S}^1\times\mathbb{S}^1$ parameterized by $(\theta, t)\in [0,2\pi] \times [0,2\pi] \mapsto \left(\cos \theta, \sin \theta, \cos t, -\sin t\right)$.

\section*{Appendix}
   
\subsection{Proof of Proposition \ref{corollaryregularity1}}

Let $\varepsilon \in (0,\varepsilon_1]$ be fixed throughout all the proof. We shall prove that $M_\varepsilon$ can be parameterized by a $C^p$ function $\alpha_\varepsilon(t,\theta)$. In fact, let $\tau \mapsto \varphi_\varepsilon(\tau,t_0,\theta_0,y_0,z_0)$ be the flow of system \eqref{systemlemmaautonomized} satisfying $\varphi_\varepsilon(0,t_0,\theta_0,y_0,z_0)=(t_0,\theta_0,y_0,z_0)$. Then, results about smooth dependence on initial conditions (see, for instance, \cite[Corollary 4.1 of Chapter V]{Hartman1964OrdinaryDE})  ensure that $\varphi_\varepsilon$ is of class $C^p$. Define $\alpha_\varepsilon:\mathbb{R}\times\mathbb{R}\to \mathbb{R} \times \R \times B_m(0,\sigma_0) \times B_n(0,\sigma_0)$ by
	\[
	\alpha_\varepsilon(t,\theta): = \varphi_\varepsilon(t,0,\theta,f_\e(0,\theta),g_\e(0,\theta)).
	\]
	Observe that statement $(e)$ of Lemma \ref{lemmahale} guarantees that $\alpha_\varepsilon$ is of class $C^p$.
	Let us prove that $\alpha_\varepsilon$ is injective and that its image is $M_\varepsilon$. 
	
	In order to prove that $\alpha_\varepsilon$ is injective, let $(t_1,\theta_1), (t_2,\theta_2) \in \mathbb{R}\times\mathbb{R}$ be such that $\alpha_\varepsilon(t_1,\theta_1) = \alpha_\varepsilon(t_2,\theta_2)$. Define the functions $t_\varepsilon(\tau,t_0,\theta_0,y_0,z_0)$, $\theta_\varepsilon(\tau,t_0,\theta_0,y_0,z_0)$, $y_\varepsilon(\tau,t_0,\theta_0,y_0,z_0)$, and $z_\varepsilon(\tau,t_0,\theta_0,y_0,z_0)$ as being the components of the flow $\varphi_\e(\tau,t_0,\theta_0,y_0,z_0)$.
	Then, it is clear by \eqref{systemlemmaautonomized} that $t_\varepsilon(\tau,t_0,\theta_0,y_0) = t_0 +\tau$. Hence, $\alpha(t_1,\theta_1) = \alpha(t_2,\theta_2)$ implies at once that $t_1+t_0=t_2+t_0$, that is, $t_1=t_2$. Therefore, the uniqueness of the flow $\varphi_\varepsilon$ ensures that the points $(0,\theta_1,f_\e(0,\theta_1),g_\e(0,\theta_1))$ and $(0,\theta_2,f_\e(0,\theta_2),g_\e(0,\theta_2))$ must be the same. Thus, $\theta_1=\theta_2$, and $\alpha_\varepsilon$ is indeed injective. 
	
	To show that the image of $\alpha_\varepsilon$ is $M_\varepsilon$, we first observe that, since $(0,\theta,f_\e(0,\theta),g_\e(0,\theta)) \in M_\varepsilon$ for all $\theta \in \mathbb{R}$ and $M_\varepsilon$ is invariant, it follows that $\alpha_\varepsilon(t,\theta) = \varphi_\varepsilon(t,0,\theta,f_\e(0,\theta),g_\e(0,\theta)) \in M_\varepsilon$ for all $(t,\theta) \in \mathbb{R}\times\mathbb{R}$, i.e., the image of $\alpha_\varepsilon$ is contained in $M_\varepsilon$. On the other hand, every point in $M_\varepsilon$ is, by definition, of the form $(t,\theta,f_\e(t,\theta),g_\e(t,\theta))$ for some $(t,\theta) \in \mathbb{R}\times\mathbb{R}$. By properties of the flow, defining $\tilde{\theta}_\varepsilon=\theta_\varepsilon(-t,t,\theta,f_\e(t,\theta),g_\e(t,\theta))$, we have
	\[
	(t,\theta,f_\e(t,\theta),g_\e(t,\theta)) = \varphi_\varepsilon\big(t,0,\tilde{\theta}_\varepsilon,f_\e(0,\tilde{\theta}_\e),g_\e(0,\tilde{\theta}_\e)\big) = \alpha_\varepsilon(t,\tilde{\theta}_\varepsilon),
	\]
	which implies that $M_\varepsilon$ is contained in the image of $\alpha_\varepsilon$. Thus, we have proved that $\alpha_\e$ is an injective function of class $C^p$ whose image is $M_\varepsilon$ and, therefore, is a $C^p$ parametrization of $M_\e$. This ensures that $M_\e$ is of class $C^p$.
		
	We shall now prove that $f_\e$ and $g_\varepsilon$ are of class $C^p$. In order to do so, we remark that $\big(t,\theta,f_\e(t,\theta),g_\varepsilon(t,\theta)) \in M_\e$ for every $(t,\theta)\in\R\times\R$. Then, for each $(t,\theta)\in\R\times\R$, there is $(\tilde{t},\tilde{\theta}) \in \mathbb{R}\times\mathbb{R}$ such that
	\begin{equation} \label{eq:parametrizationq}
		\begin{aligned}
		\big(t,\theta,f_\e(t,\theta),g_\varepsilon(t,\theta))=  \alpha_\varepsilon(\tilde{t},\tilde{\theta}\big) = \big(\tilde{t},\theta_\varepsilon(\tilde{t},0,\tilde{\theta},f_\e(0,\tilde{\theta}), g_\varepsilon(0,\tilde{\theta})),y_\varepsilon(\tilde{t},0,\tilde{\theta},f_\e(0,\tilde{\theta}),g_\varepsilon(0,\tilde{\theta})),\quad\\ z_\varepsilon(\tilde{t},0,\tilde{\theta},f_\e(0,\tilde{\theta}),g_\varepsilon(0,\tilde{\theta}))\big).
		\end{aligned}
	\end{equation}
	Define the function $h(\tilde{t},\tilde{\theta}) = \big(\tilde{t},\theta_\varepsilon(\tilde{t},0,\tilde{\theta},f_\e(0,\tilde{\theta}), g_\varepsilon(0,\tilde{\theta}))\big)$. It is then clear that the inverse of $h$ exists and is given by
	\[
	h^{-1}(t,\theta) = \big(t,\theta_\varepsilon(-t,t,\theta,f_\e(t,\theta),g_\varepsilon(t,\theta))\big).
	\]
	Now, by taking $(\tilde{t}(t,\theta), \tilde{\theta}(t,\theta)) = h^{-1}(t,\theta)$, we get from \eqref{eq:parametrizationq} that
	\[
	\begin{aligned}
		f_\varepsilon(t,\theta)=y_\varepsilon(\tilde{t}(t,\theta),0,\tilde{\theta},f_\e(0,\tilde{\theta}(t,\theta)),g_\varepsilon(0,\tilde{\theta}(t,\theta)), \\
		g_\varepsilon(t,\theta)=z_\varepsilon(\tilde{t}(t,\theta),0,\tilde{\theta},f_\e(0,\tilde{\theta}(t,\theta)),g_\varepsilon(0,\tilde{\theta}(t,\theta)).
	\end{aligned}
	\]
	Thus, since $y_\varepsilon$, $z_\e$, and $\theta \mapsto g_\varepsilon(0,\theta)$ are of class $C^p$, in order to prove that $f_\e$ and $g_\varepsilon$ are of class $C^p$ it only remains to show that $h^{-1}$ is of class $C^p$. First, observe that $h$ is clearly of class $C^p$, because $\tilde{\theta} \mapsto f_\varepsilon(0,\tilde{\theta})$ and $\tilde{\theta} \mapsto g_\varepsilon(0,\tilde{\theta})$ are of class $C^p$ by statement $(e)$ of Lemma \ref{lemmahale}. From the Inverse Function Theorem, it suffices then to prove that the derivative of $h$ is non-singular at every point $(t,\theta) \in \mathbb{R}\times\mathbb{R}$. Observe that
	\[
	Dh(\tilde{t},\tilde{\theta}) = 
	\left[\begin{array}{cc}
		1 & 0 \\
		1+\zeta_0(\tilde{t},\theta_\varepsilon,f_\e(\tilde{t},\theta_\e),g_\varepsilon(\tilde{t},\theta_\varepsilon),\e) & \frac{\partial \theta_\varepsilon}{\partial \theta_0} + \frac{\partial \theta_\varepsilon}{\partial y_0} \cdot \frac{\partial f_\varepsilon}{\partial \theta}(0,\tilde{\theta}) +  \frac{\partial \theta_\varepsilon}{\partial z_0} \cdot \frac{\partial g_\varepsilon}{\partial \theta}(0,\tilde{\theta})
	\end{array}\right],
	\]
	where the argument of $\theta_\varepsilon$ and its partial derivatives is $(\tilde{t},0,\tilde{\theta},f_\e(0,\tilde{\theta}),g_\varepsilon(0,\tilde{\theta}))$ and has been omitted for conciseness. Thus, $Dh(\tilde{t},\tilde{\theta})$ is non-singular if, and only if, 
	\begin{equation} \label{eq:conditioncorollary}
		\begin{aligned}
		&\quad \;\frac{\partial \theta_\varepsilon}{\partial \theta_0}(\tilde{t},0,\tilde{\theta},f_\e(0,\tilde{\theta}),g_\varepsilon(0,\tilde{\theta})) \\&+ \frac{\partial \theta_\varepsilon}{\partial y_0}(\tilde{t},0,\tilde{\theta},f_\e(0,\tilde{\theta}),g_\varepsilon(0,\tilde{\theta})) \cdot \frac{\partial f_\varepsilon}{\partial \theta}(0,\tilde{\theta}) \\&+ \frac{\partial \theta_\varepsilon}{\partial z_0}(\tilde{t},0,\tilde{\theta},f_\e(0,\tilde{\theta}),g_\varepsilon(0,\tilde{\theta})) \cdot \frac{\partial g_\varepsilon}{\partial \theta}(0,\tilde{\theta}) \neq 0.
		\end{aligned}
	\end{equation}
	
	The matrix 
	\[
	 \mathcal{M}(\tau,t_0,\theta_0,y_0,z_0): = \left[\begin{array}{cccc}
		\frac{\partial t_\varepsilon}{\partial t_0}& \frac{\partial t_\varepsilon}{\partial \theta_0} & \frac{\partial t_\varepsilon}{\partial y_0} & \frac{\partial t_\varepsilon}{\partial z_0}\\
		\frac{\partial \theta_\varepsilon}{\partial t_0}& \frac{\partial \theta_\varepsilon}{\partial \theta_0} & \frac{\partial \theta_\varepsilon}{\partial y_0} & \frac{\partial \theta_\varepsilon}{\partial z_0} \\
		\frac{\partial y_\varepsilon}{\partial t_0}& \frac{\partial y_\varepsilon}{\partial \theta_0} & \frac{\partial y_\varepsilon}{\partial y_0} & \frac{\partial y_\varepsilon}{\partial z_0} \\
		\frac{\partial z_\varepsilon}{\partial t_0}& \frac{\partial z_\varepsilon}{\partial \theta_0} & \frac{\partial z_\varepsilon}{\partial y_0} & \frac{\partial z_\varepsilon}{\partial z_0}
	\end{array}\right],
	\]
	where the argument of each entry is given by $(\tau,t_0,\theta_0,y_0,z_0)$ is a fundamental solution of the first variational equation associated to \eqref{systemlemmaautonomized}. Thus, $\mathcal{M}(\tau,t_0,\theta_0,y_0,z_0)$ is invertible. Moreover, since $t_\varepsilon(\tau,t_0,\theta_0,y_0) = \tau+t_0$, it follows that
	\[
	\mathcal{M}(\tau,t_0,\theta_0,y_0,z_0) = \left[\begin{array}{cccc}
		1 &0 &0 &0\\
		\frac{\partial \theta_\varepsilon}{\partial t_0}& \frac{\partial \theta_\varepsilon}{\partial \theta_0} & \frac{\partial \theta_\varepsilon}{\partial y_0} & \frac{\partial \theta_\varepsilon}{\partial z_0} \\
		\frac{\partial y_\varepsilon}{\partial t_0}& \frac{\partial y_\varepsilon}{\partial \theta_0} & \frac{\partial y_\varepsilon}{\partial y_0} & \frac{\partial y_\varepsilon}{\partial z_0} \\
		\frac{\partial z_\varepsilon}{\partial t_0}& \frac{\partial z_\varepsilon}{\partial \theta_0} & \frac{\partial z_\varepsilon}{\partial y_0} & \frac{\partial z_\varepsilon}{\partial z_0}
	\end{array}\right],
	\]
	Hence, we conclude that 
	\[
	\mathcal{N}(\tau,t_0,\theta_0,y_0,z_0):=\left[\begin{array}{ccc}
		\frac{\partial \theta_\varepsilon}{\partial \theta_0}(\tau,t_0,\theta_0,y_0,z_0) & \frac{\partial \theta_\varepsilon}{\partial y_0}(\tau,t_0,\theta_0,y_0,z_0) & \frac{\partial \theta_\varepsilon}{\partial z_0}(\tau,t_0,\theta_0,y_0,z_0)\\
		\frac{\partial y_\varepsilon}{\partial \theta_0}(\tau,t_0,\theta_0,y_0,z_0) & \frac{\partial y_\varepsilon}{\partial y_0}(\tau,t_0,\theta_0,y_0,z_0) & \frac{\partial y_\varepsilon}{\partial z_0}(\tau,t_0,\theta_0,y_0,z_0)\\
		\frac{\partial z_\varepsilon}{\partial \theta_0}(\tau,t_0,\theta_0,y_0,z_0) & \frac{\partial z_\varepsilon}{\partial y_0}(\tau,t_0,\theta_0,y_0,z_0) & \frac{\partial z_\varepsilon}{\partial z_0}(\tau,t_0,\theta_0,y_0,z_0)
	\end{array}\right]
	\]
	is invertible for all $(\tau,t_0,\theta_0,y_0,z_0) \in [-\Omega,\Omega] \times \mathbb{R}\times\mathbb{R} \times B_m(0,\rho) \times B_m(0,\rho)$, where $[-\Omega,\Omega]$ is the maximal interval where the flow is defined. In particular, if $t_0=0$, $\theta_0=\tilde{\theta}$, $y_0 =f_\e(0,\tilde{\theta})$, and $z_0=g_\varepsilon(0,\tilde{\theta})$, then the flow is defined for all $\tau \in \mathbb{R}$, and it follows that $\mathcal{N}(\tilde{t},0,\tilde{\theta},f_\e(0,\tilde{\theta}),g_\e(0,\tilde{\theta}))$
	is invertible for all $(\tilde{t},\tilde{\theta}) \in \mathbb{R}\times\mathbb{R}$. Thus, the product
	\[	\begin{aligned}
		\mathcal{N}(\tilde{t},0,\tilde{\theta},f_\e(0,\tilde{\theta}),g_\e(0,\tilde{\theta})) \cdot
		\left[\begin{array}{c}
			1 \\
			\frac{\partial f_\varepsilon}{\partial \theta}(0,\tilde{\theta}) \\
			\frac{\partial g_\varepsilon}{\partial \theta}(0,\tilde{\theta})
		\end{array}\right]	
		= 
		\left[\begin{array}{c}
			\frac{\partial \theta_\varepsilon}{\partial \theta_0} + \frac{\partial \theta_\varepsilon}{\partial y_0} \cdot \frac{\partial f_\varepsilon}{\partial \theta} +\frac{\partial \theta_\varepsilon}{\partial z_0}\cdot \frac{\partial g_\varepsilon}{\partial \theta}\\
			\frac{\partial y_\varepsilon}{\partial \theta_0} + \frac{\partial y_\varepsilon}{\partial y_0} \cdot \frac{\partial f_\varepsilon}{\partial \theta} +\frac{\partial y_\varepsilon}{\partial z_0}\cdot \frac{\partial g_\varepsilon}{\partial \theta} \\
			\frac{\partial z_\varepsilon}{\partial \theta_0} + \frac{\partial z_\varepsilon}{\partial y_0} \cdot \frac{\partial f_\varepsilon}{\partial \theta} +\frac{\partial z_\varepsilon}{\partial z_0}\cdot \frac{\partial g_\varepsilon}{\partial \theta}
		\end{array}\right]
	\end{aligned} \]
	cannot vanish, where the arguments of the derivatives of $\theta_\e$, $y_\e$ and $z_\e$, as well as the arguments of the derivatives of $f_\e$ and $g_\e$, have been omitted, but should read respectively as $(\tilde{t},0,\tilde{\theta},f_\e(0,\tilde{\theta}),g_\e(0,\tilde{\theta}))$ and $(0,\tilde{\theta})$. Suppose, by contradiction, that \eqref{eq:conditioncorollary} does not hold at $(\tilde{t}_*,\tilde{\theta}_*) \in \R^2$, so that the first line of product above vanishes. Let us show that this implies that the product vanishes altogether.
	
	In fact, observe that the invariance of $M_\varepsilon$ ensures that, for all $(\tilde{t},\tilde{\theta}) \in \R^2$,
	\[
	\begin{aligned}
	y_\varepsilon(\tilde{t},0,\tilde{\theta},f_\e(0,\tilde{\theta}),g_\varepsilon(0,\tilde{\theta})) = f_\varepsilon(\tilde{t},\theta_\varepsilon(\tilde{t},0,\tilde{\theta},f_\e(0,\tilde{\theta}),g_\varepsilon(0,\tilde{\theta}))), \\
	z_\varepsilon(\tilde{t},0,\tilde{\theta},f_\e(0,\tilde{\theta}),g_\varepsilon(0,\tilde{\theta})) = g_\varepsilon(\tilde{t},\theta_\varepsilon(\tilde{t},0,\tilde{\theta},f_\e(0,\tilde{\theta}),g_\varepsilon(0,\tilde{\theta}))).
	\end{aligned}
	\]
	Thus, differentiating with respect to $\tilde{\theta}$, it follows that
	\begin{equation*}
		\begin{aligned}
			\frac{\partial y_\varepsilon}{\partial \theta_0} + \frac{\partial y_\varepsilon}{\partial y_0} \cdot \frac{\partial f_\varepsilon}{\partial \theta} +\frac{\partial y_\e}{\partial z_0} \cdot \frac{\partial g_\e}{\partial \theta} =  \frac{\partial f_\varepsilon}{\partial \theta}  \left[	\frac{\partial \theta_\varepsilon}{\partial \theta_0} + \frac{\partial \theta_\varepsilon}{\partial y_0} \cdot \frac{\partial f_\varepsilon}{\partial \theta} +\frac{\partial \theta_\e}{\partial z_0} \cdot \frac{\partial g_\e}{\partial \theta}\right]
		\end{aligned}
	\end{equation*}
	and 
	\begin{equation*}
		\begin{aligned}
			\frac{\partial z_\varepsilon}{\partial \theta_0} + \frac{\partial z_\varepsilon}{\partial y_0} \cdot \frac{\partial f_\varepsilon}{\partial \theta} +\frac{\partial z_\e}{\partial z_0} \cdot \frac{\partial g_\e}{\partial \theta} =  \frac{\partial g_\varepsilon}{\partial \theta}  \left[	\frac{\partial \theta_\varepsilon}{\partial \theta_0} + \frac{\partial \theta_\varepsilon}{\partial y_0} \cdot \frac{\partial f_\varepsilon}{\partial \theta} +\frac{\partial \theta_\e}{\partial z_0} \cdot \frac{\partial g_\e}{\partial \theta}\right]
		\end{aligned}
	\end{equation*}
	where, once again, the arguments of the derivatives of $\theta_\e$, $y_\e$ and $z_\e$, as well as the arguments of the derivatives of $f_\e$ and $g_\e$, have been omitted, but should read respectively as $(\tilde{t},0,\tilde{\theta},f_\e(0,\tilde{\theta}),g_\e(0,\tilde{\theta}))$ and $(0,\tilde{\theta})$.
	
	Since we assumed that \eqref{eq:conditioncorollary} does not hold at $(\tilde{t}_*,\tilde{\theta}_*)$, it follows at once that
	\[
	\left[\begin{array}{c}
		\frac{\partial \theta_\varepsilon}{\partial \theta_0} + \frac{\partial \theta_\varepsilon}{\partial y_0} \cdot \frac{\partial f_\varepsilon}{\partial \theta} +\frac{\partial \theta_\varepsilon}{\partial z_0}\cdot \frac{\partial g_\varepsilon}{\partial \theta}\\
		\frac{\partial y_\varepsilon}{\partial \theta_0} + \frac{\partial y_\varepsilon}{\partial y_0} \cdot \frac{\partial f_\varepsilon}{\partial \theta} +\frac{\partial y_\varepsilon}{\partial z_0}\cdot \frac{\partial g_\varepsilon}{\partial \theta} \\
		\frac{\partial z_\varepsilon}{\partial \theta_0} + \frac{\partial z_\varepsilon}{\partial y_0} \cdot \frac{\partial f_\varepsilon}{\partial \theta} +\frac{\partial z_\varepsilon}{\partial z_0}\cdot \frac{\partial g_\varepsilon}{\partial \theta}
	\end{array}\right] = 0
	\]
	when the arguments of the derivatives of $\theta_\e$, $y_\e$ and $z_\e$ are given by $(\tilde{t}_*,0,\tilde{\theta}_*,f_\e(0,\tilde{\theta}_*),g_\e(0,\tilde{\theta}_*))$ and the arguments of the derivatives of $f_\e$ and $g_\e$ are given by $(0,\tilde{\theta}_*)$. Therefore,
 	\[
 	\mathcal{N}(\tilde{t}_*,0,\tilde{\theta}_*,f_\e(0,\tilde{\theta}_*),g_\e(0,\tilde{\theta}_*)) \cdot
 	\left[\begin{array}{c}
 		1 \\
 		\frac{\partial f_\varepsilon}{\partial \theta}(0,\tilde{\theta}_*) \\
 		\frac{\partial g_\varepsilon}{\partial \theta}(0,\tilde{\theta}_*)
 	\end{array}\right] = 0 .
 	\]
 	As remarked above, this would imply that $\mathcal{N}(\tilde{t}_*,0,\tilde{\theta}_*,f_\e(0,\tilde{\theta}_*),g_\e(0,\tilde{\theta}_*))$, which we have proved to be invertible, is not invertible. Since we have reached a contradiction, it is proved that $h^{-1}$ is indeed of class $C^p$, which concludes the proof of the Proposition.

\subsection{Proof of Proposition \ref{corollaryregularity2}}

It is not difficult to see that we can assume without loss of generality that the function $L$ appearing in hypothesis (ii) of the Lemma satisfies: $L(\e,\sigma,\mu)\geq \e$ for all $(\e,\sigma,\mu) \in (0,\e_0] \times [0,\rho_1) \times [0,\rho_2)$. 
	
	Let $D(\e)$, $\Delta(\e)$, $J_1$, $J_2$ be given as in the proof of Lemma \ref{lemmahale}. It is clear that there are $K>0$ and $\alpha>0$ such that $\|J_i(t)\| \leq K e^{-\alpha|t|}$ for all $i \in \{1,2\}.$ By choosing $\e_1$ to be sufficiently small, we can then ensure that the following inequalities hold for all $\e \in (0,\e_1]$:
	\begin{itemize}
		\item $\Delta(\e)<\frac{1}{2}$ and $D(\e) < \rho:=\min(\rho_1,\rho_2)$;
		\item $32L(\e,D(\e),D(\e)) <\alpha;$
		\item $64 K L(\e,D(\e),D(\e))<\alpha$.
	\end{itemize}
	
	Let $\mathcal{P}_\omega(D,\Delta)$, $\mathcal{A}_\omega(D,\Delta)$, and the operator $S^\e$ be given as in the proof of Lemma \ref{lemmahale}. For each $\e \in (0,\e_1]$, define the sequence $(P_k, A_k)_{k \in \mathbb{N}}$, where
	$ P_k: (0,\e_1) \to \mathcal{P}_\omega(D,\Delta)$ and $ A_k: (0,\e_1) \to \mathcal{A}_\omega(D,\Delta)$ are functions of class $C^1$ given by:
	\begin{itemize}
		\item $(P_0(\e),A_0(\e)) = (0,0)$ for all $\e \in (0,\e_1)$;
		\item $(P_{k+1}(\e),A_{k+1}(\e)) = S^\e(P_k(\e),A_k(\e))$ for all $k \in \mathbb{N}$ and all $\e \in (0,\e_1)$.
	\end{itemize}
	From Lemma \ref{lemmahale}, it is clear that this sequence satisfies
	\begin{equation} \label{eq:pointwiselimit}
		\lim_{k \to \infty} (P_k(\e),A_k(\e)) = (f_\e,g_\e).
	\end{equation}
	
		Let $a,b \in (0,\e_1)$ be fixed. Effecting cumbersome calculations, which are very similar to those presented in the proofs of Propositions \ref{propositiondeltatheta} and \ref{corollaryregularity3}, and for this reason are omitted, we can show that the following hold for all $\e \in [a,b]$ and all $k \in \mathbb{N}$:
	\begin{enumerate}
		\item $\|P_{k+2}(\e) - P_{k+1}(\e) \|  + \|A_{k+2}(\e) - A_{k+1}(\e) \| \leq \frac{1}{4} \left[\|P_{k+1}(\e) - P_k(\e) \|  + \|A_{k+1}(\e) - A_k(\e) \|\right].$
		\item There is $C_1>0$ such that 
		\[
		\begin{aligned}
			\left\|\frac{\partial P_{k+2}(\e)}{\partial \theta} -\frac{\partial P_{k+1}(\e)}{\partial \theta}\right\| + \left\|\frac{\partial A_{k+2}(\e)}{\partial \theta} -\frac{\partial A_{k+1}(\e)}{\partial \theta}\right\| \leq   C_1 \left[\|P_{k+1}(\e) - P_k(\e) \|  + \|A_{k+1}(\e) -A_k(\e) \|\right] \\+\frac{1}{4}\left[	\left\|\frac{\partial P_{k+1}(\e)}{\partial \theta} -\frac{\partial P_{k}(\e)}{\partial \theta}\right\| + \left\|\frac{\partial A_{k+1}(\e)}{\partial \theta} -\frac{\partial A_{k}(\e)}{\partial \theta}\right\|\right].
		\end{aligned}
		\]
		\item There is $C_2>0$ such that 
		\[
		\begin{aligned}
			\left\| P'_{k+2}(\e) -P'_{k+1}(\e)\right\| + \left\| A'_{k+2}(\e) -A'_{k+1}(\e)\right\| \leq   C_2 \Bigg[\|P_{k+1}(\e) - P_k(\e) \|  + \|A_{k+1}(\e) - A_k(\e) \| \\+\left. \left\|\frac{\partial P_{k+1}(\e)}{\partial \theta} -\frac{\partial P_{k}(\e)}{\partial \theta}\right\| + \left\|\frac{\partial A_{k+1}(\e)}{\partial \theta} -\frac{\partial A_{k}(\e)}{\partial \theta}\right\|\right] \\
			+ \frac{1}{4}\left[	\left\| P'_{k+1}(\e) -P'_{k}(\e)\right\| + \left\| A'_{k+1}(\e) -A'_{k}(\e)\right\|\right].
		\end{aligned}
		\]
	\end{enumerate}
	
	From those inequalities, it follows easily that there is $C>0$ such that
	
	\[
			\sup_{\e \in [a,b]}\left\| P'_{k+1}(\e) -P'_{k}(\e)\right\| + \left\| A'_{k+1}(\e) -A'_{k}(\e)\right\| \leq \frac{C}{2^k}
	\]
	for all $k \in \mathbb{N}$. Hence, the sequence $(P'_k\,,A'_k)_{k \in \mathbb{N}}$ converges uniformly on $[a,b]$. Since $a$ and $b$ were arbitrary, this implies that $(P'_k\,,A'_k)_{k \in \mathbb{N}}$ converges uniformly on compact subsets of $(0,\e_1)$. Therefore, considering \eqref{eq:pointwiselimit}, it follows that the function $c$ given in the statement of this Lemma is of class $C^1$ (see, for instance, \cite[Theorem 85, Chapter 1]{banachbook}), concluding the proof.
	
\subsection{Proof of Proposition \ref{propositiondeltatheta}}

Let $D(\e)$, $\Delta(\e)$, $J_1$, $J_2$, and $T_{F,G}$ be given as in the proof of Lemma \ref{lemmahale}. It is clear that there are $K>0$ and $\alpha>0$ such that $\|J_i(t)\| \leq K e^{-\alpha|t|}$ for all $i \in \{1,2\}.$ For convenience, we will denote $L(\e,D(\e),D(\e))$ by $L(\e)$ throughout the proof. As in Proposition \ref{corollaryregularity2}, we assume that the function $L$ appearing in hypothesis (ii) of the Lemma satisfies: $L(\e,\sigma,\mu)\geq \e$ for all $(\e,\sigma,\mu) \in (0,\e_0] \times [0,\rho_1) \times [0,\rho_2)$. 
 	
 	Let $(P_k,A_k)_{k \in \mathbb{N}}$ be the sequence defined in Proposition \ref{corollaryregularity2}. For convenience, define
	\begin{itemize}
		\item $T_{k}^{x,t,\e}(\theta): = T_{P_k(\e),A_k(\e)}(t+x,t,\theta,\e)$;
		\item $\Lambda_k^{x,t,\e}(\theta)=(t+x,\theta,P_k(\e)(t+x,\theta),A_k(\e)(t+x,\theta),\e)$;
		\item$\zeta_{i,k}^{x,t,\e}(\theta):=\zeta_i(t+x,\theta,P_k(\e)(t+x,\theta),A_k(\e)(t+x,\theta),\e) = \zeta_i \circ \Lambda_k^{x,t,\e}(\theta)$.
	\end{itemize}
	We will prove by induction on $q$ that, if $\e_1$ is sufficiently small, the following hold for each $q \in \{1,\ldots, p+1\}$:
	\begin{enumerate}[label=P\arabic*.)]
		\item \label{property0 1} There is $N_q \in \mathbb{N}$ and, for each $[a,b] \subset (0,\e_1]$, there is $C_{0,q}>0$ such that
		\[
		\left| \left(T_k^{x,t,\e}\right)^{(q)}(\theta) \right| \leq C_{0,q}\, e^{N_{q} L(\e)(1+2\Delta(\e)) |x|},
		\]
		for all $k \in \mathbb{N}$ and all $(x,t,\theta,\e) \in \R \times \R \times \R \times [a,b]$.
		\item \label{property0 2} For each $[a,b] \subset (0,\e_1]$, there is $C_{1,q}>0$ such that 
		\[
		\left\|\frac{\partial^q P_k(\e)}{\partial \theta^q} \right\| + \left\|\frac{\partial^q A_k(\e)}{\partial \theta^q} \right\| \leq C_{1,q},
		\]
		for all $k \in \mathbb{N}$ and all $\e \in [a,b]$.
	\end{enumerate}
	
	Let us then consider the case $q=1$. Observe that, from the definition of $T_k^{x,t,\e}$, it follows that
	\begin{equation}\label{eq:partialtheta0partialtTk}
		\begin{aligned}
			\frac{\partial }{\partial \theta} \left(\frac{\partial T_{P_k(\e),A_k(\e)}}{\partial x}\right) (t+x,t,\theta,\e) = D\zeta_0 \left(\Lambda_k^{x,t,\e}\left(T_k^{x,t,\e}(\theta)\right)\right) \cdot \left(\Lambda_k^{x,t,\e}\right)' \left(T_k^{x,t,\e}(\theta)\right) \cdot \left(  T_k^{x,t,\e}\right)'(\theta).
		\end{aligned}
	\end{equation}
	Hence, we obtain by changing order of derivatives and integrating
	\[
	\left| \left( T_k^{x,t,\e}\right)'(\theta) - \left(T_k^{t,t,\e}\right)(\theta) \right| \leq \int_{0}^x L(\e) (1+2\Delta(\e)) \left|\left(T_k^{x,t,\e}\right)'(\theta) \right| dx.
	\]
	Since 
	\[
	\left(T_k^{t,t,\e}\right)'(\theta) =1,
	\]
	it follows by an application of Grönwall's inequality that
	\begin{equation}\label{ineq:boundednesspartialTkpartialtheta0}
		\left|\left(T_k^{x,t,\e}\right)'(\theta)  \right|  \leq e^{L(\e)(1+2\Delta(\e))|x|}.
	\end{equation}
	This proves property \ref{property0 1}. Property \ref{property0 2} follows directly, with $C_{1,1} = \sup_{\e \in [a,b]} \Delta(\e)$, from the fact that $(P_k,A_k) \in \mathcal{P}_\omega(D(\e),\Delta(\e)) \times \mathcal{A}_\omega(D(\e),\Delta(\e))$ for all $k \in \mathbb{N}$.

	Let $N \in \{2,\ldots,p+1\}$ be given and suppose that the Lemma is true for every $q \in \mathbb{N}$ such that $1\leq q \leq N-1$. We will show that the Lemma also holds for $q=N$.
	
	Henceforth, we will employ the index $i$ to denote any element of the set $\{0,1,2\}$, since the considerations done below are the same. By Faà di Bruno's formula, since $\zeta_{i,k}^{\e,x,t} =\zeta_i \circ \Lambda_k^{x,t,\e}$, it follows that
	\[
	\begin{aligned}
		\left(\zeta_{i,k}^{x,t,\e}\right)^{(q)} (\theta) = \sum_{j=1}^{q} D^{(j)} \zeta_i(\Lambda_k^{x,t,\e}(\theta)) \cdot B_{q,j} \left(\left(\Lambda_k^{x,t,\e}\right)'(\theta),\ldots, \left(\Lambda_k^{x,t,\e}\right)^{(q-j+1)}(\theta)\right), 
	\end{aligned}
	\]
	for each $q \in \{1,\ldots,p+1\}$, where $B_{q,j}$ is a Bell polynomial. Observe that $D^{(j)} \zeta_i(\Lambda_k^{x,t,\e}(\theta))$ is a symmetric multilinear map that can be thought of as being applied to a ``product" of vectors. Its application to a polynomial is simply a linear combination of different applications to such ``products". In particular, for $q=N$, we can write
	\[
	\begin{aligned}
		\left(\zeta_{i,k}^{x,t,\e}\right)^{(N)} (\theta) &= D^{(N)} \zeta_i(\Lambda_k^{x,t,\e}(\theta)) \cdot \left(\left(\Lambda_k^{x,t,\e}\right)'(\theta)\right)^{N} \\
		&+D\zeta_i(\Lambda_k^{x,t,\e}(\theta)) \cdot\left(\left(\Lambda_k^{x,t,\e}\right)^{(N)}(\theta)\right) \\
		&+ \sum_{j=2}^{N-1} D^{(j)} \zeta_i(\Lambda_k^{x,t,\e}(\theta)) \cdot B_{N,j} \left(\left(\Lambda_k^{x,t,\e}\right)'(\theta),\ldots, \left(\Lambda_k^{x,t,\e}\right)^{(N-j+1)}(\theta)\right).
	\end{aligned}
	\]
	
	By the same formula, we also have:
	\[
	\begin{aligned}
	 \left(\zeta_{i,k}^{x,t,\e} \left(T_k^{x,t,\e} (\theta)\right)\right)^{(N)} &= \left(\zeta_{i,k}^{x,t,\e}\right)^{(N)} \left(T_k^{x,t,\e}(\theta)\right)	\cdot \left(\left(T^{x,t,\e}_k\right)'(\theta)\right)^{N} \\
		&+ \left(\zeta_{i,k}^{x,t,\e}\right)'\left(T_k^{x,t,\e}(\theta)\right) \cdot  \left(\left(T_k^{x,t,\e}\right)^{(N)}(\theta)\right) \\
		&+ \sum_{j=2}^{N-1} \left(\zeta_{i,k}^{x,t,\e}\right)^{(j)} \left(T_k^{x,t,\e}(\theta)\right) \cdot B_{N,j}\left(\left(T_k^{x,t,\e}\right)'(\theta),\ldots,\left(T_k^{x,t,\e}\right)^{(N-j+1)}(\theta)\right).
	\end{aligned}
	\]
	Thus, it follows that 
	\begin{equation}
		\begin{aligned}
			& \left(\zeta_{i,k}^{x,t,\e} \left(T_k^{x,t,\e} (\theta)\right)\right)^{(N)}\\&= D^{(N)} \zeta_i\left(\Lambda_k^{x,t,\e}\left(T_k^{x,t,\e}(\theta)\right)\right) \cdot \left(\left(\Lambda_k^{x,t,\e}\right)'\left(T_k^{x,t,\e}(\theta)\right)\right)^{N} \cdot \left(\left(T^{x,t,\e}_k\right)'(\theta)\right)^{N} \\
			&+D\zeta_i\left(\Lambda_k^{x,t,\e}\left(T_k^{x,t,\e}(\theta)\right)\right) \cdot\left(\left(\Lambda_k^{x,t,\e}\right)^{(N)}\left(T_k^{x,t,\e}(\theta)\right)\right) \cdot \left(\left(T^{x,t,\e}_k\right)'(\theta)\right)^{N} \\
			&+ \sum_{j=2}^{N-1} D^{(j)} \zeta_i\left(\Lambda_k^{x,t,\e}\left(T_k^{x,t,\e}(\theta)\right)\right) \cdot B_{N,j} \left(\left(\Lambda_k^{x,t,\e}\right)'\left(T_k^{x,t,\e}(\theta)\right),\ldots, \left(\Lambda_k^{x,t,\e}\right)^{(N-j+1)}\left(T_k^{x,t,\e}(\theta)\right)\right)  \left(\left(T^{x,t,\e}_k\right)'(\theta)\right)^{N} \\
			&+ D\zeta_i\left(\Lambda_k^{x,t,\e}\left(T_k^{x,t,\e}(\theta)\right)\right) \cdot\left(\left(\Lambda_k^{x,t,\e}\right)'\left(T_k^{x,t,\e}(\theta)\right)\right)  \cdot  \left(\left(T_k^{x,t,\e}\right)^{(N)}(\theta)\right) \\
			&+ \sum_{j=2}^{N-1} \sum_{l=1}^j \left[ D^{(l)}\zeta_i\left(\Lambda_k^{x,t,\e}\left(T_k^{x,t,\e}(\theta)\right)\right) \cdot  B_{j,l} \left(\left(\Lambda_k^{x,t,\e}\right)'\left(T_k^{x,t,\e}(\theta)\right),\ldots, \left(\Lambda_k^{x,t,\e}\right)^{(j-l+1)}\left(T_k^{x,t,\e}(\theta)\right)\right)\right] \\  & \qquad \qquad   B_{N,j}\left(\left(T_k^{x,t,\e}\right)'(\theta),\ldots,\left(T_k^{x,t,\e}\right)^{(N-j+1)}(\theta)\right).
		\end{aligned}
	\end{equation}
	For simplicity, we will denote the summands on the right-hand side of this equation by $I$, $II$, $III$, $IV$, and $V$, respectively.
	
	By definition of $T_k^{x,t,\e} (\theta)$, it follows that
	\begin{equation} \label{eq:derivativeofTk}
	\begin{aligned}
		\frac{\partial^{N}}{\partial \theta^{N}}  \left(\frac{\partial T_{P_k(\e),A_k(\e)}}{\partial x}\right) (t+x,t,\theta,\e) =  \left(\zeta_{0,k}^{x,t,\e}\left(T_k^{x,t,\e}(\theta)\right)\right)^{(N)}.
	\end{aligned}
	\end{equation}
	Observe that, for $q \in \{1,\ldots,p+1\}$,
	\[
	\left(\Lambda_k^{x,t,\e}\right)^{(q)}(\theta) = \left(0,\delta_{1q},\frac{\partial^q P_k(\e)}{\partial \theta^q}(t+x,\theta),\frac{\partial^q A_k(\e)}{\partial \theta^q}(t+x,\theta),0\right),
	\]
	where $\delta_{ij}$ is the Kronecker delta. Thus, since $N\geq2$, it follows that
	\begin{equation*}
		\left\|D\zeta_i\left(\Lambda_k^{x,t,\e}\left(T_k^{x,t,\e}(\theta)\right)\right) \cdot\left(\left(\Lambda_k^{x,t,\e}\right)^{(N)}\left(T_k^{x,t,\e}(\theta)\right)\right)\right\| \leq L(\e) \left[\left\| \frac{\partial^{N} P_k(\e)}{\partial \theta^{N}} \right\| + \left\|\frac{\partial^{N} A_k(\e)}{\partial \theta^{N}} \right\|\right],
	\end{equation*}
	which, combined with \eqref{ineq:boundednesspartialTkpartialtheta0}, ensures that
	\begin{equation} \label{ineq:II0}
		\|II\| \leq L(\e) \left[\left\| \frac{\partial^{N} P_k(\e)}{\partial \theta^{N}} \right\| + \left\|\frac{\partial^{N} A_k(\e)}{\partial \theta^{N}} \right\|\right] e^{NL(\e)(1+2\Delta(\e))|x|}.
	\end{equation}
	Moreover, we also have
	\begin{equation*}
		\left\|D\zeta_i\left(\Lambda_k^{x,t,\e}\left(T_k^{x,t,\e}(\theta)\right)\right) \cdot\left(\left(\Lambda_k^{x,t,\e}\right)'\left(T_k^{x,t,\e}(\theta)\right)\right)\right\| \leq L(\e) \left[1 + \left\| \frac{\partial P_k(\e)}{\partial \theta} \right\| + \left\|\frac{\partial A_k(\e)}{\partial \theta} \right\|\right],
	\end{equation*}
	so that
	\begin{equation} \label{ineq:IV0}
		\|IV\| \leq L(\e) (1+2\Delta(\e)) \left(T_k^{x,t,\e}\right)^{(N)}(\theta).
	\end{equation}
	
	Observe that, since $\zeta_i$ is of class $C^{p+1}$ and periodic in its first two entries, there is $C_\zeta>0$ such that
	\[
		\left\|D^{(q)} \zeta_i\left(\Lambda_k^{x,t,\e}\left(T_k^{x,t,\e}(\theta)\right)\right)\right\| \leq C_\zeta
	\]
	for all $q \in \{1,\ldots,p+1\}$, all $k \in \mathbb{N}$, and all $(x,t,\theta,\e) \in \R \times \R \times \R \times [a,b]$. Thus, considering the hypothesis of induction, it follows that there are $\tilde{C}>0$ and $\tilde{N} \in \mathbb{N}$, where $\tilde{C}$ depends on the choice of the interval $[a,b]$ but $\tilde{N}$ does not, such that
	\begin{equation}\label{ineq:I+III+V0}
	\|I\|+\|III\|+\|V\| \leq \tilde{C} e^{\tilde{N}L(\e)(1+2\Delta(\e))|x|}.
	\end{equation}
	
	Therefore, considering \eqref{ineq:II0}, \eqref{ineq:IV0}, and \eqref{ineq:I+III+V0}, it follows by changing the order of derivatives of \eqref{eq:derivativeofTk} and integrating with respect to $x$ that
	\begin{equation*}
		\begin{aligned}
			\left| \left(T_k^{x,t,\e}\right)^{(N)}(\theta)\right| \leq& \int_0^x L(\e) (1+2\Delta(\e)) \left| \left(T_k^{\tau,t,\e}\right)^{(N)}(\theta)\right| d\tau  + \frac{\tilde{C} \; e^{\tilde{N}L(\e)(1+2\Delta(\e))|x|}}{\tilde{N}L(\e)(1+2\Delta(\e))} 
			\\ &+ \frac{1}{N(1+2\Delta(\e))} \left[\left\| \frac{\partial^{N} P_k(\e)}{\partial \theta^{N}} \right\| + \left\|\frac{\partial^{N} A_k(\e)}{\partial \theta^{N}} \right\|\right] e^{NL(\e)(1+2\Delta(\e))|x|}.
		\end{aligned}
	\end{equation*}
	Thus, by taking $N_q := \max(N+1,\tilde{N}+1)$, an application of Grönwall's inequality ensures that
	\begin{equation} \label{ineq:partialNpartialthetaNTk}
		\begin{aligned}
			\left|\left(T_k^{x,t,\e}\right)^{(N)}(\theta)\right| \leq& \left(\frac{\tilde{C} \; }{\tilde{N}L(\e)} +  \frac{1}{1+2\Delta(\e)} \left[\left\| \frac{\partial^{N} P_k(\e)}{\partial \theta^{N}} \right\| + \left\|\frac{\partial^{N} A_k(\e)}{\partial \theta^{N}} \right\|\right]\right) 
			 e^{N_q L(\e)(1+2\Delta(\e))|x|}.
		\end{aligned}
	\end{equation}

	Having proved \eqref{ineq:partialNpartialthetaNTk}, we proceed to showing that \ref{property0 2} holds for $q=N$. This will be done by induction on $k \in \mathbb{N}$. Define 
	\[
		C_{1,N} : =\frac{48K \tilde{C}}{\alpha}.	
	\] 
	Since $(P_0,A_0)=(0,0)$, property \ref{property0 2} is trivially true with this constant for $q=N$ and $k=0$. Suppose it holds for all non-negative integers up to a given $k \in \mathbb{N}$. Let us show that is must also hold for $k+1$. Observe that
	\begin{equation*}
		\begin{aligned}
			\frac{\partial^N P_{k+1}(\e)}{\partial \theta^N} (t,\theta) = \int_{-\infty}^{\infty} J_1(x) \left(\zeta_{1,k}^{x,t,\e} \left(T_k^{x,t,\e} (\theta)\right)\right)^{(N)} dx.
		\end{aligned}
	\end{equation*}
	Hence, considering \eqref{ineq:II0}, \eqref{ineq:IV0}, \eqref{ineq:I+III+V0}, and \eqref{ineq:partialNpartialthetaNTk}, it follows that
	\begin{equation*}
		\begin{aligned}
				\left \| \frac{\partial^N P_{k+1}(\e)}{\partial \theta^N}\right \| \leq  &\int_{-\infty}^\infty K e^{-\alpha|x|} \left(\frac{\tilde{C}(1+2\Delta(\e))}{\tilde{N}} + \tilde{C}\right) e^{N_q L(\e) (1+2\Delta(\e))|x|} dx \\
				&+ \int_{-\infty}^\infty K e^{-\alpha|x|} 2L(\e)  \left[\left\| \frac{\partial^{N} P_k(\e)}{\partial \theta^{N}} \right\| + \left\|\frac{\partial^{N} A_k(\e)}{\partial \theta^{N}} \right\|\right]\, e^{N_q L(\e) (1+2\Delta(\e))|x|} dx.
		\end{aligned}
	\end{equation*}
	
	If $\e_1$ is sufficiently small as to ensure that $2N_qL(\e)(1+2\Delta(\e)) \leq \alpha$ for all $\e \in (0,\e_1]$, and considering the hypothesis of induction, it follows that 
	\begin{equation}
		\begin{aligned}
			\left \| \frac{\partial^N P_{k+1}(\e)}{\partial \theta^N}\right \| \leq  \frac{4K}{\alpha} \left(\frac{\tilde{C}(1+2\Delta(\e))}{\tilde{N}} + \tilde{C}\right)  + \frac{8KL(\e)}{\alpha} C_{1,N}.
		\end{aligned}
	\end{equation}
	We proceed identically for $A_{k+1}$ and obtain
	\begin{equation}
		\begin{aligned}
			\left \| \frac{\partial^N A_{k+1}(\e)}{\partial \theta^N}\right \| \leq  \frac{4K}{\alpha} \left(\frac{\tilde{C}(1+2\Delta(\e))}{\tilde{N}} + \tilde{C}\right)  + \frac{8KL(\e)}{\alpha}  C_{1,N}.
		\end{aligned}
	\end{equation}
	If $\e_1$ is also chosen sufficiently small as to ensure that $32KL(\e)<\alpha$ and $2\Delta(\e)<1$ for all $\e \in (0,\e_1]$, then
	\[
			\left \| \frac{\partial^N P_{k+1}(\e)}{\partial \theta^N}\right \| + \left \| \frac{\partial^N A_{k+1}(\e)}{\partial \theta^N}\right \| \leq \frac{24K\tilde{C}}{\alpha} + \frac{C_{1,N}}{2} \leq C_{1,N},
	\]
	proving property \ref{property0 2}. 
	
	Observe that the validity of property \ref{property0 1} for $q=N$ follows immediately from \eqref{ineq:partialNpartialthetaNTk} and the fact that \ref{property0 2} holds for $q=N$. Therefore, by induction on $q$, it is proved that both properties hold for all $q \in \{1,\ldots,p+1\}$. The Lemma then follows by defining $$N_T:= \max_{q \in \{1,\ldots,p+1\}}N_q,$$and, for each interval $[a,b] \subset (0,\e_1]$, the positive constants $$C_{[a,b]}: = \max_{q \in \{1,\ldots,p+1\}}C_{1,q}, \qquad M_{[a,b]}:= \max_{q \in \{1,\ldots,p+1\}}C_{0,q}, \qquad $$
	and observing that $(f_\e,g_\e)$ is the limit of the sequence $(P_k(\e),A_k(\e))_{k \in \mathbb{N}}$.
	
\subsection{Proof of Proposition \ref{corollaryregularity3}}
	Let $D(\e)$, $\Delta(\e)$, $J_1$, $J_2$, and $T_{F,G}$ be given as in the proof of Lemma \ref{lemmahale}. It is clear that there are $K>0$ and $\alpha>0$ such that $\|J_i(t)\| \leq K e^{-\alpha|t|}$ for all $i \in \{1,2\}.$ Once again, we will denote $L(\e,D(\e),D(\e))$ by $L(\e)$ throughout the proof. As in Proposition \ref{corollaryregularity2}, we assume that the function $L$ appearing in hypothesis (ii) of the Lemma satisfies: $L(\e,\sigma,\mu)\geq \e$ for all $(\e,\sigma,\mu) \in (0,\e_0] \times [0,\rho_1) \times [0,\rho_2)$. 
	
	 For convenience, define
	\begin{itemize}
		\item $T^{x,t,\e}(\theta): = T_{f_\e,g_\e} (t+x,t,\theta)$;
		\item $\Lambda^{x,t,\e}(\theta)=(t+x,\theta,f_\e(t+x,\theta),g_\e(t+x,\theta),\e)$;
		\item$\zeta_i^{x,t,\e}(\theta):=\zeta_i(t+x,\theta,f_\e(t+x,\theta),g_\e(t+x,\theta),\e) = \zeta_i \circ \Lambda^{x,t,\e}(\theta)$.
	\end{itemize}

	First, let us consider $k=0$. Let us restrict the possible values of the parameter $\e$ to a compact interval $[a,b] \subset (0,\e_1]$, and let $\rho>0$ be such that $D(\e)< \rho$ for all $\e \in (0,\e_1]$. In this case, the functions $\zeta_0$, $\zeta_1$, $\zeta_2$ are Lipschitz continuous with Lipschitz constant $R$ over $\R \times \R \times \bar{B}_m(0,\rho) \times \bar{B}_n(0,\rho) \times [a,b]$. Thus, it is clear that, if $\e,\tilde{\e} \in [a,b] \subset (0,\e_1]$, then 
	\[
	\begin{aligned}
		| T^{x,t,\e}(\theta) - T^{x,t,\tilde{\e}}(\theta)| \leq& \int_{0}^x L(\e)(1+2\Delta(\e))| T^{\tau,t,\e}(\theta) - T^{\tau,t,\tilde{\e}}(\theta)| d\tau \\
		&+ \int_{0}^x L(\e) \left[\|f_\e-f_{\tilde{\e}}\| + \|g_\e-g_{\tilde{\e}}\|  \right] dx
		\\ &+ \int_{0}^x R|\e-\tilde{\e}|  dx.
	\end{aligned}
	\]
	Hence, from Grönwall's inequality, it follows that
	\begin{equation} \label{ineq:deltaTe}
		\begin{aligned}
		| T^{x,t,\e}(\theta) - T^{x,t,\tilde{\e}}(\theta)| \leq \frac{e^{L(\e)(1+2\Delta(\e))|x|} - 1}{1+2\Delta(\e)} \left[\|f_\e-f_{\tilde{\e}}\| + \|g_\e-g_{\tilde{\e}}\| \right] 
		+ \frac{R(e^{L(\e)(1+2\Delta(\e))|x|} - 1)}{L(\e) (1+2\Delta(\e))} |\e - \tilde{\e}| .
		\end{aligned}
	\end{equation}
	
	Now, since $(f_\e,g_\e)$ is a fixed point of the operator $S^\e$ given in the proof of Lemma \ref{lemmahale}, it follows by subtracting $S_1^{\tilde{\e}}(f_{\tilde{\e}},g_{\tilde{\e}})$ from $S_1^\e(f_\e,g_\e)$ that
	\[
	\begin{aligned}
		\|f_\e(t,\theta) - f_{\tilde{\e}}(t,\theta)\| \leq& \int_{-\infty}^{\infty} K e^{-\alpha |x|} L(\e)(1+2\Delta(\e)) | T^{x,t,\e}(\theta) - T^{x,t,\tilde{\e}}(\theta)| dx \\
		&+ \int_{-\infty}^{\infty} K e^{-\alpha|x|}L(\e) \left[\|f_\e - f_{\tilde{\e}}\| + \|g_\e-g_{\tilde{\e}}\| \right]  dx
		\\&+\int_{-\infty}^{\infty} K e^{-\alpha|x|}R|\e-\tilde{\e}|  dx .
	\end{aligned}
	\]
	Thus, considering \eqref{ineq:deltaTe}, if $\e_1$ is chosen sufficiently small so that $2L(\e)(1+2\Delta(\e))<\alpha$ for all $\e \in (0,\e_1]$, it follows that
	\[
	\|f_\e(t,\theta) - f_{\tilde{\e}}(t,\theta)\| \leq \frac{4K L(\e)}{\alpha} \left[\|f_\e - f_{\tilde{\e}}\| + \|g_\e-g_{\tilde{\e}}\| \right] + R |\e-\tilde{\e}|.
	\] 
	A similar argument ensures that
	\[
	\|g_\e(t,\theta) - g_{\tilde{\e}}(t,\theta)\| \leq \frac{4K L(\e)}{\alpha} \left[\|f_\e - f_{\tilde{\e}}\| + \|g_\e-g_{\tilde{\e}}\| \right] + R |\e-\tilde{\e}|.
	\] 
	Therefore, if $\e_1$ is also small enough to ensure that $16KL(\e)<\alpha$ for all $\e \in (0,\e_1]$, it follows that
	\begin{equation} \label{ineq:lipschitz00}
		\|f_\e - f_{\tilde{\e}}\| + \|g_\e-g_{\tilde{\e}}\| \leq 4R |\e -\tilde{\e}|
	\end{equation}
	if $\e,\tilde{\e} \in [a,b]$. The procedure can be repeated for any choice of interval $[a,b]$ with the exact same conditions required for the choice of $\e_1$, yielding generally different constants $R$, but ensuring local Lipschitz continuity nonetheless.
	
	Consider the following properties, where $q \in \{0,\ldots,p\}$:
	\begin{enumerate}[label=Q.\arabic*)]
		\item \label{property 1}There is $N_q \in \mathbb{N}$ and, for each $[a,b] \subset (0,\e_1]$, there is $C_{0,q}>0$ such that 
		\[
		\begin{aligned}
			\left| \left(T^{x,t,\e}\right)^{(q)}(\theta)-  \left(T^{x,t,\tilde{\e}}\right)^{(q)}(\theta) \right| \leq C_{0,q} |\e-\tilde{\e}| e^{N_q L(\e)(1+2\Delta(\e))|x|} 
		\end{aligned}
		\]
		for all $(x,t,\theta) \in \R \times \R \times \R$ and all $\e,\tilde{\e} \in [a,b]$.
		\item \label{property 2} For each $[a,b] \subset (0,\e_1]$, there is $C_{1,q}>0$ such that 
		\[
		\left\|\frac{\partial^q f_\e}{\partial \theta^q}  - \frac{\partial^q f_{\tilde{\e}}}{\partial \theta^q}\right\| + \left\|\frac{\partial^q g_\e}{\partial \theta^q}  - \frac{\partial^q g_{\tilde{\e}}}{\partial \theta^q}\right\| \leq C_{1,q} |\e-\tilde{\e}|
		\]
		for all $\e,\tilde{\e} \in [a,b]$.
	\end{enumerate}
	We will prove by induction that those properties hold for all $q \in \{0,\ldots,p\}$.
	
	Before we proceed to the proof itself, we make some considerations. Once again, the index $i$ will be used to denote any element of the set $\{0,1,2\}$, since the arguments are the same. Let the interval $[a,b] \subset (0,\e_1]$ be fixed. First, since $\zeta_i$ is of class $C^{p+1}$, periodic in its first two entries, and since $f_\e$ and $g_\e$ are bounded for $\e \in [a,b]$, it follows that there are constants $C_\zeta>0$ and $L_\zeta>0$ such that, for all $j \in \{0,1,\ldots,p\}$, the function $D^{(j)} \zeta_i$ satisfies
	\begin{equation} \label{ineq:boundednessDjzeta}
		\|D^{(j)} \zeta_i (\Lambda^{x,t,\e}(\theta))\| \leq C_\zeta
	\end{equation}
	and
	\begin{equation}\label{ineq:LipschitzDjzeta}
		\|D^{(j)} \zeta_i (\Lambda^{x,t,\e}(\theta)) - D^{(j)} \zeta_i (\Lambda^{x,t,\tilde{\e}}(\tilde{\theta}))\| \leq L_\zeta\|\Lambda^{x,t,\e}(\theta) - \Lambda^{x,t,\tilde{\e}}(\tilde{\theta})\|
	\end{equation}
	for all $j \in \{0,\ldots,p\}$, all $(x,t) \in \R \times \R$ and all $(\theta,\e),(\tilde{\theta},\tilde{\e}) \in \R \times [a,b]$. 
	
	Furthermore, considering the definition of $\Lambda^{x,t,\e}$ and \eqref{ineq:lipschitz00}, it follows that, for each $[a,b] \subset (0,\e_1]$, there is $R>0$ such that
	\begin{equation}\label{ineq:deltaLambdathetaepsilon}
		\|\Lambda^{x,t,\e}(\theta) - \Lambda^{x,t,\tilde{\e}}(\tilde{\theta})\| \leq 4R|\e - \tilde{\e}| + (1+2\Delta(\e)) |\theta - \tilde{\theta}|
	\end{equation}
	for all $(x,t) \in \R \times \R$ and all $(\theta,\e),(\tilde{\theta},\tilde{\e}) \in \R \times [a,b]$. Also, observe that for any $j \in \{1,\ldots,p+1\}$,
	\begin{equation} \label{eq:Lambdaj}
	\left(\Lambda^{x,t,\e}\right)^{(j)}(\theta) = \left(0,\delta_{1j},\frac{\partial^j f_\e}{\partial \theta^j}(t+x,\theta),\frac{\partial^j g_\e}{\partial \theta^j}(t+x,\theta),0\right),
	\end{equation}
	where $\delta_{ij}$ is the Kronecker delta. Thus, it follows that
	\begin{equation} \label{boundednesslambda'}
		\left\| \left(\Lambda^{x,t,\e}\right)'(\theta)\right\| \leq 1+ 2\Delta(\e),
	\end{equation}
	for all $(x,t,\theta,\e) \in \R \times \R \times \R \times (0,\e_1]$. Moreover, from Proposition \ref{propositiondeltatheta}, there is, for each $[a,b] \subset (0,e_1]$, a constant $C_\Lambda>0$ such that
	\begin{equation} \label{boundednesslambda(j)}
		\left\| \left(\Lambda^{x,t,\e}\right)^{(j)}(\theta)\right\| \leq C_\Lambda
	\end{equation}
	for all $j \in \{2,\ldots,p+1\}$ and all $(x,t,\theta,\e) \in \R \times \R \times \R \times [a,b]$. Also, from Corollary \ref{corollarydeltatheta}, it follows that
	\begin{equation} \label{lipschitzlambdaj}
		\left\| \left(\Lambda^{x,t,\e}\right)^{(j)}(\theta) - \left(\Lambda^{x,t,\tilde{\e}}\right)^{(j)}(\tilde{\theta})\right\| \leq \left\|\frac{\partial^j f_\e}{\partial \theta^j} - \frac{\partial^j f_{\tilde{\e}}}{\partial \theta^j}\right\| + \left\|\frac{\partial^j g_\e}{\partial \theta^j} - \frac{\partial^j g_{\tilde{\e}}}{\partial \theta^j}\right\| + C_\Lambda |\theta - \tilde{\theta}|
	\end{equation}
	for all $j \in \{1,\ldots,p\}$, all $(x,t) \in \R \times \R$ and all $(\theta,\e),(\tilde{\theta},\tilde{\e}) \in \R \times [a,b]$.
	
	Finally, observe that, from Proposition \ref{propositiondeltatheta}, there are $N_T$ and, for each $[a,b] \subset (0,\e_1]$, a constant $C_T>0$ such that
	\begin{equation}\label{boundednessTj}
		\left| \left(T^{x,t,\e}\right)^{(j)}(\theta)\right| \leq C_T e^{N_T L(\e)(1+2\Delta(\e))|x|}
	\end{equation}
	for all $j \in \{1,\ldots,p+1\}$ and all $(x,t,\theta,\e) \in \R \times \R \times \R \times [a,b]$.

	We start the discussion of the induction argument. Observe that the case $q=0$ follows directly from \eqref{ineq:deltaTe} and \eqref{ineq:lipschitz00}. Let $N \in \{1,\ldots,p\}$ and assume that properties \ref{property 1} and \ref{property 2} are valid for $0\leq q\leq N-1$. We will show that this ensures that such properties also hold for $q=N$.
	
	Proceeding exactly as in the proof of Proposition \ref{propositiondeltatheta}, we obtain the following from Faà di Bruno's formula: 
	\begin{equation}
		\begin{aligned}
			& \left(\zeta_{i}^{x,t,\e} \left(T^{x,t,\e} (\theta)\right)\right)^{(N)} \\&= D^{(N)} \zeta_i\left(\Lambda^{x,t,\e}\left(T^{x,t,\e}(\theta)\right)\right) \cdot \left(\left(\Lambda^{x,t,\e}\right)'\left(T^{x,t,\e}(\theta)\right)\right)^{N} \cdot \left(\left(T^{x,t,\e}\right)'(\theta)\right)^{N} \\
			&+D\zeta_i\left(\Lambda^{x,t,\e}\left(T^{x,t,\e}(\theta)\right)\right) \cdot\left(\left(\Lambda^{x,t,\e}\right)^{(N)}\left(T^{x,t,\e}(\theta)\right)\right) \cdot \left(\left(T^{x,t,\e}\right)'(\theta)\right)^{N} \\
			&+ \sum_{j=2}^{N-1} D^{(j)} \zeta_i\left(\Lambda^{x,t,\e}\left(T^{x,t,\e}(\theta)\right)\right) \cdot B_{N,j} \left(\left(\Lambda^{x,t,\e}\right)'\left(T^{x,t,\e}(\theta)\right),\ldots, \left(\Lambda^{x,t,\e}\right)^{(N-j+1)}\left(T^{x,t,\e}(\theta)\right)\right)  \left(\left(T^{x,t,\e}\right)'(\theta)\right)^{N} \\
			&+ D\zeta_i\left(\Lambda^{x,t,\e}\left(T^{x,t,\e}(\theta)\right)\right) \cdot\left(\left(\Lambda^{x,t,\e}\right)'\left(T^{x,t,\e}(\theta)\right)\right)  \cdot  \left(\left(T^{x,t,\e}\right)^{(N)}(\theta)\right) \\
			&+ \sum_{j=2}^{N-1} \sum_{l=1}^j \left[ D^{(l)}\zeta_i\left(\Lambda^{x,t,\e}\left(T^{x,t,\e}(\theta)\right)\right) \cdot  B_{j,l} \left(\left(\Lambda^{x,t,\e}\right)'\left(T^{x,t,\e}(\theta)\right),\ldots, \left(\Lambda^{x,t,\e}\right)^{(j-l+1)}\left(T^{x,t,\e}(\theta)\right)\right)\right] \\  & \qquad \qquad   B_{N,j}\left(\left(T^{x,t,\e}\right)'(\theta),\ldots,\left(T^{x,t,\e}\right)^{(N-j+1)}(\theta)\right).
		\end{aligned}
	\end{equation}
	For simplicity, we will denote the summands on the right-hand side of this equation by $I$, $II$, $III$, $IV$, and $V$, respectively. If $\e$ is replaced by $\tilde{\e}$, we will denote those terms by $I'$, $II'$, $III'$, $IV'$, and $V'$, respectively

	Considering the hypothesis of induction combined with \eqref{ineq:boundednessDjzeta}, \eqref{ineq:LipschitzDjzeta}, \eqref{lipschitzlambdaj}, \eqref{boundednesslambda'}, \eqref{lipschitzlambdaj} and \eqref{boundednessTj}, it follows that there is $N_I>0$ and, for each $[a,b] \subset (0,\e_1]$, $C_I>0$ such that 
	\begin{equation}\label{ineq:I'}
		\|I-I'\| \leq C_I e^{N_I L(\e)(1+2\Delta(\e))|x|} |\e-\tilde{\e}|.
	\end{equation}
	for all $(x,t,\theta) \in \R \times \R \times \R$ and all $\e, \tilde{\e} \in [a,b]$. Similarly, since $B_{N,j}$ and $B_{j,l}$ are polynomials, there are $N_{III}>0$ and $N_V>0$, and, for each $[a,b] \subset (0,\e_1]$, $C_{III}>0$ and $C_V>0$ such that 
	\begin{equation}\label{ineq:III'}
		\|III-III'\| \leq C_{III} e^{N_{III} L(\e)(1+2\Delta(\e))|x|} |\e-\tilde{\e}|
	\end{equation}
	and
	\begin{equation}\label{ineq:V'}
		\|V-V'\| \leq C_{V} e^{N_V L(\e)(1+2\Delta(\e))|x|} |\e-\tilde{\e}|.
	\end{equation}
	for all $(x,t,\theta) \in \R \times \R \times \R$ and all $\e, \tilde{\e} \in [a,b]$.
	
	Regarding $II$, observe that \eqref{eq:Lambdaj}, \eqref{lipschitzlambdaj}, and the properties of Lipschitz continuity of $\zeta_i$ given in hypothesis (iii) guarantee that
	\begin{equation*}
		\begin{aligned}
		\left\|D\zeta_i\left(\Lambda^{x,t,\e}\left(T^{x,t,\e}(\theta)\right)\right) \cdot\left(\left(\Lambda^{x,t,\e}\right)^{(N)}\left(T^{x,t,\e}(\theta)\right) - \left(\Lambda^{x,t,\tilde{\e}}\right)^{(N)}\left(T^{x,t,\tilde{\e}}(\theta)\right)\right)\right\| \leq& L(\e) \left[\left\| \frac{\partial^{N} f_\e}{\partial \theta^{N}} - \frac{\partial^{N} f_{\tilde{\e}}}{\partial \theta^{N}}\right\| + \left\|\frac{\partial^{N} g_\e}{\partial \theta^{N}}  - \frac{\partial^{N} g_{\tilde{\e}}}{\partial \theta^{N}}\right\|\right] \\
		+& L(\e) C_\Lambda \left| T^{x,t,\e}(\theta) - T^{x,t,\tilde{\e}}(\theta)\right|.
		\end{aligned}
	\end{equation*}
	Thus, the hypothesis of induction, combined with \eqref{ineq:LipschitzDjzeta}, \eqref{lipschitzlambdaj}, \eqref{boundednesslambda'}, \eqref{lipschitzlambdaj} and \eqref{boundednessTj}, ensures that there is $N_{II}>0$ and, for each $[a,b] \subset (0,\e_1]$, $C_{II}>0$ such that 
	\begin{equation}\label{ineq:II'}
		\|II-II'\| \leq C_{II} e^{N_{II} L(\e)(1+2\Delta(\e))|x|} |\e-\tilde{\e}| + L(\e) \left[\left\| \frac{\partial^{N} f_\e}{\partial \theta^{N}} - \frac{\partial^{N} f_{\tilde{\e}}}{\partial \theta^{N}}\right\| + \left\|\frac{\partial^{N} g_\e}{\partial \theta^{N}}  - \frac{\partial^{N} g_{\tilde{\e}}}{\partial \theta^{N}}\right\|\right] e^{N_{II} L(\e)(1+2\Delta(\e))|x|}.
	\end{equation}
	for all $(x,t,\theta) \in \R \times \R \times \R$ and all $\e, \tilde{\e} \in [a,b]$.
	 
	 Finally, a similar argument ensures that there is $N_{IV} \in \mathbb{N}$ and, for each $[a,b] \subset (0,\e_1]$, $C_{IV}>0$ such that 
	 \begin{equation}\label{ineq:IV'}
	 	\|IV - IV'\| \leq C_{IV} e^{N_{IV} L(\e)(1+2\Delta(\e))|x|} |\e-\tilde{\e}| + L(\e)(1+2\Delta(\e)) \left|\left(T^{x,t,\e}\right)^{(N)}(\theta) - \left(T^{x,t,\tilde{\e}}\right)^{(N)}(\theta)\right| 
	 \end{equation}
	for all $(x,t,\theta) \in \R \times \R \times \R$ and all $\e, \tilde{\e} \in [a,b]$.

	By definition of $T^{x,t,\e}$, it follows that
	\begin{equation*} 
		\begin{aligned}
			\frac{\partial^{N}}{\partial \theta^{N}}  \left(\frac{\partial T_{f_\e,g_\e}}{\partial x}\right) (t+x,t,\theta,\e) =  \left(\zeta_{0}^{x,t,\e}\left(T^{x,t,\e}(\theta)\right)\right)^{(N)}.
		\end{aligned}
	\end{equation*}
	Thus, considering inequalities \eqref{ineq:I'} to \eqref{ineq:IV'}, it follows that there is $\tilde{N}_T \in \mathbb{N}$ and, for each $[a,b] \subset (0,\e_1]$, $\tilde{C}_T>0$ such that	
	\begin{equation*}
		\begin{aligned}
			\left|\left(T^{x,t,\e}\right)^{(N)}(\theta) - \left(T^{x,t,\tilde{\e}}\right)^{(N)}(\theta)\right| &\leq \int_0^x L(\e) (1+2\Delta(\e)) \left|\left(T^{\tau,t,\e}\right)^{(N)}(\theta) - \left(T^{\tau,t,\tilde{\e}}\right)^{(N)}(\theta)\right| d\tau \\
			&+\frac{1}{1+2\Delta(\e)} \left[\left\| \frac{\partial^{N} f_\e}{\partial \theta^{N}} - \frac{\partial^{N} f_{\tilde{\e}}}{\partial \theta^{N}}\right\| + \left\|\frac{\partial^{N} g_\e}{\partial \theta^{N}}  - \frac{\partial^{N} g_{\tilde{\e}}}{\partial \theta^{N}}\right\|\right] e^{\tilde{N}_T L(\e)(1+2\Delta(\e))|x|} \\
			&+ \tilde{C}_T |\e - \tilde{\e}| e^{\tilde{N}_T L(\e)(1+2\Delta(\e))|x|}
		\end{aligned}
	\end{equation*}
	for all $(x,t,\theta) \in \R \times \R \times \R$ and all $\e, \tilde{\e} \in [a,b]$. From Grönwall's inequality, it follows that
	\begin{equation} \label{ineq:TN}
		\begin{aligned}
			\left|\left(T^{x,t,\e}\right)^{(N)}(\theta) - \left(T^{x,t,\tilde{\e}}\right)^{(N)}(\theta)\right| \leq
			\left[\tilde{C}_T|\e-\tilde{\e}| + \frac{1}{1+2\Delta(\e)} \left[\left\| \frac{\partial^{N} f_\e}{\partial \theta^{N}} - \frac{\partial^{N} f_{\tilde{\e}}}{\partial \theta^{N}}\right\| + \left\|\frac{\partial^{N} g_\e}{\partial \theta^{N}}  - \frac{\partial^{N} g_{\tilde{\e}}}{\partial \theta^{N}}\right\|\right] \right] e^{(\tilde{N}_T+1) L(\e)(1+2\Delta(\e))|x|}
		\end{aligned}
	\end{equation}
	for all $(x,t,\theta) \in \R \times \R \times \R$ and all $\e, \tilde{\e} \in [a,b]$.
	
	Let us prove that property \ref{property 2} holds for $q=N$. Observe that
	\begin{equation*}
		\begin{aligned}
			\frac{\partial^N f_\e}{\partial \theta^N} (t,\theta) = \int_{-\infty}^{\infty} J_1(x) \left(\zeta_{1}^{x,t,\e} \left(T^{x,t,\e} (\theta)\right)\right)^{(N)} dx.
		\end{aligned}
	\end{equation*}
	Thus, proceeding just as above, we obtain, for each $[a,b] \subset (0,\e_1]$, a constant $\tilde{C}>0$ such that
	\begin{equation*} 
		\begin{aligned}
			\left\|\frac{\partial^N f_\e}{\partial \theta^N} - \frac{\partial^N f_{\tilde{\e}}}{\partial \theta^N}\right\| &\leq \int_{-\infty}^\infty K e^{-\alpha|x|} L(\e) (1+2\Delta(\e)) \left|\left(T^{\tau,t,\e}\right)^{(N)}(\theta) - \left(T^{\tau,t,\tilde{\e}}\right)^{(N)}(\theta)\right| d\tau \\
			&+\int_{-\infty}^\infty K e^{-\alpha |x|} L(\e) \left[\left\| \frac{\partial^{N} f_\e}{\partial \theta^{N}} - \frac{\partial^{N} f_{\tilde{\e}}}{\partial \theta^{N}}\right\| + \left\|\frac{\partial^{N} g_\e}{\partial \theta^{N}}  - \frac{\partial^{N} g_{\tilde{\e}}}{\partial \theta^{N}}\right\|\right] e^{N_{II} L(\e)(1+2\Delta(\e))|x|} \\
			&+ \int_{-\infty}^\infty K e^{-\alpha|x|} \tilde{C} |\e - \tilde{\e}| e^{\tilde{N}_T L(\e)(1+2\Delta(\e))|x|}.
		\end{aligned}
	\end{equation*}
	Define
	\[
	C_{1,N} := \frac{16K \tilde{C}}{\alpha}, \qquad \tilde{N}:= \max \{\tilde{N}_T+1, N_{II}\}.
	\] 
	If $\e_1$ is sufficiently small as to ensure that $2\tilde{N}L(\e)(1+2\Delta(\e)) \leq \alpha$ for all $\e \in (0,\e_1]$, then it follows by integrating and considering \eqref{ineq:TN} that
	\begin{equation*} 
		\begin{aligned}
			\left\|\frac{\partial^N f_\e}{\partial \theta^N} - \frac{\partial^N f_{\tilde{\e}}}{\partial \theta^N}\right\|  \leq \frac{8KL(\e)}{\alpha} \left[\left\| \frac{\partial^{N} f_\e}{\partial \theta^{N}} - \frac{\partial^{N} f_{\tilde{\e}}}{\partial \theta^{N}}\right\| + \left\|\frac{\partial^{N} g_\e}{\partial \theta^{N}}  - \frac{\partial^{N} g_{\tilde{\e}}}{\partial \theta^{N}}\right\|\right] + \frac{4K\tilde{C}}{\alpha} |\e - \tilde{\e}|.
		\end{aligned}
	\end{equation*}
	Proceeding similarly for $g_\e$, we obtain
	\begin{equation*} 
		\begin{aligned}
			\left\|\frac{\partial^N g_\e}{\partial \theta^N} - \frac{\partial^N g_{\tilde{\e}}}{\partial \theta^N}\right\|  \leq \frac{8KL(\e)}{\alpha} \left[\left\| \frac{\partial^{N} f_\e}{\partial \theta^{N}} - \frac{\partial^{N} f_{\tilde{\e}}}{\partial \theta^{N}}\right\| + \left\|\frac{\partial^{N} g_\e}{\partial \theta^{N}}  - \frac{\partial^{N} g_{\tilde{\e}}}{\partial \theta^{N}}\right\|\right] + \frac{4K\tilde{C}}{\alpha} |\e - \tilde{\e}|.
		\end{aligned}
	\end{equation*}
	Hence, if $\e_1$ is also chosen small enough to ensure that $32L(\e)(1+2\Delta(\e))<\alpha$, it follows that
	\[
		\left\| \frac{\partial^{N} f_\e}{\partial \theta^{N}} - \frac{\partial^{N} f_{\tilde{\e}}}{\partial \theta^{N}}\right\| + \left\|\frac{\partial^{N} g_\e}{\partial \theta^{N}}  - \frac{\partial^{N} g_{\tilde{\e}}}{\partial \theta^{N}}\right\| \leq \frac{16K \tilde{C}}{\alpha} |\e - \tilde{\e}| = C_{1,N} |\e - \tilde{\e}|,
	\]
	proving that property \ref{property 2} holds for $q=N$. Thus, the validity of property \ref{property 1} for this value of $q$ follows immediately considering \eqref{ineq:TN}. 
	
	Therefore, we have proved by induction that properties \ref{property 1} and \ref{property 2} are valid for $q \in \{0,\ldots,p\}$. This concludes the proof of the Lemma, because property \ref{property 2} ensures local Lipschitz continuity of the functions considered.

\section*{Data availability}
All data generated or analysed during this study are included in this published article

\section*{Conflict of interest}
On behalf of all authors, the corresponding author states that there is no conflict of interest.
\bibliographystyle{abbrv}
\bibliography{references}

\end{document}